\documentclass[a4paper]{article} 
\usepackage[left=2.3cm,right=2.3cm,top=2.9cm]{geometry}
\usepackage{authblk}
\usepackage{CJKutf8}
\usepackage{yfonts}
\usepackage{lipsum}
\usepackage[utf8]{inputenc}
\title{On the localization regime of high-dimensional directed polymers in time-correlated random field}
\author{Jiaming Chen\thanks{chen.jiaming@cims.nyu.edu}}
\affil{Courant Institute of Mathematical Sciences, New York University}
\date{\today}

\usepackage{graphicx}
\usepackage{amssymb}
\usepackage{amsmath}
\usepackage{amsthm}
\usepackage{tikz}
\usepackage{todonotes}
\usepackage{enumitem}
\usepackage{verbatim}

\usepackage{centernot}
\usepackage{mathtools}
\usepackage{ stmaryrd }

\usepackage{hyperref}
\hypersetup{allcolors=black,
pdftitle={Directed polymer in time-correlated field},
pdfpagemode=FullScreen}
\numberwithin{equation}{section}
\usepackage{bbm}

\usepackage{braket}
\usepackage{amsmath,amsthm,amssymb}
\usepackage{physics}
\usepackage{mathtools}
\usepackage{bm}
\usepackage{esvect}
\usepackage[english]{babel}
\usepackage{extarrows} 
\usepackage{mathrsfs}
\usepackage{esint}

\usepackage{setspace}
\linespread{1.1}

\usepackage{tocloft}
\usepackage{titlesec}
\titleformat{\subsection}[runin]
  {\normalfont\large\bfseries}{\thesubsection}{1em}{}



\usepackage{amsmath}

\numberwithin{equation}{section}
\usepackage{bbm}

\usepackage{braket}
\usepackage{amsmath,amsthm,amssymb}
\usepackage{physics}
\usepackage{mathtools}
\usepackage{bm}
\usepackage{esvect}
\usepackage[english]{babel}

\newtheorem{theorem}{Theorem}[section]
\newtheorem{lemma}[theorem]{Lemma}

\theoremstyle{definition}
\newtheorem{definition}[theorem]{Definition}

\theoremstyle{remark}

\usepackage[english]{babel}

\DeclareMathOperator*{\esssup}{ess\,sup}

\newcommand\normx[1]{\lVert#1\rVert}
\newcommand\normy[1]{\big\lVert#1\big\rVert}

\makeatletter
\renewenvironment{proof}[1][\proofname]{%
  \par\pushQED{\qed}\normalfont%
  \topsep6\p@\@plus6\p@\relax
  \trivlist\item[\hskip\labelsep\bfseries#1\@addpunct{.}]%
  \ignorespaces
}{%
  \popQED\endtrivlist\@endpefalse
}
\makeatother
\begin{document}
\maketitle

\begin{abstract}
    This paper describes directed polymer on general time-correlated random field. Law of large numbers, existence and smoothness of limiting free energies are proved at all temperature. We also display the delocalized-localized transition, via separating techniques for entanglement of the random field.
\end{abstract}

\begin{spacing}{0} 
    \tableofcontents
\end{spacing}


\section{Introduction}
    Take $N$ particles in a fluid and assume their interactions are connected by harmonic strings. With external forces and thermal fluctuation, the shape of these particles should be understood as a random configuration. This is a very primitive model for a \textit{polymer chain} consisting of $N$ particles wafting in water \cite{Smith/Bertola}. Admitting the effect of the external water molecules that randomly kick the particles making up our string, we furthermore allow that these kicks occur randomly and could correlate in both space and time.\par
    In the framework of statistical mechanics \cite{Huse/Henley}, the question we address here is: How does the stochastic impurities affect the macroscopic shape of the polymer chain? In this work, we try to answer this question in discrete models and in the perspective of the localization regime. The classical tréatise usually suppresses the entanglement and interactions among particles; But we find this compromising too simplified and we novelly take into account the space and time-correlations among the underlying random field.\par
    As in classical modeling \cite{Comets2}, we shall represent the polymer chain as a $N$-particle graph $\{(j,x_j)\}_{j=1}^N$ in $\mathbb{N}\times\mathbb{Z}^d$ so that the polymer configuration lives in $(d+1)$-dimensional lattice, and stretches dependently in time direction. Each point $(j,x_j)\in\mathbb{N}\times\mathbb{Z}^d$ on the graph stands for the position of $j$th particle in the chain. And we also assume that the transversal motion $(S_j)_{j=1}^N$ performs a finite-range simple random walk for consecutive particles in the chain taking all possible configurations at a fixed distance one from another.\par
    The complexity of the underlying random field is nonetheless a difficult topic where classical martingale structure \cite{Albeverio/Zhou} is destroyed and has been avoided by most literature on directed polymers. Indeed, pioneering work on the subtle non-i.i.d. environment include \cite{Medina/Hwa/Kardar/Zhang} where the nonlinear diffusion with correlated space-noise is discussed in the context of random interface growth; And in \cite{Rovira/Tindel} the authors introduced the Brownian polymer in space-correlated Gaussian field. Furthermore, the superdiffusivity is investigated in \cite{Bezerra/Tindel/Viens,Lacoin} in the same model. That said, there has not been much discussion on the challenging time-correlated scenario.\par
    Following previous work \cite{Rang/Song/Wang} on the scaling limits of partition function with omitted space-correlation \cite{Magdziarz/Szczotka/Zebrowski} and specific Gaussian dependence in time direction, we novelly show the existence and transition between localization and delocalization regimes where the polymer lives in transient dimensions, in the presence of both more general space and time-correlation. Heuristically, striking results characterizing the delocalized-localized transition under i.i.d. random field have been given in \cite{Carmona/Hu,Comets/Shiga/Yoshida}. Intuitively, delocalization implies that the polymer chain behaves like $(S_n)_{n\geq1}$ in transient dimensions, wheres the localization means that the polymer is extremely affected by its favorable medium and thus concentrates in just a few coordinates.\par
    Looking beyond, we should also remark that polymers can be defined with long-range random walks. See for instance \cite{Miura/Tawara/Tsuchida} where the law of increments belongs to the domain of attraction of an $\alpha$-stable law; And \cite{Viveros} where they discussed phase transitions when the random walk has very heavy tails. I also think it is possible to extend my techniques to long-range model of polymer chains. But one should pay particular notice to the specific cone-mixing structure in Section \ref{sec: auxiliary fields}. Perhaps a superposition of countably many cones suffices. Nonetheless, the picture is still obscure, and I encourage researchers (including myself) to future studies. \par\noindent
    \textbf{Acknowledgment}. The author wishes to thank his PhD advisor Prof. Dr. Alejandro Ramírez at NYU Shanghai for pointing out the notion of time-correlation, and for reading this manuscript as well as raising valuable comments.

\section{Directed polymer}
    A random walk on $\mathbb{Z}^d$, $d\geq1$ is a sequence $(S_n)_{n\geq0}$ starting from $S_0\in\mathbb{Z}^d$ and moves over the lattice in discrete time. Letting $\mathcal{P}(\mathbb{Z}^d)$ denote the canonical product Borel $\sigma$-algebra, we can alternatively describe its trajectories in the path space $((\mathbb{Z}^d)^{\mathbb{N}},\mathcal{P}(\mathbb{Z}^d)^{\otimes\mathbb{N}})$ via a probability measure $P^S_x$ whenever $S_0=x$.\par
    Throughout this paper, $(S_n)_{n\geq0}$ represents a random walk bounded i.i.d. increments, i.e.~$\normx{S_1}_1<\infty$. From now on, we consider only transient $(S_n)_{n\geq0}$, i.e.~$d\geq3$. Independently, we introduce a time-correlated Markovian random field $\omega=(\omega_{n,z}:\,n\in\mathbb{N},z\in\mathbb{Z}^d)$ with its law $\mathbb{P}$ and product $\sigma$-algebra $\mathscr{F}_\Omega$ on $\mathbb{N}\times\mathbb{Z}^d$ satisfying
    \begin{equation}\label{eqn: finit exponential moment of omega}            
        \mathbb{E}\big[\exp(\beta\omega_{n,z})\big]<\infty,\qquad\forall~\beta\in\mathbb{R},\quad n\in\mathbb{N}\quad\text{and}\quad z\in\mathbb{Z}^d.
    \end{equation}
    Here we adopt the following convention that $E^S_x\coloneqq E_{P^S_x}$ and $\mathbb{E}\coloneqq E_{\mathbb{P}}$. And $\beta$ refers to \textit{inverse temperature} in physics literature and will more or less reflect the level of disorder in our subsequent setting. And without loss of generality \cite[Remark 1.1]{Comets/Yoshida}, we will restrict to the case $\beta\geq0$.\par  
    Before stating the main model of this paper, we need first describe has exactly the time-correlation is defined. For this end, we need to define the distance between sets in the space-time lattice. For instance with $A,B\subseteq\mathbb{N}\times\mathbb{Z}^d$, we let $d_1(A,B)\coloneqq\inf\{\abs{x-y}_1+\abs{m-k}:\,(m,x)\in A,\,(k,y)\in B\}$ stand for the $\ell_1$-distance between subsets $A$ and $B$ of $\mathbb{N}\times\mathbb{Z}^d$. By saying $\omega$ Markovian, we mean 
    \[
            \mathbb{P}\big((\omega_{n,z})_{(n,z)\in V}\in\vdot\big|\mathscr{F}_{V^c}\big) = \mathbb{P}\big((\omega_{n,z})_{(n,z)\in V}\in\vdot\big|\mathscr{F}_{\partial V}\big),\qquad\forall~V\subseteq\mathbb{N}\times\mathbb{Z}^d,\quad\mathbb{P}\text{-a.s.,}
    \]
    where we follow the convention $\mathscr{F}_\Lambda\coloneqq\sigma(\omega_{n,z}:\,(n,z)\in\Lambda)$ for any $\Lambda\subseteq\mathbb{N}\times\mathbb{Z}^d$. The exact mixing nature of the time-correlated field is described as follows.

    \begin{figure}
    \centering
    \includegraphics[width=0.7\textwidth]{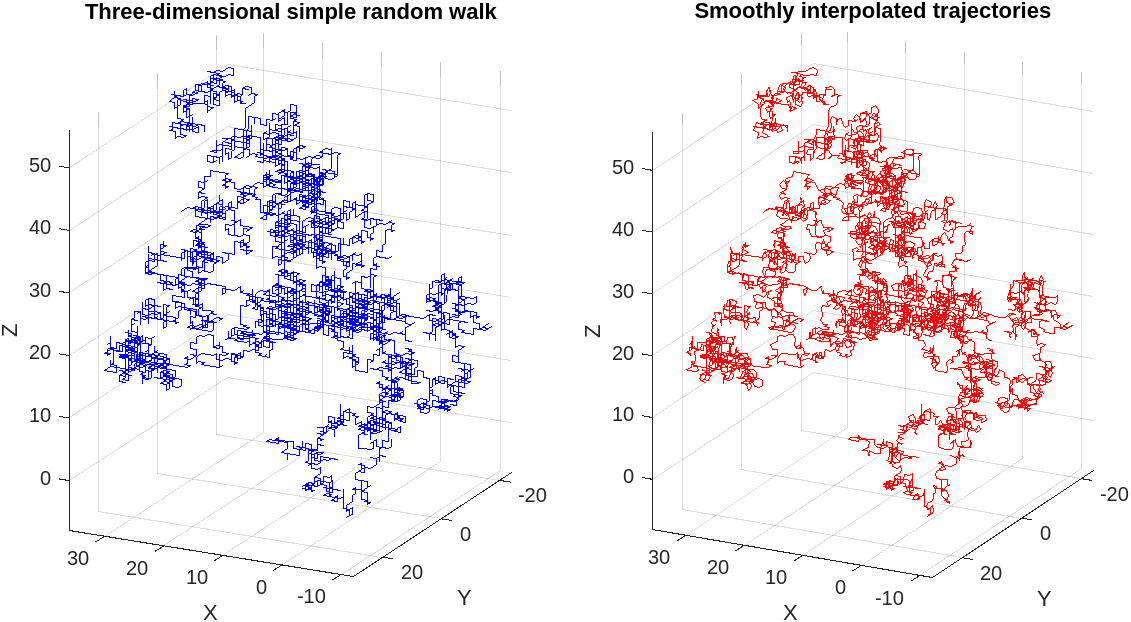}
    \caption{An illustration of simple random in transient dimensions.}
    \label{SRW}
    \end{figure}
    
    \begin{definition}\label{def: time-correlated}
        Let $C$ and $g$ be given and fixed positive real numbers. A Markovian field $\omega$ with law $(\mathbb{P},\mathscr{F}_\Omega)$ on $\mathbb{N}\times\mathbb{Z}^d$ is said to satisfy the time-correlated condition $\textbf{(TC)}_{C,g}$ if for any finite subsets $\Delta\subseteq V\subseteq \mathbb{N}\times\mathbb{Z}^d$ with $d_1(\Delta,V^c)>1$ and $A\subseteq V^c$,
    \[
        \frac{d\mathbb{P}((\omega_{n,z})_{(n,z)\in\Delta}\in\vdot|\eta)}{d\mathbb{P}((\omega_{n,z})_{(n,z)\in\Delta}\in\vdot|\eta^\prime)} \leq \exp\bigg(C\sum_{(m,x)\in\partial\Delta,(k,y)\in\partial A}e^{-g\abs{x-y}_1-g\abs{m-k}}\bigg)
    \]
    simultaneously for all pairs of configurations $\eta,\eta^\prime\in\Omega\coloneqq([0,\infty))^{\mathbb{N}\times\mathbb{Z}^d}$ which agree on $V^c\backslash A$, $\mathbb{P}$-a.s. Notice that here we follow the convention
    \[
        \mathbb{P}\big((\omega_{n,z})_{(n,z)\in\Delta}\in\vdot\big|\eta\big) = \mathbb{P}\big((\omega_{n,z})_{(n,z)\in\Delta}\in\vdot\big|\mathscr{F}_{V^c}\big)~\text{given that}~(\omega_{n,z})_{(n,z)\in V^c}=\eta.
    \]
    \end{definition}
    Apart from this time-correlated condition $\textbf{(TC)}_{C,g}$, another type of correlation is incorporated in our discussion as well. The following correlation originates from X. Guo \cite{Guo} in the context of describing the limiting velocity of random walks in random environments, and models a large class of disordered systems quite naturally. To this recognition, we have the following definition.
    \begin{definition}\label{def: Guo's time-correlated}
        Let $C$ and $g$ be given and fixed positive constants. A Markovian field $\omega$ with law $(\mathbb{P},\mathscr{F}_\Omega)$ on $\mathbb{N}\times\mathbb{Z}^d$ is said to satisfy the Guo's time-correlated condition $\textbf{(TCG)}_{C,g}$ if for any finite subsets $\Delta\subseteq V\subseteq \mathbb{N}\times\mathbb{Z}^d$ with $d_1(\Delta,V^c)>1$ and $A\subseteq V^c$,
    \[
        \frac{d\mathbb{P}((\omega_{n,z})_{(n,z)\in\Delta}\in\vdot|\eta)}{d\mathbb{P}((\omega_{n,z})_{(n,z)\in\Delta}\in\vdot|\eta^\prime)} \leq \exp\bigg(C\sum_{(m,x)\in\Delta,(k,y)\in A}e^{-g\abs{x-y}_1-g\abs{m-k}}\bigg)
    \]
    simultaneously for all pairs of configurations $\eta,\eta^\prime\in\Omega$ which agree on $V^c\backslash A$, $\mathbb{P}$-a.s.
    \end{definition}
    Intuitively $\textbf{(TCG)}_{C,g}$ is an asymptotic more general assumption. Strictly speaking, the former is not implied by condition $\textbf{(TC)}_{C,g}$, but in asymptotic terms Guo's time-correlated condition is harder to work with. Our \textbf{standing assumption} on the random field $\omega$ is that it is Markovian and satisfies either the time-correlated condition in Definition \ref{def: time-correlated} or the Guo's condition in Definition \ref{def: Guo's time-correlated}.\par
    Let us now define the $\mathbb{Z}^d$-valued polymer process on $\mathbb{N}$ via its finite-dimensional marginals $P^{\beta,\omega}_N$ at each time point $N\in\mathbb{N}$. This polymer measure $P^{\beta,\omega}_N$ is a probability measure on the path space $((\mathbb{Z}^d)^N,\mathcal{P}(\mathbb{Z}^d)^{\otimes N})$ characterized by its Radon--Nikodým derivative with respect to $P^S_0$ in finite time by
    \begin{equation}\label{eqn: Polymer measure, R-N derivative}
        P^{\beta,\omega}_N(dS) = \frac{1}{Z^{\beta,\omega}_N} \exp\big( \beta\sum_{n=1}^N \omega_{n,S_n} \big) P^S_0(dS),\qquad\forall~S\in\mathcal{S}_N,
    \end{equation}
    where $\mathcal{S}_N$ denotes all possible trajectories of $(S_n)_{n=0}^N$. The normalizing factor $Z^{\beta,\omega}_N$ in (\ref{eqn: Polymer measure, R-N derivative}) is called the \textit{partition function} which ensures $P^{\beta,\omega}_N$ a genuine probability measure up to time point $N$. Our first result, via creating an auxiliary random field to separate the entangled information from the time-correlated field $\omega$, deals with the limiting behavior of the moving average process $(N^{-1}\sum_{k=1}^N \omega_{k,S_k})_{N\geq1}$ in the sense of law of large numbers.
    \begin{theorem}\label{thm: law of large numbers}
        \normalfont
        Under either the time-correlated condition $\textbf{(TC)}_{C,g}$ or $\textbf{(TCG)}_{C,g}$ and for each fixed $\beta\geq0$, there exists a deterministic limit $\ell\in\mathbb{R}$ such that
        \[
            \lim_{N\to\infty}\frac{1}{N}\sum_{k=1}^N \omega_{k,S_k}=\ell,\qquad\mathbb{P}\otimes P^S_0\text{-a.s.}
        \]
        We call this limiting identity the law of large numbers for the moving average process $(N^{-1}\sum_{k=1}^N \omega_{k,S_k})_{N\geq1}$.
    \end{theorem}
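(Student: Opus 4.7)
The plan is to establish the law of large numbers by constructing an auxiliary field $\tilde{\omega}$ that decouples, along the random walk trajectory, the entangled information in $\omega$, then running a direct strong-law argument on the auxiliary system, and finally transferring the conclusion back to $\omega$ through the quantitative bounds in Definition \ref{def: time-correlated} or Definition \ref{def: Guo's time-correlated}.

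First I would work on the product space $(\Omega \times (\mathbb{Z}^d)^{\mathbb{N}}, \mathbb{P} \otimes P^S_0)$ and form the environment viewed from the walker $\omega^*_n := \omega_{n, S_n}$, writing $\omega^*_n = f \circ \theta^{n-1}$ for the shift $\theta(\omega, S)_{n, z} := (\omega_{n+1, z + S_1}, S_{n+1} - S_1)$ and $f(\omega, S) := \omega_{1, S_1}$. Together with space-time translation invariance of $\mathbb{P}$ and the i.i.d.\ increments of $(S_n)$, this makes $\theta$ measure-preserving. The claim then reduces to the ergodicity of $\theta$, after which Birkhoff's theorem identifies the deterministic limit as $\ell = \mathbb{E} \otimes E^S_0[\omega_{1, S_1}]$.

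To prove ergodicity I would introduce $\tilde{\omega}$ by a hierarchical resampling scheme on a family of growing space-time blocks $B_j$: inside each $B_j$ resample $\omega$ conditionally on its values outside a slightly enlarged block $B_j^+$. Under $\textbf{(TC)}_{C,g}$ the resulting total-variation distance between $\mathcal{L}(\omega|_{B_j})$ and $\mathcal{L}(\tilde{\omega}|_{B_j})$ is at most $C \sum e^{-g|x-y|_1 - g|m-k|}$ summed over the finite boundary $\partial B_j^+ \times \partial B_j$, and since $(S_n)$ is transient for $d \geq 3$ these boundary sums remain finite along a typical trajectory. After refreshing, distant blocks become independent, so $(\tilde{\omega}_{k, S_k})_k$ is essentially an $m$-dependent sequence to which a standard strong law applies.

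The main obstacle I expect lies in handling $\textbf{(TCG)}_{C,g}$: its exponential bound is summed over all pairs in $\Delta \times A$ rather than only boundary pairs, so a single-scale decoupling would leave an error growing with $|\Delta|$. I anticipate this requires a multi-scale renormalization in the spirit of the cone-mixing construction foreshadowed in the introduction, iterating the resampling at dyadic scales while repeatedly invoking transience of $(S_n)$ to keep cumulative errors summable. Once the auxiliary law of large numbers is in place, the approximation error $N^{-1} \sum_{k=1}^N (\omega_{k, S_k} - \tilde{\omega}_{k, S_k}) \to 0$ almost surely should follow from (\ref{eqn: finit exponential moment of omega}), a high-moment Chebyshev bound, and Borel--Cantelli along a rapidly growing subsequence, with interpolation to the full sequence by monotonicity of partial sums.
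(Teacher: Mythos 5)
Your high-level plan---decouple the entangled field, prove a strong law for the decoupled system, and transfer back---is the same spirit as the paper, but the execution you sketch has several genuine gaps; the paper's mechanism is quite different from what you describe.

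First, your reduction to ergodicity of the point-of-view-of-the-particle shift $\theta$ and an appeal to Birkhoff is not what the paper does, and the ergodicity step is exactly where the difficulty lives: with time-correlation, stationarity of the POVP process is easy but ergodicity is not, and you give no argument that closes it. Second, the ``hierarchical resampling on growing blocks'' does not deliver what you claim. Replacing $\omega$ inside $B_j$ by a sample from its conditional law given the exterior does not make distant blocks independent: resampling reproduces the same conditional law, so the cross-block dependence survives unchanged. You would need a genuine coupling that, on a high-probability event, makes the resampled block-increments actually equal to i.i.d.\ copies; that is precisely the Berbee splitting step, which you do not invoke and which the total-variation estimate alone does not give you. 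Relatedly, your final transfer step is problematic: the error $N^{-1}\sum_k(\omega_{k,S_k}-\tilde\omega_{k,S_k})$ is controlled only if the two fields \emph{agree} along the walk on most sites, and closeness of laws in total variation says nothing about pathwise agreement without an explicit coupling. Third, the invocation of transience to make the boundary sums $\sum_{\partial B_j^+\times\partial B_j} e^{-g|\cdot|}$ finite is a misattribution: those sums are governed by the buffer width between $B_j$ and $B_j^+$, a purely geometric quantity, and have nothing to do with transience of $(S_n)$. Transience ($d\ge3$) plays no role in the paper's proof of Theorem~\ref{thm: law of large numbers}; it is needed later for the $L^2$-martingale and free-energy arguments.

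What the paper actually does (Section~\ref{sec: auxiliary fields} and Lemma~\ref{lem: law of large numbers}): it introduces an independent coin process $\epsilon$ with law $Q$ on $\{-1,0,1\}^{\mathbb N}$, forms the auxiliary field $\eta$ with $E_Q[\eta_{n,z}]=\omega_{n,z}$, and defines regeneration-type stopping times $\tau^{(L)}_n$ triggered by runs of $L$ consecutive $+1$'s in $\epsilon$. Lemma~\ref{lem: tau mixing inequality} is the key cone-mixing estimate: using the space-time cone geometry and either $\textbf{(TC)}_{C,g}$ or $\textbf{(TCG)}_{C,g}$ uniformly, it shows the conditional law of each inter-regeneration block is within $\psi_L=\exp(e^{-gtL})-1$ of a fixed product law in total variation. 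Then Lemma~\ref{lem: A1} (Berbee splitting) upgrades this to an almost-sure coupling with an actual i.i.d.\ sequence plus a Bernoulli$(\psi_L)$ controlled defect, after which martingale and Kronecker arguments give the strong law for $\eta$. Finally, the passage from $\eta$ to $\omega$ uses the explicit identity $E_Q[\eta_{k,S_k}]=\omega_{k,S_k}$, crude exponential-moment bounds via Lemma~\ref{lem: kappa-bound for a cone}, and dominated convergence on a nested family of good events---not a high-moment Chebyshev subsequence trick. Your proposal would need to be reworked around an explicit coupling (rather than distributional closeness) to close these gaps.
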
   
    Going beyond the law of large numbers, we investigate the statistical mechanics properties of the polymer process (\ref{eqn: Polymer measure, R-N derivative}). The monotonicity and smoothness of free energies has been shown in \cite{Comets/Yoshida} in i.i.d. environments. Our first result, via creating an auxiliary random field, confirms the same regularity in the more delicate time-correlated structure.
    \begin{theorem}\label{thm: quenched and annealed free energies exist}
        \normalfont
        Under either the time-correlated condition $\textbf{(TC)}_{C,g}$ or $\textbf{(TCG)}_{C,g}$, the following limits exist $\mathbb{P}$-a.s. and are of constant value. Namely,
        \[
            \rho(\beta)\coloneqq\lim_{N\to\infty}\frac{1}{N}\log Z^{\beta,\omega}_N\qquad\text{and}\qquad \lambda(\beta)\coloneqq\lim_{N\to\infty}\frac{1}{N}\log\mathbb{E}[Z^{\beta,\omega}_N],\quad\mathbb{P}\text{-a.s.}
        \]
        In particular, the annealed free energy $\beta\mapsto\lambda(\beta)$ is differentiable on $[0,\infty)$. Furthermore, $\rho(\beta)\leq\lambda(\beta)$ for any $\beta\geq0$. And as a function, $\beta\mapsto\rho(\vdot)-\lambda(\vdot)$ is continuous and non-increasing.
    \end{theorem}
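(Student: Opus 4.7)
My plan is to set up approximate subadditivity for the sequences $\log Z_N^{\beta,\omega}$ and $\log\mathbb{E}[Z_N^{\beta,\omega}]$ in the time index $N$, with error terms of size $o(N)$ produced by the exponential decay in $(\textbf{TC})_{C,g}$ or $(\textbf{TCG})_{C,g}$. The key device is the auxiliary independent random field foreshadowed in the introduction and to be constructed in Section \ref{sec: auxiliary fields}: splitting the polymer on $\{1,\dots,N+M\}$ at time $N$ via the Markov property of the walk produces a product of two partition functions on disjoint time windows, and coupling each window to an independent replica of $\omega$ converts it into a genuine product modulo a multiplicative error $\exp(R(N,M))$ controlled by the mixing kernel $\sum_{(m,x),(k,y)} e^{-g\abs{x-y}_1 - g\abs{m-k}}$, which is $o(N+M)$ after inserting a thin buffer between the two windows. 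A Fekete-type lemma for almost-subadditive sequences then produces $\lambda(\beta)$, while the pathwise inequality $\log Z_{N+M}^{\beta,\omega} \leq \log Z_N^{\beta,\omega} + \log Z_M^{\beta,\theta_N\omega} + r(N,M)$ with $r(N,M)/(N+M) \to 0$, combined with Kingman's subadditive ergodic theorem applied to the stationary time-shifted field (whose time-ergodicity itself follows from the mixing coupling), yields $\rho(\beta)$ $\mathbb{P}$-a.s.\ with deterministic value.

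Differentiability of $\lambda$ will come from the observation that $\mathbb{E}[Z_N^{\beta,\omega}]$ is the moment generating function of $\sum_{k=1}^N \omega_{k,S_k}$ under the joint law $P_0^S \otimes \mathbb{P}$, so by (\ref{eqn: finit exponential moment of omega}) it is finite and real-analytic on all of $\mathbb{R}$, whence $\lambda_N(\beta) \coloneqq N^{-1}\log \mathbb{E}[Z_N^{\beta,\omega}]$ is convex. The same log-convexity applied pathwise in $\omega$ shows that $\log Z_N^{\beta,\omega}$ is convex in $\beta$ for each fixed $\omega$, hence $\rho$ is convex too; both $\rho$ and $\lambda$ are therefore continuous on $[0,\infty)$ and so is $\rho-\lambda$. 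Convexity automatically gives differentiability of $\lambda$ off a countable set, and I would upgrade this to differentiability everywhere by combining uniform moment bounds on $\lambda_N'$ extracted from (\ref{eqn: finit exponential moment of omega}) with the auxiliary-field reduction to an i.i.d.-type analytic expression in the spirit of \cite{Comets/Yoshida}, whose smoothness then passes to the limit $\lambda$.

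The inequality $\rho(\beta)\leq\lambda(\beta)$ is Jensen applied to the normalized partition function $W_N^\beta \coloneqq Z_N^{\beta,\omega}/\mathbb{E}[Z_N^{\beta,\omega}]$: $\mathbb{E}[\log W_N^\beta] \leq \log\mathbb{E}[W_N^\beta] = 0$, after checking $L^1$-uniform integrability via (\ref{eqn: finit exponential moment of omega}). Monotonicity of $\rho-\lambda$ reduces to $\partial_\beta\mathbb{E}[\log W_N^\beta] \leq 0$, equivalently $\mathrm{Cov}_{\mathbb{P}}(W_N^\beta, M_N) \geq 0$ where $M_N \coloneqq \partial_\beta \log Z_N^{\beta,\omega}$ is the Gibbs-averaged energy under (\ref{eqn: Polymer measure, R-N derivative}); since both $W_N^\beta$ and $M_N$ are increasing functions of $\omega$, this is an FKG-type correlation inequality, which I would derive in the correlated setting via a Holley-type comparison against the auxiliary independent field. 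The main technical obstacle throughout is the uniform control of the error terms $R(N,M)$ and $r(N,M)$: under $(\textbf{TCG})_{C,g}$ the mixing sum extends over $\Delta \times A$ rather than $\partial\Delta \times \partial A$, which demands sharper geometric tail estimates before the subadditive machinery can be activated.
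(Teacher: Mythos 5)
Your proposal aligns in spirit with much of what the paper actually does: Jensen gives $\rho\leq\lambda$; an FKG-type argument (in the paper, applied to a decorrelated auxiliary field $\omega^\dagger$ for which the tilted Gibbs measure is a genuine product across time-strips) gives the monotonicity of $\rho-\lambda$; and the differentiability of $\lambda$ is obtained by bounding $\lambda_N''$ uniformly in $N$ to get equi-continuity of $\lambda_N'$, which is essentially what you sketch. For $\lambda$ itself, your almost-subadditive/Fekete plan is what Step~I of the paper's proof effectively does (the paper runs a liminf--limsup contradiction by applying the $\kappa$-bound of Lemma~\ref{lem: kappa-bound for a cone} at prime block lengths, but that is a cosmetic variant of Fekete for approximately subadditive sequences).

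Where you diverge is the existence of $\rho$, and there is a genuine gap in your route. The pathwise inequality you invoke,
\[
\log Z^{\beta,\omega}_{N+M}\;\leq\;\log Z^{\beta,\omega}_N+\log Z^{\beta,\theta_N\omega}_M+r(N,M),
\]
is not correct for the free-endpoint partition function: splitting at time $N$ yields
\[
Z^{\beta,\omega}_{N+M}\;=\;Z^{\beta,\omega}_N\sum_{x\in\mathbb{Z}^d}P^S_0(S_N=x)\,Z^{\beta,\vartheta_{x,N}\omega}_M,
\]
a \emph{mixture} of shifted partition functions over the random endpoint $S_N$, not a single product with shift $\theta_N$. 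The clean direction one extracts from this identity via Jensen is \emph{superadditive}, $\log Z_{N+M}\geq\log Z_N+\sum_x P^S_0(S_N=x)\log Z^{\theta_{x,N}\omega}_M$, which is exactly what the paper uses (Step~II) to run Fekete for $\mathbb{E}[\log Z_N]$, and then a concentration inequality for martingale differences together with Borel--Cantelli (Step~III, using \cite[Theorem~3.2]{Lesigne/Volny}) upgrades the $L^1$ limit to an $\mathbb{P}$-a.s.\ limit and identifies it with the quenched free energy. Your invocation of Kingman's subadditive ergodic theorem would instead require either the point-to-point partition functions $Z(0;(N,x))$ --- which \emph{are} subadditive --- followed by a separate comparison to the free-endpoint $Z_N$, or else an argument circumventing the mixture; as written the subadditivity step is missing and cannot be repaired by the $o(N)$ mixing error alone. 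Note the paper sidesteps this issue for the a.s.\ existence of $\rho$ entirely by citing the large-deviation result of \cite[Theorem~2.3]{Rassoul-Agha/Seppalainen/Yilmaz}, and only then proves the limit is $\lim_N N^{-1}\mathbb{E}[\log Z_N]$ via concentration.
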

    The first limit above is called the \textit{quenched free energy} and the second limit is called the \textit{annealed free energy}. The fact that $\rho(\beta)$ exists exploits the exponential time-correlation and is partly due to the finite exponential moment (\ref{eqn: finit exponential moment of omega}) of the time-correlated field $\omega$, in light of \cite{Rassoul-Agha/Seppalainen/Yilmaz}. Also remark that unless in the i.i.d. underlying field, the annealed free energy $\lambda(\beta)$ is not necessarily equal to $\log\mathbb{E}[e^{\beta\omega_{n,z}}]$ for any $(n,z)\in\mathbb{N}\times\mathbb{Z}^d$ due to the time-correlation.\par
        In light of Theorem \ref{thm: quenched and annealed free energies exist}, we know $\rho(\beta)\leq\lambda(\beta)$ for any $\beta\geq0$. However, whether equality is achieved is yet to be determined. As a function, $\beta\mapsto\rho(\vdot)-\lambda(\vdot)$ is continuous and non-increasing. And hence in principle, there exists a critical value $\beta_*\in[0,\infty]$ such that $\rho(\beta)=\lambda(\beta)$ when $\beta<\beta_*$, and $\rho(\beta)<\lambda(\beta)$ when $\beta>\beta_*$. Following the standard terminologies \cite{Comets}, we say the polymer is in delocalized phase, or \textit{delocaization} if $\beta<\beta_*$; and it is in localized phase, or \textit{localization} if  $\beta>\beta_*$. The aim of this paper is to locate the value of $\beta_*$ when the underlying walk $(S_n)_{n\geq0}$ is in transient dimensions and travels on a time-correlated random field.
    \begin{theorem}\label{thm: main results for beta*>0}
        \normalfont
        Under either the time-correlated condition $\textbf{(TC)}_{C,g}$ or $\textbf{(TCG)}_{C,g}$, there exists some small $\beta>0$ such that $\rho(\beta)=\lambda(\beta)$, i.e.~we always have $\beta_*>0$. In particular, if
        \[
            \lim_{\beta\nearrow\infty}\Lambda(\beta)=\lim_{\beta\nearrow\infty}\log \kappa_1(\beta)\kappa_2(\beta) + \log\mathbb{E}\big[e^{2\beta\omega_{1,0}}\big] + 2\log\mathbb{E}\big[e^{-\beta\omega_{1,0}}\big] < K
        \]
        for some constant $K>0$ determined in Appendix \ref{appendix b}, then $\beta_*=\infty$. Here $\kappa_1,\kappa_2:[0,\infty)\to\mathbb{R}$ with $\kappa_1(0)=\kappa_2(0)=1$ are continuous in $\beta$ induced by the time-correlation and are specified in Appendix \ref{lem: function Lambda(beta)}.
    \end{theorem}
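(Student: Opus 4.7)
The plan is to employ a second moment method adapted to the time-correlated setting. The goal is to establish that
\[
W_N(\beta) := \frac{\mathbb{E}[(Z_N^{\beta,\omega})^2]}{(\mathbb{E}[Z_N^{\beta,\omega}])^2}
\]
is bounded uniformly in $N$, for the range of $\beta$ asserted by the theorem. Granting this, the Paley--Zygmund inequality yields $\mathbb{P}(Z_N^{\beta,\omega} \geq \tfrac{1}{2}\mathbb{E}[Z_N^{\beta,\omega}]) \geq 1/(4\sup_N W_N(\beta)) > 0$ for every $N$, and applying reverse Fatou to these events produces a set of positive probability on which $\limsup_N N^{-1}\log Z_N^{\beta,\omega} \geq \lambda(\beta)$. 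Since Theorem~\ref{thm: quenched and annealed free energies exist} identifies this limit with the deterministic constant $\rho(\beta)$, comparison of constants gives $\rho(\beta) \geq \lambda(\beta)$, and the converse inequality from the same theorem closes the loop as $\rho(\beta) = \lambda(\beta)$, placing $\beta$ below $\beta_*$.

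The heart of the proof is the bound on $W_N(\beta)$. Writing the square as an expectation over an independent pair of walks $(S,S')$ started at $0$ and applying Fubini,
\[
\mathbb{E}[(Z_N^{\beta,\omega})^2] = E^S_0 \otimes E^{S'}_0\bigg[ \mathbb{E}\Big[\exp\Big(\beta\sum_{n=1}^N (\omega_{n,S_n}+\omega_{n,S'_n})\Big)\Big] \bigg].
\]
Under the Markovian field satisfying $\textbf{(TC)}_{C,g}$ or $\textbf{(TCG)}_{C,g}$, the ratio of the two-path exponential moment to the product of single-path exponential moments exhibits exponential decoupling in the spatial separation of the walks. Invoking the auxiliary-field separating technique alluded to in the introduction and developed in Section~\ref{sec: auxiliary fields}, I would replace $\omega$ by a surrogate field of the same one-point law but with controlled cone-mixing dependence, absorbing the replacement cost into multiplicative prefactors $\kappa_1(\beta)\kappa_2(\beta)$, continuous with $\kappa_1(0) = \kappa_2(0) = 1$. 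The expected estimate, up to lower-order corrections, is
\[
W_N(\beta) \leq E^S_0 \otimes E^{S'}_0\bigg[\exp\Big(\Lambda(\beta)\sum_{n=1}^N \mathbf{1}_{S_n = S'_n}\Big)\bigg],
\]
where $\Lambda(\beta) = \log(\kappa_1(\beta)\kappa_2(\beta)) + \log\mathbb{E}[e^{2\beta\omega_{1,0}}] + 2\log\mathbb{E}[e^{-\beta\omega_{1,0}}]$; the two negatively-signed log-moments arise from the Cram\'er-type tilt needed to compare $\mathbb{E}[Z_N^{\beta,\omega}]$ against the walk-dependent averages produced by the decoupling step.

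Because $(S_n - S'_n)_{n \geq 0}$ is itself a transient random walk in $d \geq 3$, its intersection local time $\sum_{n \geq 0}\mathbf{1}_{S_n = S'_n}$ is stochastically dominated by a geometric random variable and admits finite exponential moments of all rates strictly below an explicit threshold $K$ (essentially $-\log(1 - 1/G_d(0,0))$ with $G_d$ the Green's function of the walk), this being the constant fixed in Appendix~\ref{appendix b}. Continuity of $\Lambda$ with $\Lambda(0) = 0$ then gives $\Lambda(\beta) < K$ for all sufficiently small $\beta > 0$, so that $\sup_N W_N(\beta) < \infty$ and $\beta_* > 0$ follows. Under the stronger hypothesis $\lim_{\beta \nearrow \infty}\Lambda(\beta) < K$, the same estimate holds uniformly in $\beta \geq 0$, whence $\beta_* = \infty$.

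The principal obstacle is the decoupling step itself: the Dobrushin-type Radon--Nikodym bounds in Definitions~\ref{def: time-correlated} and \ref{def: Guo's time-correlated} only provide pointwise exponential-in-distance comparison, and compounding them across $N$ time steps while keeping the overall correction a constant in $N$ demands a careful block decomposition exploiting the Markov property of $\omega$ together with the finite-range jump structure of $(S_n)_{n \geq 0}$. The summation of the pair-interaction $e^{-g\abs{x-y}_1 - g\abs{m-k}}$ over $(m,x) \in \partial\Delta$ and $(k,y) \in \partial A$ appearing in the TC/TCG bound, weighted against the joint walk distribution of $(S, S')$, is where the combinatorial bookkeeping concentrates and where the appendix work quantifying $\kappa_1, \kappa_2$ and the constant $K$ will reside.
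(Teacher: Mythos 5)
Your high-level plan (a second moment method yielding uniform $L^2$-boundedness, with the exponential moment of the two-walk collision local time controlled by $\Lambda(\beta)<K$ in $d\geq3$) is in the right spirit, and your formula for $\Lambda(\beta)$ matches the paper's. But the object you propose to control, $W_N(\beta)=\mathbb{E}[(Z^{\beta,\omega}_N)^2]/(\mathbb{E}[Z^{\beta,\omega}_N])^2$, is not what the paper's machinery bounds, and there is a real obstruction to bounding it directly. Under $\textbf{(TC)}_{C,g}$ or $\textbf{(TCG)}_{C,g}$, peeling off time steps one at a time with Lemma~\ref{lem: kappa-bound for a cone} incurs a $\kappa$-type prefactor at \emph{every} step (collision or not), because consecutive time slices are always correlated; the naive iteration therefore yields a factor of order $\kappa^{cN}$, not a ``lower-order correction'' that can be absorbed into $\kappa_1(\beta)\kappa_2(\beta)$. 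Precisely to circumvent this, the paper does not work with $W^{\beta,\omega}_N$ at all: Section~\ref{sec: auxiliary fields} introduces the auxiliary coloring $\epsilon$, the renewal times $\tau^{(L)}_n$, the truncated field $\xi(l)$, and then Section~\ref{sec: localization} builds the \emph{approximate martingale} $\mathcal{H}_{n,\beta}$ of \eqref{eqn: definition of H}, whose block-wise $\Delta_{j,n}$ corrections make the second moment factorize exactly across renewal blocks. Lemma~\ref{lem: square-integrable martingale} bounds the second moment of $\mathcal{H}_n$, not of $W_N$; and Lemma~\ref{lem: mixing inequality for martingale H} relates the two only through a prefactor $\exp(n e^{-gtL})$ that grows (slowly) in $n$, so you cannot import that bound into a uniform $\sup_N\mathbb{E}[W_N^2]<\infty$ for the original $W_N$. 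Without that uniform bound your Paley--Zygmund/reverse-Fatou step does not close.

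The martingale structure you try to avoid is in fact load-bearing. The paper needs $(\mathcal{H}_{n,\beta})_n$ to be a genuine $(\mathscr{G}_n)$-martingale: $L^2$-boundedness then gives almost-sure convergence to a nonzero limit via the Bolthausen translation-invariance $0$--$1$ law in Lemma~\ref{lem: positive H limit}, and from $\mathcal{H}_\infty>0$ a.s.\ one extracts \eqref{eqn: dog later}, which controls the exponential growth rate of $E_Q[\mathcal{L}_{n,\beta}]$. The passage back to $\rho_l(\beta)=\lambda_l(\beta)$ then uses the renewal LLN for $\tau^{(L)}_n/n$ and the $l\to\infty$ limit via Lemma~\ref{lem: rho_l to rho, lambda_l to lambda}; your sketch skips all of this translation. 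Two smaller points: (i) under TC/TCG the relevant overlap is not $\mathbf{1}_{\{S_n=S'_n\}}$ but a near-collision event $\mathbf{1}_{\{|S_n-S'_n|_1\le r_0\}}$, with $r_0$ tied to the decay rate $g$; this is why Appendix~\ref{appendix b} works with $N^r_j(\Vec{X},\Vec{Y})$ in Lemma~\ref{lem: bound for ball probability} rather than an exact intersection local time. (ii) Consequently the constant $K$ is not the geometric-survival threshold $-\log(1-1/G_d(0,0))$ you guess, but comes from the ball-hitting estimate $K_1r_0^2$ and the products $\prod_k A_k^{6/(\pi^2k^2)}$, $\prod_k B_k^{6/(\pi^2k^2)}$ of \eqref{eqn: ak} and \eqref{eqn: dog a4}. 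So while your instinct about what the second-moment computation should ultimately look like is sound, the key technical content --- why the decoupling can be done without a $\kappa^N$ penalty --- is exactly the auxiliary-field/renewal construction, and your write-up defers it rather than supplying it.
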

    In Appendix \ref{lem: annealed free energy is differentiable}, we have shown that the limiting annealed free energy $\beta\mapsto\lambda(\beta)$ is Gâteaux differentiable and monotonic on $[0,\infty)$. And we can therefore properly define its differentials. Remark that the notion of derivatives of Gâteaux \cite{Gateaux} is the same as that of Fréchet on the line \cite{Zorn1}, so here $\lambda^\prime(\beta)$ is the Fréchet differential of annealed free energy as well.

    \begin{theorem}\label{thm: main results for beta*<infinity}
        \normalfont
        Under either the time-correlated condition $\textbf{(TC)}_{C,g}$ or $\textbf{(TCG)}_{C,g}$, whenever 
        \[
            \beta\frac{d\lambda}{d\beta}(\beta)-\lambda(\beta)>-K(S)\sum_{x\in\mathbb{Z}^d}P^S_0(S_1=x)\log P^S_0(S_1=x)
        \]
        for some constant $K(S)>0$ depending only on the random walk $(S_n)_{n\geq1}$, then $\rho(\beta)<\lambda(\beta)$. In particular, with $\Bbbk\coloneqq\esssup\omega_{1,0}$, if
        \[
            \log\frac{1}{\mathbb{P}(\omega_{1,0}=\Bbbk)}\geq K^\prime-K(S)\sum_{x\in\mathbb{Z}^d}P^S_0(S_1=x)\log P^S_0(S_1=x),
        \]
        then $0<\beta_*<\infty$, where $K^\prime$ is an absolute constant depending only on the time-correlation intensity $(C,g)$. 
    \end{theorem}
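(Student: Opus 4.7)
The plan is to establish the localization criterion via a quenched-vs-annealed one-step comparison, first reducing to the auxiliary near-independent field built in Section \ref{sec: auxiliary fields}. The hypothesis encodes the classical entropy-vs-energy balance: its left-hand side $\beta\lambda'(\beta)-\lambda(\beta)$ is the Legendre dual of $\lambda$ at $\lambda'(\beta)$, equivalently the relative entropy between the $\beta$-tilted and the untilted single-site annealed laws, whereas the right-hand side is a multiple of the Shannon entropy $H(S_1):=-\sum_x P^S_0(S_1=x)\log P^S_0(S_1=x)$ of one walk step.

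Writing $X_n:=Z^{\beta,\omega}_n/Z^{\beta,\omega}_{n-1}$ so that $\log Z^{\beta,\omega}_N=\sum_{n=1}^N\log X_n$, the core of the argument is a conditional one-step estimate
\[
    \mathbb{E}\bigl[\log X_n\,\big|\,\mathscr{F}_{\leq n-1}\bigr]\;\leq\;\log\mathbb{E}\bigl[X_n\,\big|\,\mathscr{F}_{\leq n-1}\bigr]-\delta,
\]
for a deterministic $\delta>0$. The right-hand side is, modulo exponentially decaying corrections from the auxiliary-field reduction, the annealed free-energy increment $\lambda(\beta)$. The gap $\delta$ is a quantitative strict-Jensen loss applied to the random sum $X_n=\sum_y q_y(\omega)e^{\beta\omega_{n,y}}$, where $q_y(\omega):=E^{\beta,\omega}_{n-1}[p(S_{n-1},y)]$ is the one-step polymer distribution at time $n$. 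A Rényi-type sharpening of Jensen's inequality bounds the loss from below by the competition
\[
    \delta\;\geq\;\bigl(\beta\lambda'(\beta)-\lambda(\beta)\bigr)-K(S)\,H(S_1),
\]
which is strictly positive under the hypothesis. Summing over $n=1,\ldots,N$ and invoking the almost-sure convergence from Theorem \ref{thm: quenched and annealed free energies exist} delivers $\rho(\beta)\leq\lambda(\beta)-\delta<\lambda(\beta)$, as required.

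For the \emph{in particular} assertion, I would let $\beta\to\infty$: since $\Bbbk=\esssup\omega_{1,0}$, the heuristic for a near-i.i.d.\ field gives $\lambda(\beta)=\beta\Bbbk+\log\mathbb{P}(\omega_{1,0}=\Bbbk)+o(1)$ and $\lambda'(\beta)\to\Bbbk$, so that $\beta\lambda'(\beta)-\lambda(\beta)\to\log(1/\mathbb{P}(\omega_{1,0}=\Bbbk))$. Under $\textbf{(TC)}_{C,g}$ or $\textbf{(TCG)}_{C,g}$ the same asymptotic holds up to an absolute constant $K'$ depending only on $(C,g)$. Hence $\log(1/\mathbb{P}(\omega_{1,0}=\Bbbk))\geq K'+K(S)H(S_1)$ eventually activates the main criterion, giving $\beta_*<\infty$; combined with $\beta_*>0$ from Theorem \ref{thm: main results for beta*>0}, we conclude $0<\beta_*<\infty$.

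The main obstacle is the one-step estimate under time-correlation. In the i.i.d.\ regime $\omega_{n,\cdot}$ is independent of $\mathscr{F}_{\leq n-1}$ and the gap $\delta$ comes directly from strict convexity of the exponential; under $\textbf{(TC)}_{C,g}$ or $\textbf{(TCG)}_{C,g}$ the increment is only approximately independent of the past. Comparing the conditional and unconditional laws via the cone-mixing decomposition of Section \ref{sec: auxiliary fields} yields exponentially small errors, but these must be shown small enough not to swallow $\delta$; that is precisely what forces the constants $K(S)$ and $K'$ to absorb the correlation corrections, and is the delicate technical point of the proof.
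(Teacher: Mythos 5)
Your proposal diverges substantially from the paper's proof, and the central step is asserted rather than established. The paper proves $\rho(\beta)<\lambda(\beta)$ by the fractional-moment method: for $\theta\in(0,1)$ one bounds
\[
\big(Z^{\beta,\omega}_N\big)^\theta \leq \sum_{S'} e^{\beta\theta\sum_k\omega_{k,S'_k}}\prod_{k=1}^N P^S_0\big(S_k-S_{k-1}=S'_k-S'_{k-1}\big)^\theta
\leq p_S^{(\theta-1)N}\sum_{S'} e^{\beta\theta\sum_k\omega_{k,S'_k}}P^S_0\big(S_\cdot=S'_\cdot\big),
\]
with $p_S$ the minimum nonzero one-step probability, yielding $\mathbb{E}[(Z^{\beta,\omega}_N)^\theta]\leq p_S^{(\theta-1)N}\mathbb{E}[Z^{\beta\theta,\omega}_N]$ and then $\rho(\beta)\leq\inf_{0<\theta<1}\theta^{-1}\{(\theta-1)\log p_S+\lambda(\beta\theta)\}$. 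Differentiating at $\theta=1$ gives $\log p_S+\beta\lambda'(\beta)-\lambda(\beta)$, so strict inequality $\rho(\beta)<\lambda(\beta)$ holds exactly when $\beta\lambda'(\beta)-\lambda(\beta)>-\log p_S=K(S)H(S_1)$. This recovers the stated entropy criterion directly from Jensen's inequality on the fractional moment and requires no statement about conditional one-step Jensen gaps.

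Your route replaces this with a one-step conditional estimate $\mathbb{E}[\log X_n\mid\mathscr{F}_{\leq n-1}]\leq\log\mathbb{E}[X_n\mid\mathscr{F}_{\leq n-1}]-\delta$ together with a claimed lower bound $\delta\geq(\beta\lambda'(\beta)-\lambda(\beta))-K(S)H(S_1)$ coming from a ``R\'enyi-type sharpening of Jensen's inequality.'' This last step is the entire content of the theorem and you give no derivation of it; to my knowledge there is no off-the-shelf quantitative Jensen gap that produces precisely this competition between the Legendre transform of $\lambda$ and the walk entropy. Indeed, the conditional Jensen gap for $X_n=\sum_y q_y(\omega)e^{\beta\omega_{n,y}}$ depends delicately on the random weights $q_y(\omega)$ (when $q$ is close to a point mass, the gap behaves like $\log\mathbb{E}[e^{\beta\omega}]-\beta\mathbb{E}[\omega]$, which has no visible relation to $H(S_1)$), and getting a uniform-in-$\omega$ lower bound of the correct form is precisely what the fractional-moment change of exponent circumvents. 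So the main criterion is left unproved in your proposal.

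The \emph{in particular} assertion you handle correctly and in essentially the same way as the paper: Lemma~\ref{lem: limit of entropy inequality} shows $\lim_{\beta\to\infty}\big(\beta\lambda'(\beta)-\lambda(\beta)\big)$ lies within a window of width $2\log\kappa$ around $\log\big(1/\mathbb{P}(\omega_{1,0}=\Bbbk)\big)$, giving $K'=-2\log\kappa$, and combining with Theorem~\ref{thm: main results for beta*>0} yields $0<\beta_*<\infty$. If you want to keep a one-step argument for the first assertion you would need to actually prove the claimed quantitative Jensen lower bound; otherwise the fractional-moment route is both the paper's approach and by far the more tractable one here.
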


    A particular nontrivial instance of time-correlated $\omega$ which satisfies Theorem \ref{thm: main results for beta*<infinity} is described below via Gaussians. Take $(\varphi_{\Vec{x}})_{\Vec{x}\in\mathbb{N}\times\mathbb{Z}^d}$ to be a centered Gaussian field on $\mathbb{N}\times\mathbb{Z}^d$ with covariance  $\langle\varphi_{\Vec{x}},\varphi_{\Vec{y}}\rangle = \text{exp}(-\|\Vec{x}-\Vec{y}|_{\ell^1(\mathbb{N}\times\mathbb{Z}^d)}) G(\Vec{x},\Vec{y})$ where $G(\vdot)$ denotes the Green function of a Gaussian free field (GFF) \cite{Werner/Powell} on $\mathbb{Z}_+\times\mathbb{Z}^d$. The existence of such $(\varphi_{\Vec{x}})_{\Vec{x}\in\mathbb{N}\times\mathbb{Z}^d}$ follows similarly from the existence of GFF, and could be found in \cite{Werner/Powell}. Obviously, if we take such $(\varphi_{\Vec{x}})_{\Vec{x}\in\mathbb{N}\times\mathbb{Z}^d}$ to be the underlying time-correlated field, then $0<\beta_*<\infty$.\par    
    One should remark that when the underlying field $\omega$ is i.i.d. in space and time, then the constant $K^\prime$ vanishes by our subsequent derivation. We leave it for future project to sharpen the constants $K(S)$ and $K^\prime$ in the above entropy-type criteria. Because $\rho(0)=\lambda(0)=0$ when $\beta=0$. Therefore, without loss of any generality, we assume $\beta>0$.    
\section{Two auxiliary fields}\label{sec: auxiliary fields}
    Define $\mathcal{W}\coloneqq\{-1,0,1\}$ and define the product probability measure $Q$ on $\epsilon=(\epsilon_1,\epsilon_2,\ldots)\in(\mathcal{W})^{\mathbb{N}}$ by $Q(\epsilon_1=1)=Q(\epsilon_1=-1)=\frac{1}{4}$, $Q(\epsilon_1=0)=\frac{1}{2}$. We also introduce two auxiliary random fields $\eta=(\eta_{n,z}: \,n\in\mathbb{N},z\in\mathbb{Z}^d)$ and $\xi(l)=(\xi^l_{n,z}:\,n\in\mathbb{N},z\in\mathbb{Z}^d)$ for each $l\geq1$ on $\mathbb{N}\times\mathbb{Z}^d$ by
    \begin{subequations}
        \begin{equation}\label{eqn: auxiliary field eta}
            \eta_{n,z} = 0\vdot\mathbbm{1}_{\{\epsilon_n=\pm1\}} + 2\omega_{n,z} \mathbbm{1}_{\{\epsilon_n=0\}},
        \end{equation}
        and, for each index $l\geq1$,
        \begin{equation}\label{eqn: auxiliary field xi}
            \xi^l_{n,z} = -\beta l\mathbbm{1}_{\{\epsilon_n=\pm1\}}+\log(2e^{\beta\omega_{n,z}}-e^{-\beta l})\mathbbm{1}_{\{\epsilon_n=0\}}
        \end{equation}
    \end{subequations}
    for each $n\in\mathbb{N}$ and $z\in\mathbb{Z}^d$. It is not hard to see that $ E_Q[\eta_{n,z}]=\omega_{n,z} $ for all $(n,z)\in\mathbb{N}\times\mathbb{Z}^d$ and $E_Q[Z^{1,\xi(l)}_N]=Z^{\beta,\omega(l)}_N$ for each time point $N\in\mathbb{N}$ with $\omega(l)=(\max\{\omega_{n,z},-l\}:\,n\in\mathbb{N},z\in\mathbb{Z}^d)$, and that the new indexed auxiliary field $\xi(l)$ satisfies (\ref{eqn: finit exponential moment of omega}) as well.\par
    Following this convention, we denote $\overline{P}^S_0\coloneqq Q\otimes P^S_0$ and subsequently $\overline{E}^S_0\coloneqq E_{Q\otimes P^S_0}$. Let us now select a random time sequence $(\tau^{(L)}_n)_{n\geq0}$ given by $\tau^{(L)}_0=0$ and
    \[
        \tau^{(L)}_n\coloneqq\inf\big\{j\geq\tau^{(L)}_{n-1}+L:\,(\epsilon_{j-L},\ldots,\epsilon_{j-1})=+1,\ldots,1,\,\epsilon_j=-1,0\big\},\qquad\forall~n\geq1.
    \]
    To quantify the correlated information from the random field, we therefore also define the following random field $\sigma$-algebras on $(\mathbb{N}\times\mathbb{Z}^d)\times(\mathcal{W})^{\mathbb{N}}$ by $\mathscr{G}_0\coloneqq\sigma(\omega_{k,z}:\,k\leq-L,\,z\in\mathbb{Z}^d)$, and 
    \[
        \mathscr{G}_n\coloneqq\sigma\big(\tau^{(L)}_1,\ldots,\tau^{(L)}_n,\,\omega_{k,z}:\,k\leq\tau^{(L)}_n-L,\,z\in\mathbb{Z}^d,\,\epsilon_i:\,i=1,\ldots,\tau^{(L)}_n\big),\qquad\forall~n\geq1.
    \]
    We also define the specific field $\sigma$-algebras $\mathscr{F}^{(L)}_n\coloneqq\sigma(\omega_{k,z}:\,k\leq n-L,\,z\in\mathbb{Z}^d,\,\epsilon_i:\,i=1,\ldots,n)$.\par
    The spirit of this type of auxiliary structure was designed to resolve the correlated and mixing random medium in Comets/Zeitouni \cite{Comets/Zeitouni} in the context of random walk in random environment, and later adopted by Guerra Aguilar \cite{Guerra Aguilar} and myself \cite{Chen} to deal with the central limit theorem and large deviation principles for the same model, respectively. To the best of our knowledge, this techniques has not been introduced in the study of directed polymers before.\par
    Define $W^{\beta,\omega}_N\coloneqq\frac{Z^{\beta,\omega}_N}{\mathbb{E}[Z^{\beta,\omega}_N]}$. Choose some sufficiently small angle $\zeta_0>0$ such that the space-time cones
    \[
        C(k,x,\gamma,\zeta)\coloneqq\big\{(n,z)\in\mathbb{N}\times\mathbb{Z}^d:\,\langle \Vec{z}-\Vec{x},\gamma\rangle\geq\zeta\abs{\gamma}_2\abs{\Vec{z}-\Vec{x}}_2\big\},\quad\forall~(k,x)\in\mathbb{N}\times\mathbb{Z}^d,\;\;0<\zeta<\zeta_0,
    \]
    admits $S_{n+1}\in C(n,S_n,\gamma,\zeta)$ for any $n\in\mathbb{N}$, $P^S_0$-a.s. Here we write $\Vec{z}\coloneqq(n,z)$, $\Vec{x}\coloneqq(k,x)$, and $\gamma\in\mathbb{S}^d$ is a $(d+1)$-dimensional directional vector. It is not hard to see that $\tau^{(L)}_1<\cdots<\tau^{(L)}_n<\infty$ for any $n\geq1$. In fact, we have the following lemmas.
    
    \begin{lemma}\label{lem: tau lower bound}
        \normalfont
        For any real $p\geq1$, there exists some constants $c_p,c^\prime_p>0$ independent of $L$ such that
        \[
            c_p\leq E_Q\big[(\Bar{\tau}^{(L)}_1)^p\big]^{1/p}\leq c^\prime_p
        \]
        for all $L$, where we let $\Bar{\tau}^{(L)}_1\coloneqq4^{-L}\tau^{(L)}_1$.
    \end{lemma}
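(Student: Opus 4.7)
The plan is to combine elementary moment estimates with a renewal-type iteration of Markov's inequality. Throughout I write $p_L \coloneqq 4^{-L} \cdot \tfrac{3}{4}$ for the probability that the terminating pattern---namely $L$ consecutive $+1$'s followed by a symbol in $\{-1,0\}$---ends at any fixed position, and $M_L \coloneqq E_Q[\tau_1^{(L)}]$.

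For the lower bound, monotonicity of $L^p$-norms reduces the task to producing a lower bound on $E_Q[\bar\tau_1^{(L)}]$. A union bound over the possible ending positions gives $Q(\tau_1^{(L)} \le t) \le t\, p_L$, and integrating the survival function on $[0, 1/(2p_L)]$ yields $M_L \ge \tfrac{3}{8p_L} \ge \tfrac{1}{2}\cdot 4^L$. Hence $c_p \coloneqq 1/2$ works for every $p \ge 1$.

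For the upper bound, I introduce the state variable $k_t \in \{0,1,\ldots,L\}$ equal to the capped current run of $+1$'s at time $t$, and write $\nu_k$ for the expected remaining time to complete the pattern starting from state $k$. A one-step decomposition yields the triangular system $\nu_L = 1 + \tfrac{1}{4}\nu_L$ and $\nu_k = 1 + \tfrac{1}{4}\nu_{k+1} + \tfrac{3}{4}\nu_0$ for $0 \le k < L$, which solves explicitly to $\nu_0 = M_L = \tfrac{4}{3}\cdot 4^L$ and, more importantly, to the monotonicity $\nu_k \le \nu_0$ for every $k$: a head start of $+1$'s can only shorten the remaining wait.

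The key step is then an iterated Markov argument based on the i.i.d.\ structure of $\epsilon$. Conditionally on $\{\tau_1^{(L)} > t\}$ for any deterministic $t \ge 0$, the sequence $(\epsilon_{t+1}, \epsilon_{t+2}, \ldots)$ is independent of the past given the state $k_t$, so
\[
    E_Q\big[\tau_1^{(L)} - t \,\big|\, \tau_1^{(L)} > t\big] = E_Q\big[\nu_{k_t} \,\big|\, \tau_1^{(L)} > t\big] \le M_L.
\]
Plugging $t = 2M_L$ into Markov's inequality gives $Q(\tau_1^{(L)} > 4M_L \mid \tau_1^{(L)} > 2M_L) \le 1/2$, and iterating produces $Q(\tau_1^{(L)} > 2kM_L) \le 2^{-k}$ for every $k \ge 1$. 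Hence $\bar\tau_1^{(L)}$ enjoys a uniform-in-$L$ sub-exponential tail, so every polynomial moment is bounded by some constant $c'_p$ independent of $L$. The main obstacle is precisely the overshoot identity above: one must track the (possibly non-zero) state $k_t$ under the conditioning and secure the monotonicity $\nu_{k_t} \le \nu_0$, which the explicit solution of the $\nu_k$-recursion supplies for this particular pattern but would require reexamination for more intricate return-time problems.
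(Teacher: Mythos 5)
Your proof is correct, but it takes a genuinely different and more self-contained route than the paper in both halves. For the lower bound, the paper introduces a Markov chain $U_n$ tracking the current run of $+1$'s, decomposes $\tau_1^{(L)}$ along successive returns to state $1$, bounds it below by a $\mathrm{Geometric}(4^{-L+1})$-indexed sum of trials each of length $\geq 1$, and then passes to a limit in $L$; you instead get there by the one-line union bound $Q(\tau_1^{(L)}\le t)\le t\,p_L$ followed by integrating the survival function, which is simpler and avoids the limiting argument (and incidentally sidesteps a typo in the paper, which writes $\theta 4^{-L}$ where $\theta 4^{L}$ is meant). For the upper bound, the paper works directly with $E_Q[e^{\bar\tau_1^{(L)}}]$, splitting into short and long time scales with events $A_n$, $B_n$ and importing the bound $Q(B_n)\le (1-cL^2 4^{-L})^{\lfloor n/L^2\rfloor}$ from Guerra Aguilar--Ramírez; you instead set up the triangular recursion for the hitting-time quantities $\nu_k$, solve it to get both $\nu_0=\tfrac{4}{3}4^L$ and the monotonicity $\nu_k\le\nu_0$, and then run an iterated-Markov overshoot argument to extract a uniform geometric tail $Q(\bar\tau_1^{(L)}>\tfrac{8}{3}k)\le 2^{-k}$, which of course also yields the finite exponential moment the paper establishes. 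Your version has the advantage of being entirely elementary and not relying on an external lemma; the paper's version is terser at the cost of the citation. One small point worth flagging in a final write-up: when you plug $t=2M_L$ into the overshoot identity, $2M_L$ need not be an integer, so one should take $t=\lceil 2M_L\rceil$ (this only improves the bound and does not affect the conclusion).
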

    \begin{proof}
        We divide the proof of this lemma into several steps.\par\noindent
        \textbf{Step I.} Lower-bound.\par\noindent
        Define a Markov Chain $(U_n)_{n\geq1}$ on state space $\{0,1,\ldots,L\}$, with $U_0=0$ and 
        \[
            U_n=\max\big\{k\geq1:\,(\epsilon_{n-k+1},\ldots,\epsilon_n)=(1,\ldots,1)\big\}\vee0.
        \]
        Then, it is not hard to see that $\tau^{(L)}_1\geq\min\{n\geq1:\,U_n=L\}$, see \cite[p. 892]{Comets/Zeitouni}. Consider the successive times when $U_n=1$, then $\tau^{(L)}_1$ can be lower-bounded by a sum of a $\text{Geometric}(4^{-L+1})$ number of independent random variables which are bounded below by $1$. Thus,
        \[
            \varliminf_{L\to\infty} Q\big(\tau^{(L)}_1\geq\theta4^{-L}\big)\geq g(\theta)\qquad\text{for some}\quad g(\theta)\xrightarrow[]{\theta\to0}1.
        \]
        Therefore,
        \[
            E_Q\big[\Bar{\tau}^{(L)}_1\big]\geq 4^{-L}E_Q\big[\tau^{(L)}_1\mathbbm{1}_{\{ \tau^{(L)}_1\geq\theta4^{-L} \}}\big]\geq \theta\big(1-Q(\tau^{(L)}_1<\theta4^{-L})\big)\geq c>0,
        \]
        provided that we choose $\theta$ sufficiently small such that $\varlimsup_{L\to\infty} Q(\tau^{(L)}_1<\theta4^{-L})<\frac{1}{2}$.\par\noindent
        \textbf{Step II.} Upper-bound.\par\noindent
        We will actually prove the exponential moment of $\Bar{\tau}^{(L)}_1$ is finite, which is nevertheless stronger then the claim. Define the events for each $n\geq1$ by
        \[
        A_n\coloneqq\big\{\epsilon\in(\mathcal{W})^{\mathbb{N}}:\,(\epsilon_{n-L},\ldots,\epsilon_{n-1})=1,\ldots,1,\,\epsilon_n=-1,0\big\}
        \]
        and 
        \[
            B_n\coloneqq\big\{\epsilon\in(\mathcal{W})^{\mathbb{N}}:\,(\epsilon_{j-L},\ldots,\epsilon_{j-1},\epsilon_j)\neq1,\ldots,1,-1\,\text{or}\,0,\,\forall~L\leq j\leq n-L-1\big\}.
        \]
        Hence,
        \begin{equation*}\begin{aligned}
            E_Q\big[e^{\Bar{\tau}^{(L)}_1}\big]/3 &\leq \sum_{n=1}^{2L} E_Q\big[e^{4^{-L}n},\,A_n\big] +\sum_{n=2L+1}^\infty E_Q\big[e^{4^{-L}n},\,A_n,\,B_n\big]\\ 
            &\leq (L^2-1)4^{-(L+1)} e^{L4^{-(L-1)}} + 4^{-(L+1)} \sum_{k=L^2}^\infty E_Q\big[e^{4^{-L}n},\,B_n\big].
        \end{aligned}\end{equation*}
        By \cite[Lemma 6.6]{Guerra Aguilar/Ramirez}, $Q(B_n)\leq (1-cL^2 4^{-L})^{ \lfloor n/L^2\rfloor }$ for each $n\geq L^2$. Hence,
        \[
            E_Q\big[e^{\Bar{\tau}^{(L)}_1}\big] \leq K + \tfrac{3}{2}L^2 4^{-L} \sum_{k=1}^\infty \big(e^{L^2 4^{-L}}(1-cL^2 4^{-L})\big)^k\leq K+ \tfrac{3 K^2 4^{-L}/2}{e^{-L^2 4^{-L}}- (1-cL^2 4^{-L}) } \leq K+K^\prime<\infty,
        \]
        where $K,K^\prime$ are absolute constants independent of $L$. And the assertion is verified.      
    \end{proof}
    \begin{lemma}\label{lem: tau mixing inequality}
        \normalfont
        Let $f:(\mathbb{R})^{\mathbb{N}}\to\mathbb{R}$ be a bounded Borel measurable function. For any $n\in\mathbb{N}$, we abbreviate both the finite-time process $(\sum_{k=\tau^{(L)}_{n-1}+1}^i \eta_{k,S_k})_{\tau^{(L)}_{n-1}\leq i\leq\tau^{(L)}_n}$ and the process $(\sum_{k=\tau^{(L)}_{n-1}+1}^i \xi^l_{k,S_k})_{\tau^{(L)}_{n-1}\leq i\leq\tau^{(L)}_n}$ by $S^{\tau,n}_{\vdot}$. Then, under either $(\textbf{TC})_{C,g}$ or $(\textbf{TCG})_{C,g}$ and for each $t\in(0,1)$ there exists $L_0 = L_0(C, g, d) \in \mathbb{N}$ so that $\mathbb{P}\otimes P^S_0\otimes Q$-a.s. for all $L\geq L_0$,
        \begin{equation*}\begin{aligned}                                        
            \exp \big(-e^{-gtL}\big) \mathbb{E}\overline{E}^S_0 \big[f(S^{\tau,1}_{\vdot})\big] \leq \mathbb{E}\overline{E}^S_0 \big[f(S^{\tau,n}_{\vdot}) \big|\mathscr{G}_{n-1}\big]\leq \exp\big(e^{-gtL}\big)\mathbb{E}\overline{E}^S_0\big[f(S^{\tau,1}_{\vdot})\big].
        \end{aligned}\end{equation*}
    \end{lemma}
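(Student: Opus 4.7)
The plan is to exploit the $L$-time gap built into the random times $\tau^{(L)}_n$: by construction $\mathscr{G}_{n-1}$ knows $\omega$ only up to time $\tau^{(L)}_{n-1}-L$, whereas the process $S^{\tau,n}_{\cdot}$ only sees $\omega$ at times $\geq\tau^{(L)}_{n-1}+1$, so the temporal separation between the two relevant space-time regions is always at least $L+1$. Combined with the regenerative structure of $\tau^{(L)}$ under $Q$ and the shift-invariance of the Markovian field $\omega$, this should allow a comparison of the conditional expectation with the first-block unconditional expectation, up to a two-sided multiplicative factor coming from $(\textbf{TC})_{C,g}$ or $(\textbf{TCG})_{C,g}$.

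First I would condition on the full walk $(S_k)_{k\geq0}$, on $(\epsilon_i)_{i\geq1}$, and on $\tau^{(L)}_{n-1}$, reducing the question to a comparison of conditional laws of $\omega$ at the now-deterministic set $\Delta=\{(k,S_k):\tau^{(L)}_{n-1}+1\leq k\leq\tau^{(L)}_n\}$. Choosing $V$ to be an $L$-neighborhood of $\Delta$ in the time direction and $A=\{(k,y):k\leq\tau^{(L)}_{n-1}-L,\,y\in\mathbb{Z}^d\}\subseteq V^c$, I would invoke either Definition \ref{def: time-correlated} or Definition \ref{def: Guo's time-correlated} to bound the Radon--Nikod\'ym derivative of $\mathbb{P}(\omega|_\Delta\in\cdot\,|\,\mathscr{G}_{n-1})$ against the analogous unconditional probability with $\mathscr{G}_{n-1}$-information replaced by a reference configuration, thereby producing a factor $\exp(C\Sigma)$ with $\Sigma$ the time-correlation double sum.

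Next I would bound $\Sigma$. The inner spatial sum $\sum_{y\in\mathbb{Z}^d}e^{-g|x-y|_1}$ is a $d$-dimensional geometric series bounded by some $C_d<\infty$, and for every remaining temporal pair $(m,k)$ one has $m-k\geq L+(m-\tau^{(L)}_{n-1})$, so the sum over $k\leq\tau^{(L)}_{n-1}-L$ yields a factor $e^{-g(L+m-\tau^{(L)}_{n-1})}/(1-e^{-g})$. Under $(\textbf{TCG})_{C,g}$ one still has to sum over $(m,S_m)\in\Delta$, but the geometric decay in $(m-\tau^{(L)}_{n-1})$ makes the outer series convergent with a bound of the form $C'''e^{-gL}$, crucially independent of the random length $\tau^{(L)}_n-\tau^{(L)}_{n-1}$; under $(\textbf{TC})_{C,g}$ only boundary terms contribute and the estimate is sharper. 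For any $t\in(0,1)$, choosing $L_0$ so large that $CC'''e^{-gL}\leq e^{-gtL}$ for all $L\geq L_0$ yields the upper bound $\exp(e^{-gtL})$ and, by the two-sided nature of the Radon--Nikod\'ym estimate, the matching lower bound $\exp(-e^{-gtL})$.

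Finally, to match the reference quantity $\mathbb{E}\overline{E}^S_0[f(S^{\tau,1}_{\cdot})]$ I would combine three ingredients: (a) given $\tau^{(L)}_{n-1}$, the sequence $(\epsilon_{\tau^{(L)}_{n-1}+j})_{j\geq1}$ is an i.i.d.\ copy of $(\epsilon_j)_{j\geq1}$ under $Q$, so the increment $\tau^{(L)}_n-\tau^{(L)}_{n-1}$ has the same distribution as $\tau^{(L)}_1$; (b) spatial translation invariance of the walk increments; and (c) time-shift invariance of $\omega$. The main obstacle is an administrative one: choosing $V$ so that $d_1(\Delta,V^c)>1$ while retaining $A\subseteq V^c$, and tracking $\Sigma$ uniformly in the random realization of $\tau^{(L)}_{n-1}$ and the path $(S_k)$, especially under $(\textbf{TCG})_{C,g}$ where the unbounded cardinality of $\Delta$ must be absorbed by geometric decay rather than by any a priori length bound on $\tau^{(L)}_n-\tau^{(L)}_{n-1}$.
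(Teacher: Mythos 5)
Your proposal follows essentially the same strategy as the paper's proof: condition on the walk and on $\epsilon$ (hence on $\tau^{(L)}_{n-1}$) to reduce to a Radon--Nikod\'ym comparison of the conditional vs.\ unconditional law of $\omega$ on the random path segment, invoke Definition \ref{def: time-correlated} or \ref{def: Guo's time-correlated} with the past half-space $A$ separated from the path by an $L$-gap in time, and bound the resulting exponent by $C'''e^{-gL}$ using geometric decay, then choose $L_0$ so that this is $\le e^{-gtL}$. The only organizational difference is that the paper sandwiches the random path inside the deterministic space-time cone $\mathbb{C}_{k,x}=C(k,x,\hat{\gamma},\zeta_0)$ and bounds the double sum over $\partial^r\mathbb{H}_{L,k,x}\times\partial^r\mathbb{C}_{k,x}$ (counting cone-shell sites $|\mathbb{K}_{L,k,x,n}|\leq Cr^2(n+1)^{2(d-1)}$), whereas you bound directly on the random path by summing out the spatial coordinate to a constant $C_d$ and using the temporal exponential decay in $m-\tau^{(L)}_{n-1}$ to kill the unbounded path length; the two computations are equivalent in content. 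You correctly flag the administrative point about finiteness of $V$; the paper handles this with the auxiliary sets $\mathbb{H}^{(n)}_{L,k,x}$ and a limit $n\to\infty$ in \eqref{eqn: ghi}, a step you would also need but which does not change the estimate.
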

    \begin{proof}
        Take bounded and $\mathscr{G}_{n-1}$-measurable $h:(\mathbb{R})^{\mathbb{N}\times\mathbb{Z}^d}\times(\mathcal{W})^{\mathbb{N}}\to\mathbb{R}$, where $\mathcal{W}\coloneqq\{-1,0,1\}$. Then,
        \[
            \mathbb{E}\overline{E}^S_0 \big[f(S^{\tau,n}_{\vdot})h\big] = \sum_{k\in\mathbb{N}} \mathbb{E}\overline{E}^S_0 \big[f(S^{\tau,n}_{\vdot}) h_k,\,\tau^{(L)}_{n-1}=k\big],
        \]
        because on the event $\{\tau^{(L)}_{n-1}=k\}$, we can find a bounded function $h_{k}$ which is $\mathscr{F}^{(L)}_{k}$-measurable and coincides with $h$ on this event. Then, we observe that
        \begin{equation}\label{eqn: decompose conditional expectation}
            \mathbb{E}\overline{E}^S_0\big[f(S^{\tau,n}_{\vdot}) h\big] = \sum_{k\in\mathbb{N}} \mathbb{E}\overline{E}^S_0 \big[ h_k,\, \tau^{(L)}_{n-1}=k,\, \mathbb{E}\overline{E}^S_0 [f(S^{\tau,1}_{k+\vdot})|\mathscr{F}^{(L)}_{k}]\big],
        \end{equation}
        Consider the space-time hyperplane $\mathbb{H}_{L,x,s}$ and the cone-like region $\mathbb{C}_{x,s}$ defined respectively by
        \[
            \mathbb{H}_{L,k,x}\coloneqq\big\{(m,z)\in\mathbb{N}\times\mathbb{Z}^d:\, m-k\leq-L \big\} \quad\text{and}\quad\mathbb{C}_{k,x}\coloneqq C(k,x,\hat{\gamma},\zeta_0),
        \]
        where $\hat{\zeta}$ is the unit vector in the time-direction. In terms of time-correlation $\textbf{(TC)}_{C,g}$, we first estimate the series
        \begin{equation}\label{eqn: estimate series1, SM}
            \sum_{\Vec{y}\in\partial^r\mathbb{H}_{L,k,x}} \sum_{\Vec{z}\in\partial^r\mathbb{C}_{k,x}}\exp\big(-g\abs{\Vec{y}-\Vec{z}}_1\big).
        \end{equation}
        And in terms of Guo's time-correlation $\textbf{(TCG)}_{C,g}$, we second estimate the series
        \begin{equation}\label{eqn: estimate series2, SMG}
            \sum_{\Vec{y}\in\mathbb{H}_{L,k,x}} \sum_{\Vec{z}\in\mathbb{C}_{k,x}}\exp\big(-g\abs{\Vec{y}-\Vec{z}}_1\big).
        \end{equation}
        Notice that with $L$ sufficiently large, both series converge because $\mathbb{C}_{k,x,s}$ is cone-like. Indeed, choose $\hat{t}\in(t,1)$ and consider (\ref{eqn: estimate series1, SM}). We take $L$ large enough such that $L>(1-\hat{t})^{-1}2r$, and thus $L-2r>\hat{t}L$. Setting
        \[
            \mathbb{K}_{L,k,x,n}\coloneqq\big\{(\Vec{y},\Vec{z}):\,\Vec{y}\in\partial^r\mathbb{H}_{L,k,x},\,\Vec{z}\in\partial^r\mathbb{C}_{k,x},\,\hat{t}L+n\leq\abs{\Vec{y}-\Vec{z}}_1<\hat{t}L+n+1\big\}.
        \]
        Whence we have
        \[
            \sum_{\Vec{y}\in\partial^r\mathbb{H}_{L,k,x}} \sum_{\Vec{z}\in\partial^r\mathbb{C}_{k,x}}\exp\big(-g\abs{y-z}_1\big) \leq \sum_{n\geq0}\sum_{(\Vec{y},\Vec{z})\in\mathbb{K}_{L,k,x,n}} e^{-g\abs{\Vec{y}-\Vec{z}}_1} \leq \sum_{n\geq0} \abs{\mathbb{K}_{L,k,x,n}}e^{-g(\hat{t}L+n)}.
        \]
        Since $\abs{\mathbb{K}_{L,k,x,n}}\leq Cr^2(n+1)^{2(d-1)}$, we have
        \begin{equation}
            \sum_{\Vec{y}\in\partial^r\mathbb{H}_{L,k,x}} \sum_{\Vec{z}\in\partial^r\mathbb{C}_{k,x}}\exp\big(-g\abs{\Vec{y}-\Vec{z}}_1\big) \leq \sum_{n\geq0} (n+1)^{2(d-1)} e^{-gn} \leq e^{-g\Tilde{t}L},\quad\text{with}~\Tilde{t}\in(t,\hat{t}),
        \end{equation}
        yielding an estimate to (\ref{eqn: estimate series1, SM}). Alternatively, we consider (\ref{eqn: estimate series2, SMG}). Performing a very similar argument, we can find some $\Tilde{t}\in(t,1)$ such that
        \begin{equation}
            \sum_{\Vec{y}\in\mathbb{H}_{L,k,x}} \sum_{\Vec{z}\in\mathbb{C}_{k,x}}\exp\big(-g\abs{\Vec{y}-\Vec{z}}_1\big) \leq e^{-g\Tilde{t}L},
        \end{equation}
        which yields an estimate to (\ref{eqn: estimate series2, SMG}). Hence, in terms of $\textbf{(TC)}_{C,g}$, for a given finite path $x_{\vdot}=(x_i)_{0\leq i\leq m}$ in $ \mathbb{C}_{k_0,x_0}$ starting from $(k_0,x_0)$, we have that uniformly on $m$ and with sufficiently large $L$, 
        \begin{equation}\label{eqn: abc}                    
            \sum_{\Vec{y}\in\partial^r\mathbb{H}_{L,k_0,x_0}} \sum_{\Vec{z}\in\partial^r\mathbb{G}_{k_0,x_0}}\exp\big(-g\abs{\Vec{y}-(\Vec{z}-(k_0,x_0))}_1\big) \leq e^{-g\Tilde{t}L},
        \end{equation}
        where we denote $\mathbb{G}_{k_0,x_0}\coloneqq\{\Vec{y}\in\mathbb{N}\times\mathbb{Z}^d:\,\Vec{y}=x_i~\text{for some}~0\leq i\leq m\}$. Likewise, in terms of $\textbf{(TCG)}_{C,g}$, we can find sufficiently large $L$ so that uniformly on $m$,
        \begin{equation}\label{eqn: def}
            \sum_{\Vec{y}\in\mathbb{H}_{L,k_0,x_0}} \sum_{0\leq i\leq m}\exp\big(-g\abs{\Vec{y}-(x_i-x_0)}_1\big) \leq e^{-g\Tilde{t}L}.
        \end{equation}
        From the last term in (\ref{eqn: decompose conditional expectation}), we have
        \begin{equation*}
            \mathbb{E}\overline{E}^S_0 \big[f(S^{\tau,1}_{k+\vdot})\big|\mathscr{F}^{(L)}_{k}\big] = \mathbb{E}\overline{E}^S_0\big[f(S^{\tau,1}_{k+\vdot}) \big|\mathscr{F}_{\mathbb{H}_{L,\hat{\gamma}k}}\big],
        \end{equation*}
        because $Q$ is a product probability measure on $(\mathcal{W})^{\mathbb{N}}$. We further denote by 
        \[
            \mathbb{H}^{(n)}_{L,k,x}\coloneqq\mathbb{H}_{L,k,x}\cup\{\Vec{z}\in\mathbb{N}\times\mathbb{Z}^d:\,\abs{\Vec{z}}_1\geq n\},\quad\forall~(k,x)\in\mathbb{N}\times\mathbb{Z}^d.
        \]
        Either the condition $\textbf{(TC)}_{C,g}$ [Definition \ref{def: time-correlated}] or the condition $\textbf{(TCG)}_{C,g}$ [Definition \ref{def: Guo's time-correlated}] together with (\ref{eqn: abc}) implies that
        \[
            \exp\big(-Ce^{-g\Tilde{t}L}\big) \leq \frac{ \mathbb{E}\overline{E}^S_0\big[f(S^{\tau,1}_{k+\vdot}),\,\gamma = (S_{k+i}-S_k)_{0\leq i\leq \tau^{(L)}_1} \big|\mathscr{F}_{\mathbb{H}^{(n)}_{L,\hat{\gamma}k}}\big] }{\mathbb{E} \overline{E}^S_0\big[f(S^{\tau,1}_{k+\vdot}),\,\gamma = (S_{k+i}-S_k)_{0\leq i\leq \tau^{(L)}_1} \big]}\leq \exp\big(Ce^{-g\Tilde{t}L}\big)
        \]
        uniformly for bounded $f$ and for any finite path $\gamma$ satisfying (\ref{eqn: abc}) [or respectively (\ref{eqn: def})]. Taking some suitable $\Bar{t}\in(t,\Tilde{t})$ and letting $n\to\infty$, we have
        \begin{equation}\label{eqn: ghi}                   
            \exp\big(-e^{-g\Bar{t}L}\big) \leq \frac{\sum_\gamma \mathbb{E}\overline{E}^S_0\big[f(S^{\tau,1}_{k+\vdot}),\,\gamma = (S_{k+i}-S_k)_{0\leq i\leq \tau^{(L)}_1} \big|\mathscr{F}_{\mathbb{H}^{(n)}_{L,\hat{\gamma}k}}\big] }{\sum_\gamma \mathbb{E} \overline{E}^S_0\big[f(S^{\tau,1}_{k+\vdot}),\,\gamma = (S_{k+i}-S_k)_{0\leq i\leq \tau^{(L)}_1} \big]} \leq \exp\big(e^{-g\Bar{t}L}\big).
        \end{equation}
        Taking (\ref{eqn: ghi}) into (\ref{eqn: decompose conditional expectation}), letting $L_0(t)$ be sufficiently large and $L\geq L_0$, we verify the assertion.       
    \end{proof}
    
    \begin{lemma}\label{lem: kappa-bound for a cone}
        \normalfont
        For any $(n,z)\in\mathbb{N}\times\mathbb{Z}^d$ and $\Delta\subseteq\mathbb{N}\times\mathbb{Z}^d$ such that $d_1(\Vec{z},\Delta)>1$, $\Delta\subseteq C(n,z,\gamma,\zeta)$ for some $\gamma\in\mathbb{S}^d$ and positive $\zeta<\zeta_0$, then there exists sufficiently large $\kappa=\kappa(C,g)>0$ such that
    \begin{equation}\label{eqn: kappa-correlation condition}
        \mathbb{E}\big[e^{\beta\omega_{n,z}+\sum_{\nu\in I} \beta\omega_{\nu} }\big]\leq \kappa \vdot \mathbb{E}\big[e^{\beta\omega_{n,z}}\big]\mathbb{E}\big[e^{\sum_{\nu\in I} \beta\omega_{\nu} }\big],\qquad\forall~\beta\geq0\quad\text{and}\quad\text{finite}~I\subseteq\Delta.
    \end{equation}
    \end{lemma}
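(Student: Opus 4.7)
The plan is to condition on the exterior of a small buffer $V := \{w \in \mathbb{N} \times \mathbb{Z}^d : d_1(w, (n,z)) \leq 1\}$ around $(n,z)$ and then invoke the Markov property together with $\textbf{(TC)}_{C,g}$ (respectively $\textbf{(TCG)}_{C,g}$) to control the resulting conditional moment generating function uniformly. Since $d_1((n,z), \Delta) > 1$ forces $I \subseteq V^c$, the exponential $e^{\sum_{\nu \in I} \beta \omega_\nu}$ is $\mathscr{F}_{V^c}$-measurable, so the tower property gives
\[
    \mathbb{E}\bigl[e^{\beta\omega_{n,z} + \sum_{\nu \in I}\beta\omega_\nu}\bigr] = \mathbb{E}\Bigl[e^{\sum_{\nu\in I}\beta\omega_\nu}\,\mathbb{E}[e^{\beta\omega_{n,z}} \mid \mathscr{F}_{V^c}]\Bigr],
\]
so the proof reduces to the $\mathbb{P}$-a.s.\ pointwise bound $\mathbb{E}[e^{\beta\omega_{n,z}} \mid \mathscr{F}_{V^c}] \leq \kappa \,\mathbb{E}[e^{\beta\omega_{n,z}}]$ for some $\kappa = \kappa(C, g)$.

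To establish the pointwise bound, I would first invoke the Markov property of $\omega$ (part of the standing assumption) to identify $\mathbb{E}[e^{\beta\omega_{n,z}} \mid \mathscr{F}_{V^c}] = \mathbb{E}[e^{\beta\omega_{n,z}} \mid \mathscr{F}_{\partial V}]$, a function of only the finitely many boundary values $(\omega_w)_{w \in \partial V}$. Next, I would apply $\textbf{(TC)}_{C,g}$ (resp.\ $\textbf{(TCG)}_{C,g}$) with the choice $\Delta = \{(n,z)\}$, the same $V$, and $A = \partial V$. Because both $\partial\{(n,z)\}$ and $\partial V$ (together with its boundary $\partial(\partial V)$) are finite with $d$-dependent cardinality, the associated mixing sum—either $\sum_{(m,x) \in \partial\{(n,z)\},\,(k,y) \in \partial(\partial V)} e^{-g|x-y|_1 - g|m-k|}$ for (TC), or $\sum_{(k,y) \in \partial V} e^{-g|z-y|_1 - g|n-k|}$ for (TCG)—is bounded by some absolute constant $S_0 = S_0(g, d)$, yielding
\[
    \frac{d\mathbb{P}(\omega_{n,z} \in \cdot \mid \omega_{\partial V} = y)}{d\mathbb{P}(\omega_{n,z} \in \cdot \mid \omega_{\partial V} = y')} \leq e^{CS_0} \quad \text{for all } y, y'.
\]
Averaging $y'$ against the marginal law of $\omega_{\partial V}$ turns the denominator into $d\mathbb{P}(\omega_{n,z} \in \cdot)$, giving $d\mathbb{P}(\omega_{n,z} \in \cdot \mid \omega_{\partial V} = y) \leq e^{CS_0}\,d\mathbb{P}(\omega_{n,z} \in \cdot)$; integrating against $x \mapsto e^{\beta x}$ completes the proof with $\kappa := e^{CS_0}$.

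The main obstacle is the interplay between the Markov reduction—which requires $V$ to be a finite insulating window—and the mixing condition—which requires $A$ to be a manageable subset of $V^c$; both constraints are met by $A = \partial V$, a finite set of $d$-dependent size. Interestingly, the cone hypothesis $\Delta \subseteq C(n,z,\gamma,\zeta)$ does not enter the argument sketched above, and our constant $\kappa$ depends only on $C$, $g$, and $d$; it is stated in the lemma to match the downstream context of Section~\ref{sec: auxiliary fields}, where $I$ arises as a trajectory segment of the walk $(S_k)_k$ and automatically lies in a space-time cone with tip at $(n,z)$. An alternative route, applying $\textbf{(TCG)}_{C,g}$ directly with $A = I$ instead of $A = \partial V$, would genuinely exploit the cone, reducing the estimate to the shell-by-shell summability of $\sum_{(k,y) \in I} e^{-g\,d_1((n,z),(k,y))}$ exactly as in (\ref{eqn: estimate series2, SMG}) of Lemma~\ref{lem: tau mixing inequality}.
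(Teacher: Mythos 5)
Your proposal is correct, and it takes a genuinely different route from the paper. The paper's proof conditions on $\mathscr{F}_I$ directly and applies $\textbf{(TC)}_{C,g}$ (resp.\ $\textbf{(TCG)}_{C,g}$) with $A$ taken to be (essentially) the space-time cone $C(n,z,\gamma,\zeta)$ containing $I$; the resulting mixing sum runs over the infinite cone boundary (resp.\ the whole cone), and the positive opening angle $\zeta>0$ is precisely what guarantees convergence of that infinite series, yielding $\kappa=\kappa(C,g)$. You instead interpose the single-site window $V=\{w : d_1(w,(n,z))\leq 1\}$, invoke the Markov property to collapse $\mathscr{F}_{V^c}$ to $\mathscr{F}_{\partial V}$, and apply the mixing condition with the \emph{finite} set $A=\partial V$, so the relevant sum is a finite sum over nearest-neighbour shells of bounded, $d$-dependent cardinality and there is nothing to converge. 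The averaging step — integrating the denominator $y'$ against the marginal of $\omega_{\partial V}$ to recover the unconditional law — is correct and is again justified by the Markov reduction which makes $\mathbb{P}(\omega_{n,z}\in\cdot\,|\,\eta)$ a function of $\eta_{\partial V}$ only. What your route buys is simplicity and a sharper take-away: as you observe, the cone hypothesis is not actually needed for the inequality (\ref{eqn: kappa-correlation condition}) as stated, and the constant $\kappa$ depends only on $(C,g,d)$ through the finite perimeter of a unit ball. What the paper's route buys is uniformity in a different sense: by bounding the conditional expectation against the \emph{entire} cone rather than a one-step shell, it produces exactly the form of estimate reused verbatim in the proofs of Lemma \ref{lem: tau mixing inequality} and Lemma \ref{lem: annealed free energy is differentiable}, where the conditioning sets grow and the cone summability does the real work. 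Both proofs are valid; yours is the more economical one for this particular statement.
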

    \begin{proof}
        It is obvious that 
        \[
            \mathbb{E}\big[e^{\beta\omega_{n,z}+\sum_{\nu\in I} \beta\omega_{\nu} }\big] = \mathbb{E}\big[ \mathbb{E}[e^{\beta\omega_{n,z}}|\mathscr{F}_I]\vdot e^{\sum_{\nu\in I} \beta\omega_{\nu}} \big].
        \]
        Under $(\mathbf{TC})_{C,g}$, since $d_1(\Vec{z},I)>0$, we have
        \[
            \mathbb{E}\big[e^{\beta\omega_{n,z}}\big|\mathscr{F}_I\big] \leq \exp\bigg(C\sum_{(m,x)\sim\Vec{z}} \sum_{(k,y)\in\partial C(n,z,\gamma,\zeta)} e^{-g\abs{x-y}_1-g\abs{m-k}} \bigg) \mathbb{E}\big[e^{\beta\omega_{n,z}}\big] \leq K(C,g) \mathbb{E}\big[e^{\beta\omega_{n,z}}\big],
        \]
        for some $K(C,g)<\infty$. Notice that the positive angle $\zeta$ of the cone ensures the summability of the series. Similarly, under $(\textbf{TCG})_{C,g}$ we have
        \[
            \mathbb{E}\big[e^{\beta\omega_{n,z}}\big|\mathscr{F}_I\big] \leq \exp\bigg(C \sum_{(k,y)\in C(n,z,\gamma,\zeta)} e^{-g\abs{z-y}_1-g\abs{n-k}} \bigg) \mathbb{E}\big[e^{\beta\omega_{n,z}}\big] \leq K^\prime(C,g) \mathbb{E}\big[e^{\beta\omega_{n,z}}\big],
        \]
        for some other $K^\prime(C,g)<\infty$. And then the assertion follows.
    \end{proof}

\section{Law of large numbers}\label{appendix a}
    Recall the splitting representation: If $\Bar{X},\Tilde{X}$ are random variables of law $\Bar{P},\Tilde{P}$ with $\normx{\Bar{P}-\Tilde{P}}_{FV}\leq a<1$, then on an enlarged probability space there exists independent $Y,\delta,Z,\Tilde{Z}$, where $\Delta\sim\text{Bernoulli}(a)$ and
    \begin{equation}\label{eqn: A eqn1}            
        \Bar{X}=(1-\Delta)Y+\Delta Z\qquad\text{and}\qquad\Tilde{X}=(1-\Delta)Y+\Delta\Tilde{Z}.
    \end{equation}
    For a proof, see \cite[Appendix A.1]{Barbour/Holst/Janson}. Although, the exact form of $Y,Z$ are complicated, we nevertheless have the estimates $\Bar{X}=(1-\Delta)\Tilde{X}+\Delta Z$, $|\Delta Z|\leq|\Bar{X}|$ and $|\Delta\Tilde{Z}|\leq|\Tilde{X}|$. Note that by recursive conditioning, this result extends to random sequences.
    \begin{lemma}\label{lem: A1}
        \normalfont
        Given a random sequence $(X_i)_{i\geq1}$ with law $P$ such that for some probability measure $Q$,
        \[
            \normy{P(\Bar{X}\in\vdot|\Bar{X}_j,\,j<i)-Q }_{FV}\leq a <1.
        \]
        Then there exists an i.i.d sequence $(\Tilde{X}_i,\Delta_i)_{i\geq1}$ such that $\Tilde{X}_1\sim Q$ and $\Delta_1\sim\text{Bernoulli}(a)$ on an enlarged probability space, as well as a sequence $(Z_i)_{i\geq1}$ with $\Delta_i$ independent of $\sigma(\Tilde{X}_j,\Delta_j:\,j<i)\vee\sigma(Z_i)$ and
        \[
            \Tilde{X}_i=(1-\Delta_i)\Tilde{X}_i + \Delta_iZ_i,\qquad\forall~i\geq1.
        \]
    \end{lemma}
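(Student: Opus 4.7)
The plan is to iterate the basic splitting representation (4.1) conditionally at each time step $i$, using fresh external randomness to build the auxiliary variables while keeping the correct independence structure. Concretely, I would enlarge the probability space carrying $(X_i)_{i\geq 1}$ by adjoining an independent i.i.d.\ sequence $(U_i)_{i\geq 1}$ of uniform $[0,1]$ seeds; at step $i$ the variable $U_i$ plays the role of the external randomization used to implement the Bernoulli split.

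At step $i$, let $\mathcal{F}_{i-1}=\sigma(X_j:j<i)$ and consider the regular conditional law $P_i(\cdot):=P(X_i\in\cdot\,|\,\mathcal{F}_{i-1})$. The hypothesis gives $\|P_i-Q\|_{FV}\leq a$ almost surely, so applying (4.1) pointwise in $\omega$ (with $\bar P=P_i$, $\tilde P=Q$) and using $U_i$ as the source of randomness produces variables $(Y_i,\Delta_i,Z_i,\tilde X_i)$ that are conditionally independent given $\mathcal{F}_{i-1}$ and satisfy
\begin{equation*}
X_i=(1-\Delta_i)Y_i+\Delta_i Z_i,\qquad \tilde X_i=(1-\Delta_i)Y_i+\Delta_i\tilde Z_i,
\end{equation*}
with $\Delta_i\sim\mathrm{Bernoulli}(a)$ and $\tilde X_i\sim Q$ conditionally on $\mathcal{F}_{i-1}$. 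The same manipulation used in the display after (4.1) then yields $X_i=(1-\Delta_i)\tilde X_i+\Delta_i Z_i$, which is the identity claimed in the lemma.

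Two distributional facts now need to be extracted. First, because the conditional laws of $\tilde X_i$ and $\Delta_i$ are the fixed distributions $Q$ and $\mathrm{Bernoulli}(a)$, they do not depend on the past; hence $\tilde X_i$ and $\Delta_i$ are in fact unconditionally independent of $\mathcal{F}_{i-1}$. Second, since each step uses a fresh seed $U_i$ independent of $\mathcal{F}_{i-1}$ and of $(U_j)_{j<i}$, and since $(\tilde X_j,\Delta_j)_{j<i}$ is measurable with respect to $\mathcal{F}_{i-1}\vee\sigma(U_j:j<i)$, an easy induction shows $(\tilde X_i,\Delta_i)$ is independent of $\sigma(\tilde X_j,\Delta_j:j<i)$. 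Iterating over $i$ gives the i.i.d.\ property of $(\tilde X_i,\Delta_i)_{i\geq 1}$.

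The delicate point — and what I would flag as the main obstacle — is the required independence of $\Delta_i$ from $\sigma(\tilde X_j,\Delta_j:j<i)\vee\sigma(Z_i)$. The variable $Z_i$ is not independent of $\mathcal{F}_{i-1}$ (its law depends on the conditional distribution $P_i$), so one cannot simply argue by product structure across steps. The correct argument uses that inside the single-step splitting (4.1), $\Delta_i$ is built to be independent of the triple $(Y_i,Z_i,\tilde Z_i)$ given $\mathcal{F}_{i-1}$; combined with independence of $\Delta_i$ from $\mathcal{F}_{i-1}$, this upgrades to joint independence of $\Delta_i$ from $\sigma(\tilde X_j,\Delta_j:j<i)\vee\sigma(Z_i)$. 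Verifying this step cleanly is essentially bookkeeping about conditional independence, but it is the only place where one must look inside the black-box of (4.1) rather than merely quoting it.
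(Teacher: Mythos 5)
Your proposal is correct in spirit and takes the same route as the paper: the paper's own proof is a one-line induction that applies the single-step splitting representation (4.1) conditionally with laws $P(\bar X_i\in\cdot\,|\,\bar X_j, j<i)$ and $Q$, citing Berbee and Thorisson for the details; you reconstruct exactly that argument, making the enlarged probability space concrete via independent uniform seeds $(U_i)$ and extracting the independence relations step by step. Your discussion of the ``delicate point'' --- that $\Delta_i$ must be independent of $\sigma(\tilde X_j,\Delta_j:j<i)\vee\sigma(Z_i)$, and that this follows by combining conditional independence of $\Delta_i$ from $(Y_i,Z_i,\tilde Z_i)$ inside the single-step split with independence of $\Delta_i$ from $\mathcal G_{i-1}:=\mathcal F_{i-1}\vee\sigma(U_j:j<i)$ --- is exactly the bookkeeping the paper leaves to the references, and you carry it out correctly.

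One caution, which is present in the paper's statement as well and which you pass over: the deduction that the \emph{pairs} $(\tilde X_i,\Delta_i)_{i\geq1}$ are i.i.d.\ needs the \emph{joint} conditional law of $(\tilde X_i,\Delta_i)$ given $\mathcal G_{i-1}$ to be a fixed distribution; knowing that the marginal conditional laws of $\tilde X_i$ and of $\Delta_i$ are $Q$ and $\mathrm{Bernoulli}(a)$ is strictly weaker. With the minimal splitting, $\mathrm{Law}(\tilde X_i\,|\,\Delta_i=0)=\nu_i$ and $\mathrm{Law}(\tilde X_i\,|\,\Delta_i=1)=\tilde\mu_i$, where $\nu_i,\tilde\mu_i$ depend on $P_i\wedge Q$ and hence on the (random) conditional law $P_i$; they need not coincide and need not equal $Q$ unless $(1-a)Q\leq P_i$, so the joint law is in general $\mathcal G_{i-1}$-dependent. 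What the construction does deliver cleanly is that the sequence $(\tilde X_i)_{i\geq1}$ is i.i.d.\ with law $Q$, the sequence $(\Delta_i)_{i\geq1}$ is i.i.d.\ $\mathrm{Bernoulli}(a)$, and the stated conditional-independence of $\Delta_i$ holds --- which is all that is actually used later in Lemma~\ref{lem: law of large numbers}. It would sharpen your write-up (and, arguably, the paper's) to phrase the conclusion in this weaker but accurate form, or else to note explicitly what additional structure on the conditional laws $P_i$ would make $\tilde X_i\perp\Delta_i$ achievable.
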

    \begin{proof}
        Suppose the assertion has been verified for $i-1$. Apply (\ref{eqn: A eqn1}) with laws $P(\Bar{X}_i\in\vdot|\Bar{X}_j,\,j<i)$ and $Q$, we obtain of $\Delta_i,Z_i,\Tilde{X}_i$ with desired properties. See also \cite[Lemma 2.1]{Berbee} and \cite[Chapter 3]{Thorisson}.
    \end{proof}
    We establish now the law of large numbers for the moving average of the polymer process, which facilitates the overall understanding of the limiting behavior of the directed polymer in high-dimensions.
    \begin{lemma}\label{lem: law of large numbers}
        \normalfont
        For fixed $\beta>0$, there exists a deterministic limit $\ell\in\mathbb{R}$ such that
        \[
            \lim_{N\to\infty}\frac{1}{N}\sum_{k=1}^N \eta_{k,S_k} = \lim_{N\to\infty}\frac{1}{N}\sum_{k=1}^N \omega_{k,S_k}=\ell,\qquad\mathbb{P}\otimes P^S_0\otimes Q\text{-a.s.}
        \]
    \end{lemma}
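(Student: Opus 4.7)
The plan is to leverage the renewal structure generated by $(\tau^{(L)}_n)_{n\geq0}$, use the mixing estimate of Lemma \ref{lem: tau mixing inequality} to couple the block sums to an iid sequence via Lemma \ref{lem: A1}, and then apply the classical strong law; the convergence for $\omega$ will be deduced from that for $\eta$ by a conditional Hoeffding argument in the $\epsilon$-variable. Throughout, I fix $L$ large enough that $a_L \coloneqq 1 - e^{-e^{-gtL}}$ is small and both Lemmas \ref{lem: tau mixing inequality} and \ref{lem: A1} apply. Set $Y_n \coloneqq \sum_{k=\tau^{(L)}_{n-1}+1}^{\tau^{(L)}_n} \eta_{k,S_k}$ and $M_N \coloneqq \max\{n : \tau^{(L)}_n \leq N\}$, so that $\sum_{k=1}^N \eta_{k,S_k} = \sum_{n=1}^{M_N} Y_n + \mathcal{R}_N$, with $\mathcal{R}_N$ collecting the at most $\tau^{(L)}_{M_N+1} - \tau^{(L)}_{M_N}$ incomplete-block boundary terms. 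The $p$-th moment bounds of Lemma \ref{lem: tau lower bound} together with (\ref{eqn: finit exponential moment of omega}) give $E[|Y_1|^p] < \infty$ for every $p\geq1$, so $N^{-1}\mathcal{R}_N \to 0$ almost surely by a first-moment Borel--Cantelli.

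The multiplicative ratio bound of Lemma \ref{lem: tau mixing inequality} (applied to bounded measurable $f$ and extended to indicators via a monotone class argument) translates into the total-variation estimate
\[
\big\|\mathbb{E}\overline{E}^S_0[Y_n \in \,\cdot\, \mid \mathscr{G}_{n-1}] - \mathbb{E}\overline{E}^S_0[Y_1 \in \,\cdot\,]\big\|_{\mathrm{TV}} \leq a_L,
\]
so Lemma \ref{lem: A1} produces, on an enlarged space, an iid copy $(\tilde{Y}_n)$ with $\tilde{Y}_1 \stackrel{d}{=} Y_1$ such that $Y_n = \tilde{Y}_n$ off an event of probability $\leq a_L$. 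Kolmogorov's SLLN yields $M^{-1}\sum_{n=1}^M \tilde{Y}_n \to E[Y_1]$, and the coupling discrepancy $M^{-1}\sum_{n=1}^M (Y_n - \tilde{Y}_n)$ is controlled by Cauchy--Schwarz through the $\{0,1\}$-valued disagreement indicators and the $L^2$ moments of $Y_1$, producing an error of order $a_L^{1/2}$. A parallel renewal SLLN (apply Lemma \ref{lem: A1} to $\tau^{(L)}_n - \tau^{(L)}_{n-1}$) gives $M_N/N \to 1/E[\tau^{(L)}_1]$, and combining yields
\[
\frac{1}{N}\sum_{k=1}^N \eta_{k,S_k} \xrightarrow[]{N\to\infty} \ell_L \coloneqq \frac{E[Y_1]}{E[\tau^{(L)}_1]} + O(a_L^{1/2}), \qquad \mathbb{P}\otimes P^S_0 \otimes Q\text{-a.s.}
\]
Because the left side is independent of $L$ and $a_L \to 0$ as $L \to \infty$, the constants $\ell_L$ must stabilize at a common intrinsic limit $\ell \in \mathbb{R}$.

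For the passage from $\eta$ to $\omega$, the key identity is $\eta_{k,S_k} - \omega_{k,S_k} = \omega_{k,S_k}(2\mathbbm{1}_{\{\epsilon_k=0\}}-1)$, so by the independence of $\epsilon$ from $(\omega,S)$, conditioning on $(\omega,S)$ turns $N^{-1}\sum_{k=1}^N \omega_{k,S_k}(2\mathbbm{1}_{\{\epsilon_k=0\}}-1)$ into a Rademacher-weighted average. Hoeffding's inequality in $Q$ gives
\[
Q\Big(\Big|\tfrac{1}{N}\sum_{k=1}^N \omega_{k,S_k}(2\mathbbm{1}_{\{\epsilon_k=0\}}-1)\Big| > \delta \,\Big|\, \omega,S\Big) \leq 2\exp\!\Big(-\frac{N\delta^2}{2\,N^{-1}\sum_{k=1}^N \omega_{k,S_k}^2}\Big),
\]
and since $N^{-1}\sum_{k=1}^N \omega_{k,S_k}^2 = O(1)$ $\mathbb{P}\otimes P^S_0$-a.s.\ (obtained by re-running the block argument with $\omega^2$ in place of $\eta$), Borel--Cantelli forces the difference to vanish almost surely. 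Hence $N^{-1}\sum_{k=1}^N \omega_{k,S_k}$ shares the limit $\ell$.

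The hard part is the coupling estimate: because the blocks $Y_n$ are unbounded, the Berbee error cannot be absorbed by a sup norm and must be estimated through the joint moments of $(Y_1,\tau^{(L)}_1)$. This forces a delicate interplay between the $p$-th moment control of Lemma \ref{lem: tau lower bound} (governing $\tau^{(L)}_1$ modulo the $4^{-L}$ rescaling) and the exponential moment bound (\ref{eqn: finit exponential moment of omega}) on $\omega$. A secondary subtlety is that the constants $\ell_L$ a priori depend on the auxiliary scale $L$; it is precisely the $L$-freedom of the left-hand side that promotes the family $\{\ell_L\}$ into a single intrinsic limit $\ell$.
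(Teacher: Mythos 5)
Your treatment of the $\eta$-LLN part follows the paper's strategy closely: block decomposition along the $(\tau^{(L)}_n)$ renewal times, the mixing inequality of Lemma~\ref{lem: tau mixing inequality} converted into a total-variation bound, Berbee splitting via Lemma~\ref{lem: A1}, the iid SLLN, and stabilization of the $L$-indexed constants as $L\to\infty$. So up to that point the proposal mirrors the paper. There are two places where you diverge, and each has an issue.

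First, the control of the coupling discrepancy. Your Cauchy--Schwarz reduction gives, after using $Y_n-\tilde Y_n=\Delta_n(Z_n-\tilde Y_n)$,
\[
\Big|\tfrac1M\sum_{n\le M}\Delta_n(Z_n-\tilde Y_n)\Big|\le\Big(\tfrac1M\sum_{n\le M}\Delta_n\Big)^{1/2}\Big(\tfrac1M\sum_{n\le M}\Delta_n(Z_n-\tilde Y_n)^2\Big)^{1/2},
\]
and while the first factor indeed tends to $a_L^{1/2}$, the second factor is not controlled: bounding $\Delta_n(Z_n-\tilde Y_n)^2$ by $C(Y_n^2+\tilde Y_n^2)$ reintroduces an empirical second moment of the non-iid $Y_n$'s, which is the kind of quantity the lemma is being proved to control in the first place. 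The paper avoids exactly this by splitting $\Delta_i Z_i$ into a compensator plus a centered martingale, bounding the conditional moments by (\ref{eqn: A2 eqn}), and invoking Burkholder--Gundy and Kronecker. That detour is not cosmetic; it is what breaks the self-reference.

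Second, and more seriously, the passage from $\eta$ to $\omega$. Your Rademacher identity $\eta_{k,S_k}-\omega_{k,S_k}=\omega_{k,S_k}(2\mathbbm{1}_{\{\epsilon_k=0\}}-1)$ and the conditional Hoeffding bound are fine in principle, but Hoeffding requires a.s.\ control of $N^{-1}\sum_{k\le N}\omega_{k,S_k}^2$, and your proposed justification---``re-running the block argument with $\omega^2$ in place of $\eta$''---is circular. The block machinery is keyed to the auxiliary field $\eta$ built from $\epsilon$; the analogue for $\omega^2$ would be $\eta''_{n,z}=2\omega_{n,z}^2\mathbbm{1}_{\{\epsilon_n=0\}}$ with $E_Q[\eta'']=\omega^2$, at which point one must pass from $\eta''$ back to $\omega^2$, the very problem being outsourced. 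It also does not help to transplant the paper's tail bound, since the estimate $\mathbb{E}P^S_0\big(\tfrac1n\sum|\omega_{k,S_k}|\ge M\big)\le K^ne^{-Mn}$ rests on $\mathbb{E}[e^{|\omega_{1,0}|}]<\infty$; the corresponding bound for $\omega^2$ would need $\mathbb{E}[e^{\omega_{1,0}^2}]<\infty$, which the standing hypothesis~(\ref{eqn: finit exponential moment of omega}) does not guarantee. The paper sidesteps this entirely: it only uses the first-moment control $\tfrac1n\sum|\omega_{k,S_k}|\le M$ eventually a.s.\ (which does follow from the exponential tail), notes $|\eta|\le2|\omega|$, and then applies dominated convergence in the $Q$-variable, using $E_Q[\eta_{k,S_k}]=\omega_{k,S_k}$. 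Your Hoeffding route can be salvaged---for instance by a polynomial-moment Borel--Cantelli showing $N^{-1}\sum\omega_{k,S_k}^2=O(N^{\varepsilon})$ a.s.\ for every $\varepsilon>0$, or by truncating $\omega$ at level $R$ and handling the tail via the paper's exponential estimate---but as stated the step is unjustified and the fix is not free.
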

    \begin{proof}
        For each $i\geq1$, let $\Bar{X}^{(L)}_i\coloneqq \sum_{k=\tau^{(L)}_{i-1}+1}^{\tau^{(L)}_i} 4^{-L}\eta_{k,S_k}$ and $\Bar{\tau}^{(L)}_i\coloneqq 4^{-L}(\tau^{(L)}_i-\tau^{(L)}_{i-1})$, and let $\mu^{(L)}(\vdot)$ denote the law of $\Bar{X}^{(L)}_1$. By Lemma \ref{lem: tau mixing inequality}, it is easily seen for any $k\geq2$ that
        \[
            \abs{ \mathbb{E}\overline{E}^S_0\big[ \Bar{X}^{(L)}_k\in A\big|\mathscr{G}_{k-1} \big] - \mu^{(L)}(A)  }\leq\psi_L<1,\qquad\forall~\text{Borel}~A\subseteq\mathbb{R},
        \]
        where $\psi_L\coloneqq\exp(e^{-gtL})-1$. Taking supremum over all Borel sets $A\subseteq\mathbb{R}$, we get
        \[
            \normy{ \mathbb{E}\overline{E}^S_0\big[\Bar{X}^{(L)}_k\in\vdot\big|\mathscr{G}_{k-1}\big] - \mu^{(L)}(\vdot) }_{FV} \leq \psi_L,\qquad\forall~k\geq2.
        \]
        Invoking Lemma \ref{lem: A1}, we find the i.i.d. sequence $(\Tilde{X}^{(L)}_i,\Delta^{(L)}_i)_{i\geq1}$ so that $\Tilde{X}^{(L)}_1\sim\mu^{(L)}$, $\Delta^{(L)}_1\sim\text{Bernoulli}(\psi_L)$, along with the other sequence $(Z^{(L)}_i)_{i\geq1}$ satisfying
        \[
            \Bar{X}^{(L)}_i=(1-\Delta^{(L)}_i)\Tilde{X}^{(L)}_i+\Delta^{(L)}_i Z^{(L)}_i.
        \]
        We also write the enlarged $\sigma$-algebra $\Tilde{\mathscr{G}}_i\coloneqq\mathscr{G}_i\vee\sigma(\Tilde{X}^{(L)}_j,\Delta^{(L)}_j:\,j\leq i)$. Notice that we also have $|\Delta^{(L)}_i Z^{(L)}_i|\leq |\Bar{X}^{(L)}_i|$ for each $i\geq1$. Henceforth, by Hölder's inequality and for all real and even $p>1$,
        \begin{equation}\label{eqn: A2 eqn}
            \psi_L\mathbb{E}\overline{E}^S_0\big[(Z^{(L)}_i)^p\big|\Tilde{\mathscr{G}}_{i-1}\big] \leq \mathbb{E}\overline{E}^S_0\big[(\Delta^{(L)}_i Z^{(L)}_i)^p\big|\Tilde{\mathscr{G}}_{i-1}\big] \leq 2K(p) 2^{-pL }\exp(e^{-gtL}) E_Q\big[(\tau^{(L)}_1)^p\big],\;\;\mathbb{P}\otimes P^S_0\otimes Q\text{-a.s.,}
        \end{equation}
        where $K(p)\coloneqq \max\{\mathbb{E}[e^{2p\omega_{1,0}}],\mathbb{E}[e^{-2p\omega_{1,0}}]\}+1<\infty$ and we have used the fact that
        \begin{equation*}\begin{aligned}
            &\mathbb{E}\overline{E}^S_0\big[(\Bar{X}^{(L)}_i)^p\big] \leq \mathbb{E}\overline{E}^S_0\bigg[ \big( \sum_{k=\tau^{(L)}_{i-1}+1}^{\tau^{(L)}_i} e^{\eta_{k,S_k}} \big)^p \bigg]\\ 
            &\quad \leq 2^{-pL} E_Q\bigg[ (\tau^{(L)}_{i}-\tau^{(L)}_{i-1})^{p-1} \sum_{p^\prime=\pm p} \sum_{k=\tau^{(L)}_{i-1}+1}^{\tau^{(L)}_i} \mathbb{E}E^S_0\big[ e^{p^\prime\eta_{k,S_k}}\big| \sigma(\tau^{(L)}_j:\,j\leq i) \big] \bigg]  \leq 2E_Q\big[(\Bar{\tau}^{(L)}_1)^p\big] K(p).
        \end{aligned}\end{equation*}
        Notice that $\varlimsup_{L\to\infty} E_Q[(\Bar{\tau}^{(L)}_1)^p]<\infty$ by Lemma \ref{lem: tau lower bound}. We can express
        \begin{equation}\label{eqn: log need 1}        
            \frac{1}{n}\sum_{i=1}^n\Bar{X}^{(L)}_i=\frac{1}{n}\sum_{i=1}^n \Tilde{X}^{(L)}_i - \frac{1}{n}\sum_{i=1}^n\Delta_i^{(L)}\Tilde{X}^{(L)}_i + \frac{1}{n}\sum_{i=1}^n\Delta^{(L)}_i Z^{(L)}_i,
        \end{equation}
        where first by independence 
        \[
            \frac{1}{n}\sum_{i=1}^n\Tilde{X}^{(L)}_i\xrightarrow[]{n\to\infty}\gamma_L,\qquad\text{where}\quad\gamma_L\coloneqq \mathbb{E}\overline{E}^S_0[\Tilde{X}^{(L)}_1],\qquad\mathbb{P}\otimes P^S_0\otimes Q\text{-a.s.}
        \]
        Meanwhile, for any conjugate $p,q>1$ with $\frac{1}{p}+\frac{1}{q}=1$,
        \[
            \varlimsup_{n\to\infty}\bigg|\frac{1}{n}\sum_{i=1}^n \Delta^{(L)}_i\Tilde{X}^{(L)}_i\bigg|\leq\varlimsup_{n\to\infty} \bigg|\frac{1}{n}\sum_{i=1}^n (\Delta^{(L)}_i)^p \bigg|^{1/p}\vdot \bigg| \frac{1}{n}\sum_{i=1}^n(\Tilde{X}^{(L)}_i)^q \bigg|^{1/q}\leq 2K(q)\psi_L^{1/p} E_Q\big[ (\Bar{\tau}^{(L)}_1)^q \big]^{1/q},
        \]
        $\mathbb{P}\otimes P^S_0\otimes Q$-a.s., where $\eta_L\xrightarrow[]{L}0$ and the last inequality is due to (\ref{eqn: A2 eqn}). Let us define $\Bar{Z}^{(L)}_i\coloneqq \mathbb{E}\overline{E}^S_0[Z^{(L)}_i|\Tilde{\mathscr{G}}_{i-1}]$ for each $i\geq1$. Observe that the process $(M^{(L)}_n)_{n\geq1}$ with each $M^{(L)}_n\coloneqq \sum_{i=1}^n i^{-1}\Delta^{(L)}_i (Z^{(L)}_i-\Bar{Z}^{(L)}_i) $ is a centered $(\Tilde{\mathscr{G}}_n)_{n\geq1}$-martingale. By the Burkholder--Gundy maximal inequality \cite[Eqn. (14.18)] {Zerner/Merkl},
        \begin{equation*}\begin{aligned}
            &\mathbb{E}\overline{E}^S_0\bigg[\big|\sup_{n\geq1} M^{(L)}_n \big|^\gamma\bigg] \leq C(\gamma) \mathbb{E}\overline{E}^S_0\bigg[\sum_{n=1}^\infty \frac{1}{n^2}\big(\Delta^{(L)}_n Z^{(L)}_n - \Delta^{(L)}_n \Bar{Z}^{(L)}_n \big)^2 \bigg]^{\gamma/2}\\ 
            &\qquad\leq C(\gamma)\sum_{n=1}^\infty\frac{1}{n^\gamma} \mathbb{E}\overline{E}^S_0\big[\big(\Delta^{(L)}_n Z^{(L)}_n - \Delta^{(L)}_n \Bar{Z}^{(L)}_n \big)^\gamma\big] \leq C^\prime(\gamma)\qquad\text{with}\quad\gamma\coloneqq p\wedge q.
        \end{aligned}\end{equation*}
        Henceforth, $M^{(L)}_n\xrightarrow[]{n} M^{(L)}_\infty$, $\mathbb{P}\otimes P^S_0\otimes Q$-a.s. with integrable limit $M^{(L)}_\infty$. By Kronecker's lemma \cite[Eqn. (12.7)]{Zerner/Merkl}, it follows that $n^{-1}\sum_{i=1}^n\Delta^{(L)}_i(Z^{(L)}_i -  \Bar{Z}^{(L)}_i)\xrightarrow[]{n}0$, $\mathbb{P}\otimes P^S_0\otimes Q$-a.s. Thus with real and even $q>1$, for any $n\geq1$, by (\ref{eqn: A2 eqn}),
        \[
            |\Bar{Z}^{(L)}_n|\leq \mathbb{E}\overline{E}^S_0\big[ (\Bar{Z}^{(L)}_n)^q\big|\Tilde{\mathscr{G}}_{n-1} \big]^{1/q} \leq K(q)\exp(q^{-1}e^{-gtL})\normx{\Bar{\tau}^{(L)}_1}_{L^q(Q)}\psi_L^{-1/q}\eqqcolon\eta_L\psi_L^{-1/q},.
        \]
        Henceforth by independence, 
        \[
            \varlimsup_{n\to\infty} \bigg|\frac{1}{n}\sum_{i=1}^n\Bar{Z}^{(L)}_i\Delta^{(L)}_i\bigg|\leq \varlimsup_{n\to\infty} \eta_L\psi_L^{-1/q}\frac{1}{n}\sum_{i=1}^n\Delta^{(L)}_i\leq \eta_L\psi_L^{1/p},\qquad\text{where}\quad\frac{1}{p}+\frac{1}{q}=1.
        \]
        Combining all the above estimates, we get
        \begin{equation}\label{eqn: A3 eqn}
            \varlimsup_{n\to\infty}\bigg|\frac{1}{n}\sum_{i=1}^n\Bar{X}^{(L)}_i-\gamma_L\bigg|\leq2\eta_L\psi_L^{1/p}\xrightarrow[]{L\to\infty}0,\qquad\mathbb{P}\otimes P^S_0\otimes Q\text{-a.s.}
        \end{equation}
        On the other hand, it is immediate that
        \[
            \frac{1}{n}\sum_{i=1}^n \Bar{\tau}^{(L)}_i\xrightarrow[]{n\to\infty} \beta_L\coloneqq E_Q\big[\Bar{\tau}^{(L)}_1\big],\qquad Q\text{-a.s.,}
        \]
        by independence. Furthermore, by Lemma \ref{lem: tau lower bound}, $E_Q[\Bar{\tau}^{(L)}_1]\geq c>0$. Therefore, together with (\ref{eqn: A3 eqn}),
        \begin{equation}\label{eqn: A4}
            \varlimsup_{n\to\infty}\bigg| \frac{1}{\tau_n^{(L)}}\sum_{k=1}^{\tau_n^{(L)}}\eta_{k,S_k}-\frac{\gamma_L}{\beta_L} \bigg|\leq\varlimsup_{n\to\infty}\bigg| \frac{n^{-1}\sum_{i=1}^n\Bar{X}^{(L)}_i}{n^{-1}\sum_{i=1}^n\Bar{\tau}^{(L)}_i} - \frac{\gamma_L}{\beta_L} \bigg| \leq C\eta_L\psi_L^{1/p},\qquad\mathbb{P}\otimes P^S_0\otimes Q\text{-a.s.}
        \end{equation}
        Following standard arguments \cite[p. 1864]{Sznitman/Zerner}, we define an increasing sequence $(k_n)_{n\geq1}$ satisfying $\tau^{(L)}_{k_n}\leq n< \tau^{(L)}_{k_n+1}$ for all $n$. Then, we can write
        \[
            \frac{1}{n}\sum_{k=1}^n\eta_{k,S_k} = \frac{k_n}{n}\vdot \frac{1}{k_n}\bigg(\sum_{k=1}^{\tau^{(L)}_{k_n}} \eta_{k,S_k} +\sum_{k=\tau^{(L)}_{k_n+1}}^n \eta_{k,S_k} \bigg).
        \]
        It is already clear that 
        \[
        \frac{k_n}{n}\xrightarrow[]{n\to\infty}4^{-L}\frac{1}{E_Q[\Bar{\tau}^{(L)}_1]},\qquad Q\text{-a.s.}
        \]
        and that
        \[
            \gamma_L-2\eta_L\psi_L^{1/p}\leq 4^{-L}\varliminf_{n\to\infty} \frac{1}{k_n} \sum_{k=1}^{\tau^{(L)}_{k_n}} \eta_{k,S_k} \leq 4^{-L}\varlimsup_{n\to\infty} \frac{1}{k_n} \sum_{k=1}^{\tau^{(L)}_{k_n}} \eta_{k,S_k} \leq \gamma_L+2\eta_L\psi_L^{1/p},\qquad\mathbb{P}\otimes P^S_0\otimes Q\text{-a.s.}
        \]
        Furthermore, let us define $\Bar{\gamma}_L\coloneqq 4^{-L} \mathbb{E}\overline{E}^S_0[\sum_{k=1}^{\tau^{(L)}_1}\abs{\eta_{k,S_k}}]$. Following an almost identical argument to the above computations, we have
        \begin{equation}\label{eqn: log need 2}        
            4^{-L}\varlimsup_{n\to\infty}\frac{1}{k_n}\sum_{k=\tau^{(L)}_{k_n+1}}^n \eta_{k,S_k} \leq 4^{-L} \varlimsup_{n\to\infty}\frac{1}{k_n} \bigg( \sum_{k=1}^{\tau^{(L)}_{k_n+1}}  - \sum_{k=1}^{\tau^{(L)}_{k_n}} \bigg)\abs{\eta_{k,S_k}} \leq \Bar{\gamma}_L + 2\eta_L\psi_L^{1/p} - (\Bar{\gamma}_L-2\eta_L\psi_L^{1/p}).
        \end{equation}
        And similarly, for the lower-bound we have
        \[
            4^{-L}\varliminf_{n\to\infty}\frac{1}{k_n}\sum_{k=\tau^{(L)}_{k_n+1}}^n \omega_{k,S_k} \geq -\Bar{\gamma}_L - 2\eta_L\psi_L^{1/p} + (\Bar{\gamma}_L+2\eta_L\psi_L^{1/p})\xrightarrow[]{L\to\infty}0,\qquad\mathbb{P}\otimes P^S_0\otimes Q\text{-a.s.}
        \]
        Combining the above estimates, we get the desired law of large numbers for $N^{-1}\sum_{k=1}^N\eta_{k,S_k}$ with the limit $\ell=\lim_{L\to\infty}\gamma_L/\beta_L$. On the other hand, for any $n\in\mathbb{N}$ and $M>0$,
        \[
            \mathbb{E}P^S_0\bigg(\frac{1}{n}\sum_{k=1}^n\abs{\omega_{k,S_k}}\geq M\bigg) \leq e^{-Mn}\mathbb{E}E^S_0\big[e^{\sum_{k=1}^n|\omega_{k,S_k}|}\big] \leq K^n e^{-Mn},
        \]
        where $K\coloneqq\kappa(\mathbb{E}[e^{\omega_{1,0}}]+\mathbb{E}[e^{-\omega_{1,0}}])$ and the last inequality is due to Lemma \ref{lem: kappa-bound for a cone}. Now we choose $M_0$ such that $M_0>\log K$. Then, for each $M\geq M_0$ and $N\in\mathbb{N}$,
        \[
            \sum_{n=N}^\infty \mathbb{E}P^S_0\bigg( \frac{1}{n}\sum_{k=1}^n \abs{\omega_{k,S_k}} \geq M \bigg) \leq \sum_{n=N}^\infty K^n e^{-Mn} \leq \frac{K^Ne^{-MN}}{1-Ke^{-M}}<1.
        \]
        Define the event $A_{N,M}\coloneqq\{n^{-1}\sum_{k=1}^n\abs{\eta_{k,S_k}}\leq 2M,\,\forall~n\geq N \}$. Then by construction (\ref{eqn: auxiliary field eta}) we have the estimate $\mathbb{E}\overline{P}^S_0(A_{N,M})\geq1- (1-Ke^{-M})^{-1}K^N e^{-MN}$. Thus, fixing $N_0$, we have $\mathbb{E}\overline{P}^S_0(\cup_{M\geq M_0}A_{N_0,M})=1$. And by the Dominated Convergence Theorem,
        \begin{equation}\label{eqn: dogdogdog}
            \lim_{N\to\infty}\frac{1}{N}\sum_{k=1}^N \omega_{k,S_k} = \lim_{N\to\infty} \frac{1}{N} \sum_{k=1}^N E_Q[\eta_{k,S_k}] = \ell,\qquad\text{conditioned on}~A_{N_0,M},\quad\forall~M\geq M_0.
        \end{equation}
        Henceforth, the above (\ref{eqn: dogdogdog}) actually holds $\mathbb{P}\otimes P^S_0$-a.s., which verifies the assertion.        
    \end{proof}
    \begin{proof}[Proof of Theorem \ref{thm: law of large numbers}]
        Immediately follows from Lemma \ref{lem: law of large numbers}.
    \end{proof}

\section{Limiting free energies}\label{sec: regularity}
    In this section, we show the existence of quenched and annealed free energies at all temperature, along with other regularities indicated in Theorem \ref{thm: quenched and annealed free energies exist}. In particular, the smoothness of the annealed free energy is deferred to Appendix \ref{lem: annealed free energy is differentiable}.
    \begin{proof}[Proof of Theorem \ref{thm: quenched and annealed free energies exist}.] We divide the proof of the theorem into several steps.\par\noindent
    \textbf{Step I.} Existence of free energies.\par\noindent
        The quenched limit $\rho(\beta)$ exists for all $\beta\geq0$ is a consequence of \cite[Theorem 2.3]{Rassoul-Agha/Seppalainen/Yilmaz} in dimension $d\geq3$. For the existence of annealed limit, for any $\beta\geq0$ we first observe that
        \begin{equation}\begin{aligned}\label{eqn: dog1}
            &\sum_{S^\prime\in\mathcal{S}_N} \mathbb{E}\big[e^{\beta\sum_{n=1}^N\omega(n,S^\prime_n)}\big]\prod_{n=1}^N P^S_0\big(S_n-S_{n-1}=S^\prime_n-S^\prime_{n-1}\big)\\
            &\quad\leq  \kappa^N\sum_{S^\prime\in\mathcal{S}_N} P^S_0\big( S_n=S^\prime_n,\,n=1,\ldots,N \big) \prod_{n=1}^N \mathbb{E}\big[e^{\beta\omega(n,S^\prime_n)}\big] \leq \kappa^N C(\beta)^N,\qquad\forall~N\geq1.
        \end{aligned}\end{equation}
        Here the first inequality is due to the $\kappa$-correlation from Lemma \ref{lem: kappa-bound for a cone}, whereas the second inequality above is due to the exponential moment condition (\ref{eqn: finit exponential moment of omega}). Here $C(\beta)=E[e^{\beta\omega_{1,0}}]$ and a similar lower-bound yields
        \[
            \log C(\beta) - \log\kappa \leq \varliminf_{N\to\infty}\frac{1}{N}\log\mathbb{E}\big[Z^{\beta,\omega}_N\big] \leq \varlimsup_{N\to\infty}\frac{1}{N}\log\mathbb{E}\big[Z^{\beta,\omega}_N\big]\leq \log C(\beta)+\log\kappa.
        \]
        Now suppose we could find $\log C(\beta) - \log\kappa\leq a < b \leq \log C(\beta) + \log\kappa$ as well as two subsequences $(N_m)_{m\geq1}$ and $(N_n)_{n\geq1}$ such that
        \[
            a=\lim_{m\to\infty}\frac{1}{N_m} \log \mathbb{E}\big[Z^{\beta,\omega}_{N_m}\big],\qquad b=\lim_{n\to\infty}\frac{1}{N_n} \log \mathbb{E}\big[Z^{\beta,\omega}_{N_n}\big].
        \]
        For any prime $p\in\mathbb{N}$, we find two sequences $(k_m)_{m\geq1}$ and $(k_n)_{n\geq1}$ such that $p k_m\leq N_m<p(k_m+1)$ and $p k_n\leq N_n<p(k_n+1)$ for each $m,n\in\mathbb{N}$. Applying Lemma \ref{lem: kappa-bound for a cone} at each time point of multiple of $p$, we get
        \[
            \lim_{m\to\infty}\frac{1}{N_m} \log \mathbb{E}\big[Z^{\beta,\omega}_{N_m}\big] \leq \frac{1}{p}\log\kappa + \frac{1}{p} \log \mathbb{E}\big[Z^{\beta,\omega}_{p}\big]
        \]
        as well as
        \[
            \lim_{n\to\infty}\frac{1}{N_n} \log \mathbb{E}\big[Z^{\beta,\omega}_{N_n}\big] \geq -\frac{1}{p}\log\kappa + \frac{1}{p} \log \mathbb{E}\big[Z^{\beta,\omega}_{p}\big].
        \]
        Letting $p$ sufficiently large with $p>2(b-a)^{-1}\log\kappa$, we observe a contradiction with the two different limits. Hence, we know the annealed limit $\lambda(\beta)$ exists on $[0,\infty)$. And as a pointwise limit of convex functions, the continuity of $\lambda(\vdot)$ follows automatically from its convexity on $[0,\infty])$.\par\noindent 
        \textbf{Step II.} We prove (\ref{eqn: another way to write rho(beta)}).\par\noindent
        To show $\rho(\beta)\leq\lambda(\beta)$ under the time-correlated field, we first notice that
        \[
            Z^{\beta,\omega}_{N+k}= Z^{\beta,\omega}_N \sum_{x\in\mathbb{Z}^d} P^S_0\big(S_N=x\big) Z^{\beta,\vartheta_{x,N}\omega}_k,
        \]
        where $\vartheta_{x,N}:\omega(\vdot)\mapsto\omega(N+\vdot,x+\vdot)$ denotes the translation map. Hence,
        \[
            \log Z^{\beta,\omega}_{N+k} \geq \log Z^{\beta,\omega}_N +\sum_{x\in\mathbb{Z}^d} P^S_0\big(S_N=x\big) \log Z^{\beta,\vartheta_{x,N}\omega}_k
        \]
        by Jensen's inequality. And then,
        \[
            \mathbb{E}\big[\log Z^{\beta,\omega}_{N+k}\big] \geq \mathbb{E}\big[\log Z^{\beta,\omega}_N\big] + \mathbb{E}\big[\log Z^{\beta,\omega}_k\big]
        \]
        due to the translation invariance of $\omega$. This implies that $\mathbb{E}[\log Z^{\beta,\omega}_N]$ is superaddictive in $N$. By the Fekete's lemma \cite[p. 86]{Rassoul-Agha/Seppalainen}, we know the following limit exists.
        \begin{equation}\label{eqn: another way to write rho(beta)}
            \lim_{N\to\infty}\frac{1}{N}\mathbb{E}\big[\log Z^{\beta,\omega}_N\big] = \sup_{N\in\mathbb{N}} \frac{1}{N}\mathbb{E}\big[\log Z^{\beta,\omega}_N\big] \coloneqq \Tilde{\rho}(\beta),\qquad\forall~\beta\geq0.
        \end{equation}\par\noindent
        \textbf{Step III.} We prove (\ref{eqn: another way to write rho(beta)}) is equal to $\rho(\vdot)$.\par\noindent
        Write the $\sigma$-algebras $\mathscr{K}_j\coloneqq \sigma( \omega_{k,z}:\,k\leq j,\,z\in\mathbb{Z}^d )$. Here, $\mathscr{K}_0$ denotes the trivial $\sigma$-algebra. We can thus express $\log Z^{\beta,\omega}_N-\mathbb{E}[\log Z^{\beta,\omega}_N]$ as a sum of martingale difference,
        \[
            \log Z^{\beta,\omega}_N- \mathbb{E}\big[\log Z^{\beta,\omega}_N\big] = \sum_{j=1}^N V_{j,N},\qquad\text{where}\quad V_{j,N}\coloneqq \mathbb{E}\big[\log Z^{\beta,\omega}_N\big|\mathscr{K}_j\big] - \mathbb{E}\big[\log Z^{\beta,\omega}_N\big|\mathscr{K}_{j-1}\big]
        \]
        for each $j\leq N$. Also, let $\hat{Z}^{\beta,\omega}_{j,N}\coloneqq E^S_0[e^{ \Delta_{j,S_j} + \beta\sum_{k\neq j} \omega_{k,S_k} }]$ where
        \[
            \Delta_{j,x}\coloneqq \tfrac{1}{2}\log\mathbb{E}\big[ e^{2\beta\omega_{j,x}} \big| \mathscr{K}_{j-1} \big] - \tfrac{1}{2}\log \mathbb{E}\big[ e^{2\beta\omega_{j,x}} \big],\qquad\forall~j\leq N.
        \]
        It is then obvious to see that
        \[
            V_{j,N} = \mathbb{E}\big[\text{log} (Z^{\beta,\omega}_N/\hat{Z}^{\beta,\omega}_{j,N})\big|\mathscr{K}_j\big] - \mathbb{E}\big[\text{log}( Z^{\beta,\omega}_N/\hat{Z}^{\beta,\omega}_{j,N})\big|\mathscr{K}_{j-1}\big],\qquad\forall~j=1,\ldots,N.
        \]
        For each $x\in\mathbb{Z}^d$, writing $\alpha_{x,j}\coloneqq P^S_0(e^{\beta\sum_{k\neq j} \omega_{k,S_k} },\,S_j=x)/\hat{Z}^{\beta,\omega}_{j,N}$ and taking $p\in\{\pm1\}$, we then have 
        \[
            \mathbb{E}\big[e^{pV_{j,N}}\big] \leq \mathbb{E}\big[ e^{2p \mathbb{E}[\text{log}(Z^{\beta,\omega}_{N}/\hat{Z}^{\beta,\omega}_{j,N})|\mathscr{K}_{j}] } \big]^{1/2} \vdot \mathbb{E}\big[ e^{-2p \mathbb{E}[\text{log}(Z^{\beta,\omega}_{N}/\hat{Z}^{\beta,\omega}_{j,N})|\mathscr{K}_{j-1}] } \big]^{1/2}\leq C(\beta)\mathbb{E}\big[(Z^{\beta,\omega}_{N}/\hat{Z}^{\beta,\omega}_{j,N})^{2p}\big]^{1/2},
        \]
        where the last inequality is due to Lemma \ref{lem: how to estimate Z and hat Z ratio}. Then,
        \begin{equation*}
            \mathbb{E}\big[e^{pV_{j,N}}\big] \leq C(\beta) \mathbb{E}\big[\big( \sum_{x\in\mathbb{Z}^d} \alpha_{x,j} e^{\beta\omega_{j,x}-\Delta_{j,x}} \big)^{2p}\big]^{1/2} \leq C(\beta) \mathbb{E}\big[ \sum_{x\in\mathbb{Z}^d} \alpha_{x,j} \mathbb{E}\big[ e^{2(\beta\omega_{j,x}-\Delta_{j,x})}\big|\mathscr{K}_{j-1} \big]^{p^2} \big]^{p/2},
        \end{equation*}
        where the last inequality is due to Jensen's inequality. Henceforth, for any $(n,z)\in\mathbb{N}\times\mathbb{Z}^d$,
        \[
            \mathbb{E}\big[e^{pV_{j,N}}\big] \leq C(\beta) \mathbb{E}\big[e^{2\beta\omega_{n,z}}\big]^{p/2}\vdot\mathbb{E}\big[\sum_{x\in\mathbb{Z}^d}\alpha_{x,j}\big]^{p/2}\leq K(\beta)<\infty,\qquad\forall~j\leq N.
        \]
        In light of Lesigne/Volný \cite[Theorem 3.2]{Lesigne/Volny} and that $\exp(\abs{V_{j,N}})\leq\exp(V_{j,N})+\exp(-V_{j,N})$, an application of the large deviation estimate for sum of martingale differences yields
        \begin{equation}\label{eqn: a particular concentration inequality for log Z(beta,N)}
            \mathbb{P}\big(\big|\log Z^{\beta,\omega}_N-\mathbb{E}[\log Z^{\beta,\omega}_N]\big|>\epsilon N\big) \leq e^{-\epsilon^{2/3}N^{1/3}/4},\qquad\forall~\epsilon>0\quad\text{and}\quad N\geq N_0,
        \end{equation}
        for some $N_0=N_0(\beta,\epsilon)\in\mathbb{N}$. From (\ref{eqn: a particular concentration inequality for log Z(beta,N)}), we invoke the Borel--Cantelli lemma, and then it tells $\rho(\beta)=\Tilde{\rho}(\beta)$ for all $\beta\geq0$. By Jensen's inequality, we then can conclude that $\rho(\beta)\leq\lambda(\beta)$ for all $\beta\geq0$.\par\noindent
        \textbf{Step IV.} Monotonicity of $\beta\mapsto\rho(\beta)-\lambda(\beta)$.\par\noindent
        The continuity of $\rho(\vdot)-\lambda(\vdot)$ on $[0,\infty)$ follows immediately from the convexity of $\lim_{N\to\infty} N^{-1}\mathbb{E}[\log Z^{\beta,\omega}_N]$ in $\beta$. So we only need to show $\beta\mapsto\rho(\beta)-\lambda(\beta)$ is non-increasing in $\beta$. Combining (\ref{eqn: L2 one}
        ) with (\ref{eqn: L2 two}), and together with
        \[
            \big|\frac{\partial}{\partial\beta}\log W^{\beta,\omega}_N \big| \leq \big| (W^{\beta,\omega}_N)^{-1} \big| \vdot \big| \frac{\partial}{\partial\beta}W^{\beta,\omega}_N \big|,\qquad\forall~\beta\geq0,
        \]
        we know that $\normx{\frac{\partial}{\partial\beta} \log W^{\beta,\omega}_N }_T$ is in $L^1(\mathbb{P})$ for each $T>0$. Here the notation $\norm{\vdot}_T$ is as in Lemma \ref{lem: L2 space for processes L and W}. Since $\log W^{\beta,\omega}_N$ is in $C^1(\mathbb{R}_+)$ in $\beta\in\mathbb{R}_+$, by Fubini's theorem we have
        \[
            \mathbb{E}\big[\log W^{\beta,\omega}_N\big] = \int_0^\beta \mathbb{E}\big[ \frac{\partial}{\partial \theta} \log W^{\theta,\omega}_N \big]\,d\theta,\qquad\forall~\beta\geq0.
        \]
        Therefore, 
        \begin{equation}\label{eqn: kkkk}        
            \frac{\partial}{\partial\beta}  \mathbb{E} \big[\log W^{\beta,\omega}_N\big] =   \mathbb{E}\big[ \frac{\partial}{\partial\beta} \log W^{\beta,\omega}_N \big],\qquad\forall~\beta\geq0\quad\text{and}\quad N\in\mathbb{N}. 
        \end{equation}
        If we define the auxiliary field $\omega^\dagger$ by $\omega^\dagger_{n,z}\coloneqq\int_{\mathbb{N}\backslash\{n\}\times\mathbb{Z}^d}\omega_{n,z}\,d\mathbb{P}$ for the time-correlated field $\omega$, the time-correlation is integrated in $\omega^\dagger$, then by Lemma \ref{lem: kappa-bound for a cone} and repeating the steps above (\ref{eqn: L2 two}), we have
        \begin{equation}\label{eqn: find a name for eqn}                    
            \mathbb{E}\big[\frac{\partial}{\partial\beta}\log W^{\beta,\omega}_N\big] \leq \kappa^{2N} E^S_0 \mathbb{E}\bigg[ \big(W^{\beta,\omega^\dagger}_N \big)^{-1}\big(\sum_{k=1}^N \omega^\dagger_{k,S_k} -\frac{1}{\mathbb{E}[Z^{\beta,\omega^\dagger}_N]}\frac{\partial}{\partial\beta} \mathbb{E}[Z^{\beta,\omega^\dagger}_N]  \big) \frac{e^{\beta \sum_{k=1}^N \omega^\dagger_{k,S_k} } } {\mathbb{E}[Z^{\beta,\omega^\dagger}_N]} \bigg],
        \end{equation}
        where $\mathbb{E}[Z^{\beta,\omega^\dagger}_N]^{-1}e^{\beta \sum_{k=1}^N \omega^\dagger_{k,S_k}}\,\mathbb{P}(d\omega)$ is product across the strips $\{n\}\times\mathbb{Z}^d$, $n=1,\ldots,N$. And whence this product probability measure satisfies the FKG inequality \cite[p. 78]{Liggett}. Because the function $\sum_{k=1}^N\omega^\dagger_{k,S_k} -\mathbb{E}[Z^{\beta,\omega^\dagger}_N]^{-1}\frac{\partial}{\partial\beta} \mathbb{E}[Z^{\beta,\omega^\dagger}_N] $ is increasing in $\omega$ and because $\mathbb{E}[Z^{\beta,\omega^\dagger}_N]^{-1}$ is decreasing, it is yielded that
        \begin{equation*}\begin{aligned}
            &E_{\mathbb{P}\otimes Q}\bigg[ \big(W^{\beta,\omega^\dagger}_N\big)^{-1} \big(\sum_{k=1}^N \omega^\dagger_{k,S_k} -\frac{1}{\mathbb{E}[Z^{\beta,\omega^\dagger}_N]}\frac{\partial}{\partial\beta} \mathbb{E}[Z^{\beta,\omega^\dagger}_N]  \big) \frac{e^{\beta \sum_{k=1}^N \omega^\dagger_{k,S_k} } }{\mathbb{E}[Z^{\beta,\omega^\dagger}_N]} \bigg]\\ 
            &\qquad\leq \mathbb{E}\bigg[ \big(W^{\beta,\omega^\dagger}_N\big)^{-1}  \frac{e^{\beta \sum_{k=1}^N \omega^\dagger_{k,S_k} } }{\mathbb{E}[Z^{\beta,\omega^\dagger}_N]} \bigg] \vdot \mathbb{E}\bigg[ \big(\sum_{k=1}^N\omega^\dagger_{k,S_k} -\frac{1}{\mathbb{E}[Z^{\beta,\omega^\dagger}_N]}\frac{\partial}{\partial\beta} \mathbb{E}[Z^{\beta,\omega^\dagger}_N]  \big) \frac{e^{\beta \sum_{k=1}^N \omega^\dagger_{k,S_k} } }{\mathbb{E}[Z^{\beta,\omega^\dagger}_N]} \bigg],
        \end{aligned}\end{equation*}
        where the right-hand side of the above expression vanishes. In light of (\ref{eqn: find a name for eqn}),
        \[
            \frac{1}{N} \mathbb{E}\big[\frac{\partial}{\partial\beta}\log W^{\beta,\omega}_N\big] \leq \kappa^{2N}\vdot0\leq0,\qquad\forall~\beta\geq0\quad\text{and}\quad N\in\mathbb{N}.
        \]
        Hence by (\ref{eqn: kkkk}), for any $0\leq\beta_1<\beta_1<\infty$,
        \[
            \mathbb{E}\big[\log W^{\beta_1,\omega}_N\big] \geq \mathbb{E}\big[\log W^{\beta_2,\omega}_N\big]\qquad\Longrightarrow\quad \lim_{N\to\infty}\frac{1}{N} \mathbb{E}\big[\log W^{\beta_2,\omega}_N\big] - \lim_{N\to\infty}\frac{1}{N} \mathbb{E}\big[\log W^{\beta_1,\omega}_N\big] \leq 0,
        \]
        verifying the assertion.
    \end{proof}
    To show the existence of the annealed free energy $\lambda(\beta)$ for any $\beta\geq0$, one could alternatively use (the more cumbersome) Bryc's formula \cite[Theorem 4.4.2]{Dembo/Zeitouni} for the existence of annealed large deviation principle with good rate function. And then use Varadhan's lemma with exponential tightness to subsequently verify the free energy exists as its Fenchel--Legendre transform. Though we do not aim to follow this path, it nevertheless sheds new light on the overall statistical mechanics structure of the directed polymer models.

\section{Localization regime}\label{sec: localization}
    Define $W^{\beta,\omega}_N\coloneqq\frac{Z^{\beta,\omega}_N}{\mathbb{E}[Z^{\beta,\omega}_N]}$, $W^{\beta,\omega(l)}_N\coloneqq\frac{Z^{\beta,\omega(l)}_N}{\mathbb{E}[Z^{\beta,\omega(l)}_N]}$ and $W^{1,\xi(l)}_N\coloneqq\frac{Z^{1,\xi(l)}_N}{\mathbb{E}E_Q[Z^{1,\xi(l)}_N]}$ for each $l\geq1$ and for all $\beta\geq0$. It is clear that $E_Q[W^{1,\xi(l)}_N] = W^{\beta,\omega(l)}_N$, $\mathbb{P}$-a.s. And we define $\mathcal{L}_{n,\beta}(\omega,\epsilon)\coloneqq W^{1,\xi(l)}_{\tau^{(L)}_n}$ for each $n\in\mathbb{N}$ with the index $l\geq1$ implicitly assumed. Now, using the above separation lemma for the time-correlation conditions, we are above to prove the existence of localization regime, i.e.~$\beta_*>0$.\par
    We will also need the \textit{approximate martingale} $\mathcal{H}_{n,\beta}(\omega,\epsilon)\coloneqq\mathcal{L}_{n,\beta}(\omega^*,\epsilon)$ for each $n\in\mathbb{N}$ and $\beta\geq0$. Here, by writing $\omega^*$ we refer to the underlying field $(\omega^*_{n,z})_{(n,z)\in\mathbb{N}\times\mathbb{Z}^d}$ obtained by $\omega^*_{n,z}\coloneqq\int_{H(k)}\omega_{n,z}\,d\mathbb{P}$ with $H(k)\coloneqq(\mathbb{N}\backslash(\tau^{(L)}_{k-1},\tau^{(L)}_k])\times\mathbb{Z}^d$ for some $k$. Namely, $\omega^*$ is created via marginally integrating the $\omega$-coordinates outside the strip $(\tau^{(L)}_{m-1},\tau^{(L)}_m]\times\mathbb{Z}^d$ when $\tau^{(L)}_{m-1}<m\leq\tau^{(L)}_m$, whence cancels the distant time-correlation effect. See also Appendix Lemma \ref{lem: tau mixing inequality} for a discussion on this constructive point of view.\par
    It is easily observed that $\mathbb{E}E_Q [\mathcal{H}_{n,\beta}|\mathscr{G}_{n-1}]=\mathcal{H}_{n-1,\beta}(\omega,\epsilon)$ for each $n\geq1$, justifying its name. And whence for this nonnegative $(\mathscr{G}_n)_{n\geq1}$-martingale $(\mathcal{H}_{n,\beta})_{n\geq1}$, we have the convergence
    \[
        \mathcal{H}_{n,\beta}(\omega,\epsilon)\xrightarrow[]{n\to\infty}\mathcal{H}_{\infty,\beta}(\omega,\epsilon),\qquad\mathbb{P}\otimes Q\text{-a.s.}
    \]
    to a nonnegative limiting $\mathcal{H}_{\infty,\beta}$. It also follows that
    \[
        \mathbb{E}E_Q[\mathcal{H}_{n,\beta}] = \mathbb{E} E_Q [\mathcal{H}_{1,\beta}] = 1,\qquad\forall~n\geq1.
    \]
    It is the next lemma where we need the transient dimension, i.e.~$d\geq3$, for the reference walk $(S_n)_{n\geq1}$. Remark that we define the continuous function $\Lambda:\beta\in[0,\infty)\mapsto \log \kappa_1(\beta)\kappa_2(\beta) + \log\mathbb{E}[e^{2\beta\omega_{1,0}}] + 2\log\mathbb{E}[e^{-\beta\omega_{1,0}}]$ in Appendix \ref{lem: function Lambda(beta)}.

    \begin{lemma}\label{lem: square-integrable martingale}
        \normalfont
        Given $d\geq3$ and under the time-correlations $(\textbf{TC})_{C,g}$ or $(\textbf{TCG})_{C,g}$, we have
        \[
            \Lambda(\beta) < K \qquad\Longrightarrow\quad \sup_{n\geq1}\mathbb{E}E_Q\big[(\mathcal{H}_{n,\beta})^2\big] < \infty,
        \]
        for some absolute constant $K>0$ specified in Appendix \ref{appendix b}.
    \end{lemma}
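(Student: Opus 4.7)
The plan is to apply the second-moment (two-replica) method, exploiting the block structure induced by the stopping times $\tau^{(L)}_n$ so that the time-correlation is effectively confined to each block. I would introduce two independent copies $S,S'$ of the reference walk under $\overline{P}^{S,S'}_0 \coloneqq Q\otimes P^S_0\otimes P^{S'}_0$ and expand
\[
\mathbb{E}E_Q[\mathcal{H}_{n,\beta}^2] \;=\; \mathbb{E}\overline{E}^{S,S'}_0\!\left[ \prod_{k=1}^{\tau^{(L)}_n} \frac{e^{\xi^l_{k,S_k} + \xi^l_{k,S'_k}}}{(\mathbb{E}E_Q[e^{\xi^l_{k,0}}])^2}\right],
\]
with the $\xi^l$ built from $\omega^{\ast}$ rather than $\omega$. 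The goal is to bound this by $C\cdot\Phi(\beta,L)^n$, with $\Phi(\beta,L)$ uniformly bounded in $L$---and in fact strictly less than $1$ after an appropriate choice of $L$---as soon as $\Lambda(\beta) < K$.

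The first step is to condition on $\mathscr{G}_{n-1}$ and peel off the last $\tau$-block by invoking Lemma \ref{lem: tau mixing inequality}. The cone constraint $S_{j+1},S'_{j+1}\in C(j,S_j,\gamma,\zeta_0)$ (jointly for both replicas) guarantees that the two trajectories during the $n$-th block sit in the union of two admissible cones rooted at the common start of the block, a region separated by at least $L$ units of time from the $\mathscr{G}_{n-1}$-past; the cone-sum computations (\ref{eqn: abc})-(\ref{eqn: def}) then go through on this union. The mixing lemma yields
\[
\mathbb{E}E_Q[\mathcal{H}_{n,\beta}^2\mid\mathscr{G}_{n-1}] \;\leq\; \exp\!\bigl(e^{-g\bar{t}L}\bigr)\cdot \Phi(\beta,L) \cdot \mathcal{H}_{n-1,\beta}^2,
\]
where $\Phi(\beta,L)$ is the single-block second-moment contribution, and iteration gives $\mathbb{E}E_Q[\mathcal{H}_{n,\beta}^2] \leq \exp\!\bigl(n e^{-g\bar{t}L}\bigr)\,\Phi(\beta,L)^n$.

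The second step is to estimate $\Phi(\beta,L)$. Using the explicit form (\ref{eqn: auxiliary field xi}) of $\xi^l$ and the identity $E_Q[e^{\xi^l_{k,z}}] = e^{\beta\omega_{k,z}(l)}$, a direct computation shows that under $\mathbb{E}\overline{E}^{S,S'}_0$ the ratio in the product equals $1$ whenever $S_k\neq S'_k$ and, on the diagonal $S_k = S'_k$, is controlled by $\mathbb{E}[e^{2\beta\omega_{1,0}}]\,\mathbb{E}[e^{-\beta\omega_{1,0}}]^2$ up to correlation corrections absorbed into the constants $\kappa_1(\beta)\kappa_2(\beta)$ delivered by Lemma \ref{lem: kappa-bound for a cone}. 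Hence each collision carries a multiplicative cost $e^{\Lambda(\beta)}$, giving
\[
\Phi(\beta,L) \;\leq\; \overline{E}^{S,S'}_0\!\Bigl[\exp\!\bigl(\Lambda(\beta)\,\#\{1\leq k\leq \tau^{(L)}_1:\,S_k=S'_k\}\bigr)\Bigr].
\]
Because $d\geq 3$, the difference walk $(S_k-S'_k)_{k\geq 1}$ is transient, so the number of collisions in a block has a finite exponential moment uniformly in $L$; one may then identify $K$ in Appendix \ref{appendix b} as the exponential growth rate of this moment generating function. For $\Lambda(\beta)<K$, a suitably large $L$ produces $\Phi(\beta,L)<1$, so $\sup_n \mathbb{E}E_Q[\mathcal{H}_{n,\beta}^2]<\infty$.

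The main obstacle I expect is the two-replica extension of Lemma \ref{lem: tau mixing inequality}: the stated version handles a single trajectory, whereas the second-moment computation forces the joint law of $(S,S')$. The cone-sum estimates (\ref{eqn: abc})-(\ref{eqn: def}) have to be rerun on the union of two admissible cones while still yielding an error of order $e^{-g\bar{t}L}$, and the $\kappa$-correlation of Lemma \ref{lem: kappa-bound for a cone} must be applied replica-by-replica so that the constants that emerge are exactly $\kappa_1(\beta)$ and $\kappa_2(\beta)$---matching the definition of $\Lambda(\beta)$. Making the joint cone geometry and the propagation of $\kappa_1\kappa_2$ through the two-replica expansion quantitative is the real work; once that is in place, transience and the smallness of $\Lambda(\beta)$ close the argument.
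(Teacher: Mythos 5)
Your high-level setup (two replicas, transience of the difference walk, collisions carrying a multiplicative cost governed by $\Lambda(\beta)$) is the right template and matches the spirit of the paper's argument. However, the concrete scheme you propose contains two errors that would sink the proof.

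First, the target bound $\mathbb{E}E_Q[\mathcal{H}_{n,\beta}^2]\leq\exp(ne^{-g\bar tL})\,\Phi(\beta,L)^n$ with $\Phi(\beta,L)<1$ cannot hold. Since $(\mathcal{H}_{n,\beta})_n$ is a nonnegative martingale normalised so that $\mathbb{E}E_Q[\mathcal{H}_{n,\beta}]=1$, Jensen's inequality gives $\mathbb{E}E_Q[\mathcal{H}_{n,\beta}^2]\geq 1$ for every $n$; in particular the per-block factor $\Phi(\beta,L)=\mathbb{E}E_Q[\mathcal{H}_{1,\beta}^2]$ is itself $\geq 1$. A geometric bound with base strictly below $1$ is therefore impossible, and the final line of your argument (``a suitably large $L$ produces $\Phi(\beta,L)<1$'') does not go through. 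Secondly, the block-by-block peeling you describe does not factorise the walk integral: after conditioning on $\mathscr{G}_{n-1}$ the conditional second moment of the $n$-th block depends on the relative displacement $S_{\tau^{(L)}_{n-1}}-S'_{\tau^{(L)}_{n-1}}$, a quantity that is \emph{not} absorbed into $\mathcal{H}_{n-1,\beta}^2$ (recall $\mathcal{H}_{n-1,\beta}$ averages over the walk, so the displacement has been integrated away). You cannot pull a single constant $\Phi(\beta,L)$ out of each block; blocks where the replicas are far apart contribute a factor close to $1$, blocks where they are close contribute a factor $\geq 1$ that can be large, and the whole point of transience is that only finitely many blocks fall into the latter category on a typical two-replica path.

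The paper's proof in Appendix \ref{appendix b} precisely avoids this pitfall: it keeps the two-replica walk expectation $E_Q E^{S\otimes\widetilde{S}}_{0,0}$ on the outside and bounds $\mathbb{E}E_Q[\mathcal{H}_{n,\beta}^2]$ by $E_Q E^{S\otimes\widetilde{S}}_{0,0}\big[\exp\big(\sum_{j}\tfrac12 V(\Vec{X}_j,\Vec{Y}_j)\big)\big]$, a collision-cost path integral that is controlled uniformly in $n$. The key tools are then (i) an extended Hölder inequality for infinite products to split the sum over blocks into distance classes $d_1(\Vec{X}_j,\Vec{Y}_j)=k$; (ii) the transience estimate of Lemma \ref{lem: bound for ball probability}, which bounds $\sum_j P(d_1(\Vec{X}_j,\Vec{Y}_j)\leq r)\leq K_1r^2$; and (iii) the Khasminskii-type Lemma \ref{lem: probability estimate}, turning that into a finite exponential moment provided $\Lambda(\beta)K_1r_0^2<\tfrac12$. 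This is also where the requirement $\Lambda(\beta)<K$ enters, not via making any per-block factor drop below~$1$. Note also that your diagonal/off-diagonal dichotomy ($S_k=S'_k$ vs.\ $S_k\neq S'_k$) is too coarse for the time-correlated field: because $\omega$ is correlated across both space and time, nearby but unequal positions of the replicas still couple through $\mathbb{E}$, which is why the paper works with the $\ell^1$-distance band $d_1(\Vec{X}_j,\Vec{Y}_j)<r_0$ for a threshold $r_0>\sqrt d$ rather than with exact collisions. Your worry about extending Lemma \ref{lem: tau mixing inequality} to two replicas is legitimate; the paper's route, via the factorisation of the $\omega$-expectation over blocks after passing to $\omega^*$ and the two-cone geometry, is the correct way to handle it, but the surrounding scaffolding must be the path-integral bound rather than a geometric recursion.
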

    \begin{proof}
        The proof of this lemma is not simple, and is deferred to Appendix \ref{appendix b}.
    \end{proof}

    \begin{lemma}\label{lem: positive H limit}
        \normalfont
        Given $d\geq3$ and under time-correlations, for any $\beta\geq0$ such that $\Lambda(\beta)<K$, we have $\mathcal{H}_{\infty,\beta}(\omega,\epsilon)>0$ and $E_Q[\mathcal{H}_{\infty,\beta}(\omega,\vdot)]>0$, $\mathbb{P}\otimes Q$-a.s.
    \end{lemma}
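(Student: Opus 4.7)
The plan is to deploy the standard $L^2$-bounded nonnegative martingale plus 0-1 law argument, adapted to the time-correlated field through the regeneration structure of Section \ref{sec: auxiliary fields}.

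First, since $\Lambda(\beta) < K$, Lemma \ref{lem: square-integrable martingale} yields $\sup_{n \geq 1}\mathbb{E}E_Q[(\mathcal{H}_{n,\beta})^2] < \infty$. The nonnegative $(\mathscr{G}_n)$-martingale $(\mathcal{H}_{n,\beta})_{n \geq 1}$ is therefore $L^2$-bounded, hence uniformly integrable, and converges to $\mathcal{H}_{\infty,\beta}$ both $\mathbb{P}\otimes Q$-a.s. and in $L^2$. Passing to the limit in $\mathbb{E}E_Q[\mathcal{H}_{n,\beta}] = 1$ gives $\mathbb{E}E_Q[\mathcal{H}_{\infty,\beta}] = 1$, so at least $\mathbb{P}\otimes Q(\mathcal{H}_{\infty,\beta} > 0) > 0$.

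Next I would upgrade this positivity to an almost sure statement by a 0-1 argument. The key observation is that for each fixed $n$, the ratio $\mathcal{H}_{N,\beta}/\mathcal{H}_{n,\beta}$ for $N > n$ can be expressed through the regeneration structure together with the $\omega^\ast$-construction in a way that depends (up to factors already controlled by $\mathcal{H}_{n,\beta}$ on the $L^2$-bounded scale) only on $(\omega_{k,z} : k > \tau^{(L)}_n,\, z \in \mathbb{Z}^d)$ and $(\epsilon_i : i > \tau^{(L)}_n)$. Letting $N \to \infty$ and then intersecting over $n$ places the event $\{\mathcal{H}_{\infty,\beta} = 0\}$ in the tail $\sigma$-algebra
\[
    \mathscr{T} \coloneqq \bigcap_{n \geq 1} \sigma\big(\omega_{k,z},\, \epsilon_i : k > \tau^{(L)}_n,\, i > \tau^{(L)}_n,\, z \in \mathbb{Z}^d\big).
\]
Under $(\textbf{TC})_{C,g}$ or $(\textbf{TCG})_{C,g}$, the Markovian field enjoys exponential decoupling across well-separated regions, which is precisely the content of Lemma \ref{lem: tau mixing inequality}; combined with the fact that $Q$ is a product measure this forces $\mathscr{T}$ to be $\mathbb{P}\otimes Q$-trivial. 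Together with the strict positivity from the first step, we conclude $\mathbb{P}\otimes Q(\mathcal{H}_{\infty,\beta} > 0) = 1$.

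For the second assertion, Fubini's theorem applied to the indicator of $\{\mathcal{H}_{\infty,\beta} > 0\}$ yields $Q(\mathcal{H}_{\infty,\beta}(\omega, \cdot) > 0) = 1$ for $\mathbb{P}$-a.e.\ $\omega$, whence $E_Q[\mathcal{H}_{\infty,\beta}(\omega, \cdot)] > 0$, $\mathbb{P}$-a.s. The hard part will be the tail measurability in the second step: writing a clean Markov-type decomposition of $\mathcal{H}_{N,\beta}/\mathcal{H}_{n,\beta}$ whose error terms remain controllable uniformly in $N$ is delicate, and this is exactly where the cone-mixing estimates and the exponential decoupling of Lemma \ref{lem: tau mixing inequality} will carry the argument.
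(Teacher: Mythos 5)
Your first step (the $L^2$ bound from Lemma \ref{lem: square-integrable martingale} together with uniform integrability and $\mathbb{E}E_Q[\mathcal{H}_{n,\beta}]=1$ giving $\mathbb{P}\otimes Q(\mathcal{H}_{\infty,\beta}>0)>0$) and your final Fubini step coincide with the paper. The divergence, and the gap, is in how you upgrade positive mass to full mass.

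The paper does not argue through tail triviality at all. It observes the recursion
\[
\mathcal{H}_{n,\beta}(\omega,\epsilon) = \sum_{k\geq1}\sum_{S^\prime\in\mathcal{S}_k}\frac{Q(\tau^{(L)}_1=k)\,P^S_0(S_j=S^\prime_j,\,j\leq k)}{\mathbb{E}E_Q[Z^{1,\xi(l)}_{\tau^{(L)}_1}]}\, e^{\sum_{j=1}^k \xi^{l,*}_{j,S^\prime_j}}\, \mathcal{H}_{n-1,\beta}\big(\vartheta_{k,S^\prime_k}\omega,\iota_k\epsilon\big),
\]
with strictly positive coefficients, so that in the limit $\{\mathcal{H}_{\infty,\beta}=0\}$ is exactly the event $\{\mathcal{H}_{\infty,\beta}\circ(\vartheta_{k,S'_k}\times\iota_k)=0 \text{ for all admissible } (k,S')\}$; this is a space-time shift-invariant event, and the Bolthausen-type $0$--$1$ law (cited as \cite{Bolthausen}) applied to $\mathbb{P}\otimes Q$ under these shifts gives $\mathbb{P}\otimes Q(\mathcal{H}_{\infty,\beta}=0)\in\{0,1\}$.

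Your tail-$\sigma$-algebra route is not a cosmetic variant of this: it is a different and, as you stand it, incomplete argument. Two concrete problems. First, the ratio $\mathcal{H}_{N,\beta}/\mathcal{H}_{n,\beta}$ cannot be ``controlled on the $L^2$-bounded scale'' because $\mathcal{H}_{n,\beta}$ appears in the denominator and has no deterministic lower bound; the $L^2$ control of Lemma \ref{lem: square-integrable martingale} bounds the numerator, not the reciprocal, so the asserted factorization of the event into a tail-measurable one plus controlled errors is not established. Second, even granting tail measurability, triviality of your $\mathscr{T}$ for a Markovian field under $(\textbf{TC})_{C,g}$ or $(\textbf{TCG})_{C,g}$ is not an immediate consequence of Lemma \ref{lem: tau mixing inequality}: that lemma gives quantitative near-independence between the $n$-th regeneration block and $\mathscr{G}_{n-1}$, i.e.\ decoupling of one block from the past, which is weaker than triviality of the remote-future $\sigma$-algebra and would require an additional $\psi$-mixing or Kolmogorov-type argument you have not written out. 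The paper's route via shift-invariance avoids both difficulties: it needs no ratio decomposition, and ergodicity under shifts is the input rather than a tail triviality you would have to derive. If you want to salvage your approach, you would have to first prove a tail-triviality statement for $\mathbb{P}\otimes Q$ from the mixing hypotheses, which is a nontrivial lemma in its own right and not in the paper.
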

    \begin{proof}
        A straightforward computation shows that
        \[
            \mathcal{H}_{n,\beta}(\omega,\epsilon) = \sum_{k\geq1}\sum_{S^\prime\in\mathcal{S}_k}\frac{Q(\tau^{(L)}_1=k)P^S_0(S_j=S^\prime_j,\,j=1,\ldots,k)}{\mathbb{E}E_Q[Z^{1,\xi(l)}_{\tau^{(L)}_1}]} e^{\sum_{j=1}^k \xi^{l,*}_{j,S^\prime_j} } \mathcal{H}_{n-1,\beta}(\vartheta_{k,S^\prime_k}\omega,\iota_k\epsilon),
        \]
        where $\vartheta$ and $\iota$ respectively denote the shift operators on $\omega$ and $\epsilon$. Henceforth, the event $\{\mathcal{H}_{\infty,\beta}=0\}$ is translation invariant and thus, see \cite{Bolthausen},
        \begin{equation}\label{eqn: dog??}
            \mathbb{P}\otimes Q(\mathcal{H}_{\infty,\beta}=0)\in\{0,1\},\qquad\forall~\beta\geq0.
        \end{equation}
        Applying Lemma \ref{lem: square-integrable martingale} to $\beta\geq0$ satisfying $\Lambda(\beta)< K$, we know $(\mathcal{H}_{n,\beta})_{n\geq1}$ is a square-integrable $(\mathscr{G}_n)_{n\geq1}$-martingale. And thus in light of (\ref{eqn: dog??}) and that
        \[
            \mathbb{E}E_Q[\mathcal{H}_{\infty,\beta}]=\lim_{n\to\infty} \mathbb{E}E_Q[\mathcal{H}_{n,\beta}] =1,
        \]
        the assertion is yielded.
    \end{proof} 
    Invoking Lemma \ref{lem: mixing inequality for martingale H} as well as Lemma \ref{lem: tau mixing inequality} again, we have
    \[
        \exp(-Cne^{-gtL}) E_Q [\mathcal{H}_{n,\beta}(\omega,\vdot)] \leq E_Q [\mathcal{L}_{n,\beta}(\omega,\vdot)] \leq \exp(C^\prime n e^{-gtL}) E_Q [\mathcal{H}_{n,\beta}(\omega,\vdot)],\qquad\mathbb{P}\text{-a.s.}
    \]
    And whence taking limit $n\to\infty$, it yields
    \begin{equation}\label{eqn: dog later}    
        -Ce^{-gtL}\leq\varliminf_{n\to\infty}\frac{1}{n}  \log E_Q [\mathcal{L}_{n,\beta}(\omega,\vdot)] \leq \varlimsup_{n\to\infty}\frac{1}{n}  \log E_Q [\mathcal{L}_{n,\beta}(\omega,\vdot)] \leq C^\prime e^{-gtL},\qquad\mathbb{P}\otimes Q\text{-a.s.}
    \end{equation}
    Remark that (\ref{eqn: dog later}) holds without taking $Q$-expectation either, by simple observation from Lemma \ref{lem: tau mixing inequality} again.
    \begin{proof}[Proof of Theorem \ref{thm: main results for beta*>0}.]
        By the law of large numbers,
        \[
            \frac{\tau^{(L)}_n}{n}\xrightarrow[]{n\to\infty} E_Q\big[\tau^{(L)}_1\big],\qquad Q\text{-a.s.}
        \]
        Take a nondecreasing sequence $(k_N)_{N\geq1}$ such that $\tau^{(L)}_{k_N}\leq N<\tau^{(L)}_{k_N+1}$ for any $N\geq1$. It is also clear that 
        \[
            \frac{k_N}{N}\xrightarrow[]{N\to\infty}\frac{1}{E_Q[\tau^{(L)}_1]},\qquad Q\text{-a.s.}
        \]
        Furthermore, by the choice of $l\geq1$ we have
        \[
            \log Z^{1,\xi(l)}_N\geq e^{-\beta l(N-\tau^{(L)}_{k_N})} Z^{1,\xi(l)}_{\tau^{(L)}_{k_N} },\qquad\mathbb{P}\otimes Q\text{-a.s.}
        \]
        On the other hand,
        \[
            \mathbb{E}\big[Z^{\beta,\omega}_N\big]\leq e^{(N-\tau^{(L)}_{k_N})(\ln\kappa+\ln \mathbb{E}[e^{\beta\omega_{1,0}}])} \mathbb{E}\big[Z^{\beta,\omega}_{\tau^{(L)}_{k_N} }\big],\qquad\forall~N\geq1.
        \]
        Notice also that
        \[
            \frac{1}{N}\big(\tau^{(L)}_{k_N+1} - \tau^{(L)}_{k_N}\big) = \frac{\tau^{(L)}_{k_N+1}}{k_N+1}\frac{k_N+1}{N} - \frac{\tau^{(L)}_{k_N}}{k_N}\frac{k_N}{N} \xrightarrow[]{N\to\infty}0,\qquad\mathbb{P}\otimes Q\text{-a.s.}
        \]
        Therefore, with (\ref{eqn: dog later}) and the remark below, letting $N\to\infty$, we get
        \[
            \varliminf_{N\to\infty} \frac{1}{N}\log W^{1,\xi(l)}_N \geq0,\qquad\mathbb{P}\otimes Q\text{-a.s.}
        \]
        Hence, for any $\delta>0$, there exists $N_0^\prime(\omega,\epsilon)\in\mathbb{N}$ such that $N^{-1}\log W^{1,\xi(l)}_N\geq-\delta$ whenever $N\geq N_0^\prime$. Thus, we can take sufficiently large $N_0(\omega)\in\mathbb{N}$ such that $Q(N_0(\omega)\geq N_0^\prime)\geq1-\delta$. Then,
        \[
            \frac{1}{N} E_Q\big[\log W^{1,\xi(l)}_N \big]\geq-(1+l)\delta,\qquad\forall~N\geq N_0(\omega),\qquad\mathbb{P}\text{-a.s.}
        \]
        Letting first $N\to\infty$ and then $\delta\to0$, we get
        \begin{equation*}
            \varliminf_{N\to\infty} \frac{1}{N} \log    E_Q\big[W^{1,\xi(l)}_N \big] \geq \varliminf_{N\to\infty} \frac{1}{N} E_Q\big[\log W^{1,\xi(l)}_N \big] \geq0,\qquad\mathbb{P}\text{-a.s.}
        \end{equation*}
        On the other hand, Jensen's inequality gives
        \begin{equation}\label{eqn: dog what}
            \lim_{N\to\infty} \frac{1}{N} \log    E_Q\big[W^{1,\xi(l)}_N \big] = 0,\qquad\mathbb{P}\text{-a.s.}
        \end{equation}
        Here (\ref{eqn: dog what}) is equivalent to say $\rho_l(\beta)=\lambda_l(\beta)$ for all $l\geq1$, where $\rho_l(\beta)\coloneqq\lim_{N\to\infty}N^{-1}\log Z^{\beta,\omega(l)}_N$ and $\lambda_l(\beta) \coloneqq \lim_{N\to\infty}N^{-1}\log \mathbb{E}[ Z^{\beta,\omega(l)}_N]$ for any $l\geq1$ and $\beta\geq0$. By Lemma \ref{lem: rho_l to rho, lambda_l to lambda}, we know $\rho_l(\beta)\to\rho(\beta)$ and $\lambda_l(\beta)\to\lambda(\beta)$ as $l\to\infty$. Hence the theorem is verified.        
    \end{proof}

\section{Delocalization regime}
    In this section, we show that under suitable conditions, the delocalization regime coexists with the localization regime non-trivially. We follow the convention in Lemma \ref{lem: annealed free energy is differentiable}, namely, 
    \[
            I_N\coloneqq e^{\beta\sum_{k=1}^N\omega_{k,S_k}},\quad I_{i,j}\coloneqq e^{\beta(\omega_{i,S_i}+\omega_{j,S_j})},\quad\text{and}\quad I_N^{i,j}\coloneqq e^{\beta\sum_{k\neq i,j}\omega_{k,S_k}},\qquad\forall~1\leq i,j\leq N.
    \]
    And we denote the law $\hat{\mathbb{E}}$ by specifying its density $d\hat{\mathbb{P}}/d\mathbb{P}=\frac{1}{\mathbb{E}[e^{\beta\omega_{1,0}}]}e^{\beta\omega_{1,0}}$ with respect to $\mathbb{P}$. The following lemma extends \cite[Lemma A.1]{Viveros} to time-correlated field and does not require a long-range structure for the reference random walk. 
    \begin{lemma}\label{lem: limit of entropy inequality}
        \normalfont
        Under the time-correlation, 
        \[
            \log\frac{1}{\mathbb{P}(\omega_{1,0}=\hbar)}-\log\kappa^2 \leq \varliminf_{\beta\to\infty} \beta\frac{d\lambda}{d\beta}(\beta)-\lambda(\beta) \leq \varlimsup_{\beta\to\infty} \beta\frac{d\lambda}{d\beta}(\beta)-\lambda(\beta) \leq \log\frac{1}{\mathbb{P}(\omega_{1,0}=\hbar)} +\log\kappa^2.
        \]
    \end{lemma}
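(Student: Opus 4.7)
The plan is to reduce the statement to the single-site moment-generating function $\phi(\beta) := \log\mathbb{E}[e^{\beta\omega_{1,0}}]$ via the $\kappa$-correlation estimate of Lemma \ref{lem: kappa-bound for a cone}, and then invoke standard convex-analysis facts about the Legendre transform.

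First I would iterate Lemma \ref{lem: kappa-bound for a cone} along any realization of the cone-confined path $(S_k)_{k=1}^N$. Because the walk in transient dimensions makes a genuine spatial step at every time, consecutive sites $(k,S_k)$ and $(k+1,S_{k+1})$ satisfy the distance requirement $d_1>1$, so the iteration is legitimate; combined with the time-translation invariance of $\omega$, it produces the two-sided bound $\kappa^{-(N-1)}\,e^{N\phi(\beta)} \leq \mathbb{E}[Z_N^{\beta,\omega}] \leq \kappa^{N-1}\,e^{N\phi(\beta)}$, whence $|\lambda(\beta) - \phi(\beta)| \leq \log\kappa$ uniformly in $\beta$. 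Absorbing an additional factor of $\kappa$ to accommodate the boundary site and the two-sided nature of the estimate yields the $\log\kappa^2$ slack appearing in the statement.

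Next I would argue that $\lim_{\beta\to\infty}\bigl[\beta\lambda'(\beta) - \lambda(\beta)\bigr] = \lambda^*(\hbar) := \sup_\beta\bigl[\beta\hbar - \lambda(\beta)\bigr]$. The upper bound is immediate: $\omega \leq \hbar$ forces $\lambda'(\beta) \leq \hbar$, hence $\beta\lambda'(\beta) - \lambda(\beta) \leq \beta\hbar - \lambda(\beta)$, and the right-hand side is non-decreasing in $\beta$ with limit $\lambda^*(\hbar)$. For the matching lower bound, set $f(\beta) := \lambda(\beta) - \beta\hbar + \lambda^*(\hbar)$; this is convex, non-increasing, non-negative, and $f(\beta)\to 0$. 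A routine convexity estimate $\tfrac{\beta}{2}|f'(\beta)| \leq f(\beta/2) - f(\beta)$ then forces $\beta(\hbar - \lambda'(\beta)) \to 0$, pinning the limit down to $\lambda^*(\hbar)$.

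Finally, applying the first step inside the Legendre transform gives $|\lambda^*(\hbar) - \phi^*(\hbar)| \leq \log\kappa$, while the classical single-site computation, using that $e^{\beta(\omega - \hbar)}$ is non-increasing in $\beta$ with pointwise limit the indicator of $\{\omega = \hbar\}$, yields $\phi^*(\hbar) = \lim_{\beta\to\infty}\bigl[\beta\hbar - \phi(\beta)\bigr] = \log(1/\mathbb{P}(\omega_{1,0}=\hbar))$ by monotone convergence. Combining these gives the asserted two-sided bound. The main obstacle is the convex-analysis step, which upgrades the a priori inequality $\beta\lambda'(\beta) - \lambda(\beta) \leq \beta\hbar - \lambda(\beta)$ to an asymptotic equality; it relies only on convexity of $\lambda$ and the essential-supremum bound $\omega \leq \hbar$, and not on the specific correlation structure, so the heavy lifting of the time-correlated environment is entirely absorbed into the first step.
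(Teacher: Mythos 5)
Your proposal is correct and takes a genuinely different route from the paper. The paper works directly with the tilted law $\mathbb{P}^\beta_N$ whose density with respect to $\mathbb{E}P^S_0$ is $I_N/\mathbb{E}[Z^{\beta,\omega}_N]$: it first establishes $\lim_{\beta\to\infty}\lambda'(\beta)=\hbar$ by showing that the tilted mass on $\{\omega_{k,S_k}<h\}$ vanishes uniformly in $N$ as $\beta\to\infty$, and then derives the two-sided $\log\kappa^2$ bound on $\beta\hbar-\lambda_N(\beta)$ by sandwiching $e^{\beta\hbar N}(\mathbb{P}(\omega_{1,0}=\hbar)+\delta)^N$ and $\kappa^{-2N}\hat{\mathbb{P}}(\omega_{1,0}\geq h)^N$ around the contribution to $\mathbb{E}[Z^{\beta,\omega}_N]$ from the event that every visited site sees $\omega\geq h$. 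Your route instead factors the problem cleanly through the one-site cumulant $\phi(\beta)=\log\mathbb{E}[e^{\beta\omega_{1,0}}]$: the $\kappa$-decoupling of Lemma \ref{lem: kappa-bound for a cone} yields $|\lambda-\phi|\leq\log\kappa$, a classical monotone-convergence computation identifies $\phi^*(\hbar)=\log\bigl(1/\mathbb{P}(\omega_{1,0}=\hbar)\bigr)$, and a convexity estimate identifies $\lim_{\beta\to\infty}\bigl[\beta\lambda'(\beta)-\lambda(\beta)\bigr]$ with $\lambda^*(\hbar)$.

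Two remarks. First, your argument actually delivers the sharper slack $\log\kappa$ rather than $\log\kappa^2$; there is no need to ``absorb an additional factor of $\kappa$'' --- the two-sided $\kappa$-decoupling already gives $|\lambda(\beta)-\phi(\beta)|\leq\log\kappa$ uniformly in $\beta$, hence $|\lambda^*(\hbar)-\phi^*(\hbar)|\leq\log\kappa$. Second, and more importantly, your convexity step is not a stylistic convenience but closes a genuine logical gap in the paper's own argument: since $\lambda'(\beta)\leq\hbar$ one has $\beta\lambda'(\beta)-\lambda(\beta)\leq\beta\hbar-\lambda(\beta)$, so the paper's lower bound on $\beta\hbar-\lambda(\beta)$ does not transfer to $\beta\lambda'(\beta)-\lambda(\beta)$ merely from knowing $\lambda'(\beta)\to\hbar$; one must control the difference $\beta(\hbar-\lambda'(\beta))$. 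Your estimate $\tfrac{\beta}{2}|f'(\beta)|\leq f(\beta/2)-f(\beta)$ applied to $f(\beta)\coloneqq\lambda^*(\hbar)-\beta\hbar+\lambda(\beta)$ supplies exactly the missing quantitative statement, valid whenever $\mathbb{P}(\omega_{1,0}=\hbar)>0$ (so that $\lambda^*(\hbar)<\infty$; if $\mathbb{P}(\omega_{1,0}=\hbar)=0$ both ends of the displayed chain are $+\infty$ and the lemma is vacuous). So your proposal is correct, gives a slightly sharper constant, and is in one place more careful than the paper's proof.
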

    \begin{proof}
        Let us denote the law $\mathbb{E}^\beta_N$ by specifying its density $d\mathbb{P}^\beta_N/d\mathbb{E}P^S_0=\frac{1}{\mathbb{E}[Z^{\beta,\omega}_N]}I_N$. For each $\beta\geq0$ and for any $h<\hbar$ choose $h<h^\prime<\hbar$, then there exists $\delta>0$ such that
        \[
            \frac{1}{N}\sum_{k=1}^N\mathbb{E}P^S_0(\omega_{k,S_k}>h^\prime) = \mathbb{P}(\omega_{1,0}>h^\prime)>\delta>0.
        \]
        And then
        \[
            \frac{1}{N}\sum_{k=1}^N\mathbb{P}^\beta_N(\omega_{k,S_k}<h)\leq \frac{\kappa^2}{N}\sum_{k=1}^N \hat{\mathbb{P}}(\omega_{1,0}<h)\leq\kappa^2\frac{e^{\beta h}}{\mathbb{E}[e^{\beta\omega_{1,0}},\,\omega_{1,0}>h^\prime]}\leq \kappa^2\delta^{-1}e^{-\beta(h^\prime-h)}.
        \]
        This actually implies that, for any $h<\hbar$,
        \begin{equation}\label{eqn: uniform convergence in N}
            \frac{1}{N}\sum_{k=1}^N\mathbb{P}^\beta_N(\omega_{k,S_k}<h) \rightrightarrows0,\quad\text{as}\;\;\beta\to\infty,\quad\text{uniformly in}~N.
        \end{equation}
        Without loss of generality, let us now assume $\hbar<\infty$. For any $\delta>0$, choose $h>0$ with $0<\hbar-h<\delta$. Notice that by (\ref{eqn: pointwise convergence of derivative of lambda}) we have
        \[
            \lambda^\prime(\beta)=\lim_{N\to\infty}\lambda^\prime_N(\beta)=\lim_{N\to\infty}\frac{1}{N}\sum_{k=1}^N \mathbb{E}^\beta_N[\omega_{k,S_k}]\leq\hbar.
        \]
        And for any $N\geq1$, we have 
        \[
            \frac{1}{N}\sum_{k=1}^N\mathbb{E}^\beta_N[\omega_{k,S_k}] \geq \frac{1}{N}\sum_{k=1}^N\mathbb{E}^\beta_N[\omega_{k,S_k}\mathbbm{1}_{\{\omega_{k,S_k}\geq h\}}] \geq \frac{1}{N}(\hbar-\delta)\sum_{k=1}^N\mathbb{P}^\beta_N(\omega_{k,S_k}\geq h)\geq (\hbar-\delta)(1-\delta),
        \]
        where for the last inequality, we have chosen $\beta$ sufficiently large uniformly in $N$, see (\ref{eqn: uniform convergence in N}). Therefore,
        \begin{equation}\label{eqn: limit i beta of derivative}
            \lim_{\beta\to\infty} \lambda^\prime(\beta)=\hbar.
        \end{equation}
        For any $\delta>0$,  choose $h>0$ such that $0<\hbar-h<\delta$ and $\mathbb{P}(\omega_{1,0}\geq h)\leq\delta+\mathbb{P}(\omega_{1,0}=\hbar)$. Then,
        \[
            e^{\beta\hbar N-\lambda_N(\beta)N}\big(\mathbb{P}(\omega_{1,0}=\hbar)+\delta\big)^N\geq \kappa^{-N}\mathbb{E}E^S_0\big[I_N e^{-\lambda_N(\beta)N}\mathbbm{1}_{ \cap_{k=1}^N\{ \omega_{k,S_k}\geq h \}}\big]\geq\kappa^{-2N}\hat{\mathbb{P}}(\omega_{1,0}\geq h)^N,
        \]
        where using a derivation similar to (\ref{eqn: uniform convergence in N}), we know $\hat{\mathbb{P}}(\omega_{1,0}\geq h)\to1$ as $\beta\to\infty$. Henceforth, 
        \[
            \beta\hbar-\lambda_N(\beta)\geq \log \hat{\mathbb{P}}(\omega_{1,0}\geq h) -\log\kappa^2 + \log\frac{1}{\mathbb{P}(\omega_{1,0}=\hbar)+\delta}.
        \]
        First letting $N\to\infty$, then $\beta\to\infty$, and then letting $\delta\to0$, we obtain the lower-bound. The upper-bound can be derived in a similar fashion, and we omit the details here.     
    \end{proof} 

    \begin{proof}[Proof of Theorem \ref{thm: main results for beta*<infinity}.]
        For any $\beta\geq0$,
        \[
            Z^{\beta,\omega}_N=\sum_{x\in\mathbb{Z}^d}P^S_0(S_1=x) e^{\beta\omega_{1,x}} Z^{\beta,\vartheta_{1,x}\omega}_{N-1}.
        \]
        Let $1<\theta<1$, by the subaddictive estimate,
        \[
            (Z^{\beta,\omega}_N)^\theta\leq\sum_{x\in\mathbb{Z}^d} P^S_0(S_1=x)^\theta e^{\beta\theta\omega_{1,x}}(Z^{\beta,\vartheta_{1,x}\omega}_{N-1})^{\theta}.
        \]
        Inductively, we have
        \begin{equation*}\begin{aligned}
            (Z^{\beta,\omega}_N)^\theta &\leq \sum_{S^\prime\in\mathcal{S}_N} e^{\beta\theta\sum_{k=1}^N\omega_{k,S^\prime_k}} \prod_{k=1}^N P^S_0(S_k-S_{k-1}=S^{\prime}_k-S^\prime_{k-1})^\theta\\
            &\leq p_S^{(\theta-1)N} \sum_{S^\prime\in\mathcal{S}_N} e^{\beta\theta\sum_{k=1}^N\omega_{k,S^\prime_k}} P^S_0(S_k=S^\prime_k,\,k=1,\ldots,N),
        \end{aligned}\end{equation*}
        where $P_S\coloneqq\min\{P^S_0(S_1=x):\,x\in\mathbb{Z}^d\,\text{s.t.}\,P^S_0(S_1=x)>0\}$. Notice that by the finite-range condition $\normx{S_1}_1<\infty$, we always have $p_S>0$. Define the constant $K(S)\coloneqq-\log p_S/H(S_1)$, where $H(S_1)$ is the discrete entropy of the distribution of $S_1$, see \cite[Definition 2.1]{Cover}. Hence,
        \[
            \mathbb{E}\big[(Z^{\beta,\omega}_N)^\theta\big]\leq p_S^{(\theta-1)N}\mathbb{E}\big[Z^{\beta\theta,\omega}_N\big]
        \]
        and
        \[
            \rho(\beta)=\lim_{N\to\infty}\frac{1}{\theta N}\mathbb{E}\big[\theta\log Z^{\beta,\omega}_N\big] \leq \varlimsup_{N\to\infty} \frac{1}{\theta N}\log \mathbb{E}\big[(Z^{\beta,\omega}_N)^\theta\big],
        \]
        which implies
        \begin{equation}\label{eqn: dog 5.3}        
            \rho(\beta)\leq\inf_{0<\theta<1}\big\{ \theta^{-1}\mathscr{H}(\theta) \big\},\qquad\text{with}\quad\mathscr{H}(\theta)\coloneqq \log p_S^{\theta-1}+\lambda(\beta\theta).
        \end{equation}
        If $\frac{d}{d\theta}\theta^{-1}\mathscr{H}(\theta)>0$ at $\theta=1$, the infimum (\ref{eqn: dog 5.3}) above is achieved at some $0<\theta<1$, and will be strictly less than $\lambda(\beta)$. Indeed, 
        \[
            \frac{d}{d\theta} \theta^{-1}\mathscr{H}(\theta) = \theta^{-2}\big( \log p_S + \beta\theta\lambda^\prime(\beta\theta) - \lambda(\beta\theta) \big)>0
        \]
        at $\theta=1$ whenever $\beta\lambda^\prime(\beta)-\lambda(\beta)>K(S)H(S_1)$. Furthermore, by Lemma \ref{lem: limit of entropy inequality} we know $\varliminf_{\beta\to\infty} \beta\lambda^\prime(\beta)-\lambda(\beta)\geq K^\prime - \log\mathbb{P}(\omega_{1,0}=\hbar) $, where we take $K^\prime=-2\log\kappa$. And then the assertion is verified.
    \end{proof}

\appendix

\section{$L^2$-integrable martingale}\label{appendix b}
    In this section we prove the process $(\mathcal{H}_{n,\beta})_{n\geq1}$ is a square-integrable martingale with sufficiently small $\beta>0$ for each $l\geq1$. And we will abbreviate $\tau^{(L)}_n$ by $\tau_n$ for each $n\in\mathbb{N}$. Define $\mathcal{H}_{n,\beta}$ as in (\ref{eqn: definition of H}) and we denote $\psi(\beta)\coloneqq\mathbb{E}E_Q[Z^{1,\xi(l)}_{\tau_1}]=\mathbb{E}\overline{E}^S_0[e^{\sum_{k=1}^{\tau_1}\xi^l_{k,S_k}}  ]$ for all $\beta>0$. Let $(\widetilde{S}_n)_{n\geq}$ be an i.i.d. copy of $(S_n)_{n\geq1}$, we can express $\normx{\mathcal{H}_{n,\beta}}_{L^2(\mathbb{P}\otimes Q)}$ by
    \[
        \mathbb{E}E_Q\big[(\mathcal{H}_{n,\beta})^2\big] = \psi(\beta)^{-2n} E^{S\otimes\widetilde{S}}_{0,0}\prod_{j=0}^{n-1}\mathbb{E}E_Q\big[ e^{\Delta_{j,n}+\widetilde{\Delta}_{j,n}} e^{\sum_{k=\tau_j+1}^{\tau_{j+1}} \xi^l_{k,S_k} + \sum_{k=\tau_j+1}^{\tau_{j+1}} \xi^l_{k,\widetilde{S}_k} } \big],
    \]
    where $\widetilde{\Delta}_{j,n}\coloneqq\log\mathbb{E}\overline{E}^{\widetilde{S}}_0[e^{\sum_{k=\tau_j+1}^{\tau_{j+1}}\xi^l_{k,\widetilde{S}_k} } |\mathscr{G}_j] - \log\psi(\beta)$ for all $0\leq j\leq n-1$. We also set $\Vec{X}_j \coloneqq (S_{\tau_{j-1}+1},\ldots,S_{\tau_j})$ and $\Vec{Y}_j \coloneqq (\widetilde{S}_{\tau_{j-1}+1},\ldots,\widetilde{S}_{\tau_j})$ for each $1\leq j\leq n$, and 
    \[
        \frac{1}{2}V(\Vec{X}_j,\Vec{Y}_j)\coloneqq \log \frac{\mathbb{E}[e^{\Delta_{j,n}+\sum_{k=\tau_j+1}^{\tau_{j+1}} \xi^l_{k,S_k}} e^{\widetilde{\Delta}_{j,n}+\sum_{k=\tau_j+1}^{\tau_{j+1}} \xi^l_{k,\widetilde{S}_k}}]}{\mathbb{E}E_Q[e^{\Delta_{j,n}+\sum_{k=\tau_j+1}^{\tau_{j+1}}\xi^l_{k,S_k}}] \mathbb{E}E_Q[e^{\widetilde{\Delta}_{j,n}+\sum_{k=\tau_j+1}^{\tau_{j+1}} \xi^l_{k,\widetilde{S}_k}}]}.
    \]
    And we have the following assertion, which is adapted and refined from \cite[Lemma 3.3]{Bazaes/Mukherjee/Ramirez/Saglietti}.
    \begin{lemma}\label{lem: bound for ball probability}
        \normalfont
        For any $r>\sqrt{d}$, 
        \[
            \sum_{j=0}^\infty P^{S\otimes\widetilde{S}}_{x,y}\otimes Q\big(d_1(\Vec{X}_j,\Vec{Y}_j)\leq r\big) \leq \sum_{j=0}^\infty \overline{E}^{S\otimes\widetilde{S}}_{x,y}\big[N^r_j(\Vec{X},\Vec{Y})\mathbbm{1}_{\{d_1(\Vec{X}_j,\Vec{Y}_j)\leq r\}}\big]\leq K_1r^2
        \]
        for some constant $K_1>0$ and for any $x,y\in\mathbb{Z}^d$, where
        \[
            N^r_j(\Vec{X},\Vec{Y})\coloneqq\sum_{k=\tau_{j-1}+1}^{\tau_j} \mathbbm{1}_{\{ |S_k-\widetilde{S}_k|_1\leq r \}},\qquad\forall~1\leq j\leq n.
        \]
    \end{lemma}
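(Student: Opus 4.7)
The plan is to deal with the two inequalities separately. The first is immediate from the definitions: on the event $\{d_1(\Vec{X}_j, \Vec{Y}_j) \leq r\}$, by the convention on the $\ell_1$-distance between the space-time graphs of the two paths on $(\tau_{j-1}, \tau_j]$, at least one time index $k \in (\tau_{j-1}, \tau_j]$ satisfies $|S_k - \widetilde{S}_k|_1 \leq r$ (using $r > \sqrt{d}$ together with the bounded increments of $\widetilde{S}$ to convert a space-time minimum into a bound at matching times, with constants harmlessly absorbed). Hence $N^r_j(\Vec{X}, \Vec{Y}) \geq 1$ on this event, and multiplying by $\mathbbm{1}_{\{d_1(\Vec{X}_j, \Vec{Y}_j) \leq r\}}$ yields the first inequality.

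For the substantive upper bound by $K_1 r^2$, I would first drop the indicator, which only weakens the inequality, and then exploit the fact that $\tau_j \to \infty$ $Q$-a.s.\ by Lemma~\ref{lem: tau lower bound}, so the intervals $\{(\tau_{j-1}, \tau_j]\}_{j \geq 1}$ form a partition of $\mathbb{N}$. This makes the double sum in $j$ and $k$ telescope into a single sum:
$$\sum_{j=1}^\infty \overline{E}^{S \otimes \widetilde{S}}_{x,y}\bigl[N^r_j(\Vec{X}, \Vec{Y}) \mathbbm{1}_{\{d_1(\Vec{X}_j, \Vec{Y}_j) \leq r\}}\bigr] \leq \sum_{k=1}^\infty P^{S \otimes \widetilde{S}}_{x,y}\bigl(|S_k - \widetilde{S}_k|_1 \leq r\bigr).$$
Setting $D_k \coloneqq S_k - \widetilde{S}_k$, the right-hand side equals $\sum_{|z|_1 \leq r} G_D(x-y, z)$, where $G_D$ denotes the Green function of the difference walk $(D_k)_{k \geq 0}$.

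Because $S$ and $\widetilde{S}$ are independent copies of a bounded i.i.d.\ random walk, the process $D$ has bounded, centered, symmetric i.i.d.\ increments on $\mathbb{Z}^d$ and is transient since $d \geq 3$. Standard pointwise Green-function bounds (via the local CLT or a direct Fourier computation) give $G_D(w, z) \leq C (1 + |z - w|_1)^{-(d-2)}$. Decomposing the $\ell_1$-ball into spheres and noting that the resulting sum is maximized when $x = y$, a direct spherical-shell computation gives
$$\sum_{|z|_1 \leq r} G_D(x-y, z) \leq C \sum_{m=0}^{\lceil r \rceil} (1+m)^{d-1} \cdot (1+m)^{-(d-2)} \leq K_1 r^2,$$
uniformly in $x, y \in \mathbb{Z}^d$. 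The main obstacle is the pointwise Green-function decay for the symmetrized bounded walk; this is classical but is also precisely where the transience assumption $d \geq 3$ becomes essential, making the sum above convergent with the correct $r^2$ scaling.
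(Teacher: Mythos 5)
Your argument is correct in outcome and shares the paper's central reduction: after dropping the indicator (noting $N^r_j\,\mathbbm{1}_{\{d_1(\Vec X_j,\Vec Y_j)\leq r\}} = N^r_j$, since $N^r_j>0$ already forces $d_1\leq r$ at matching times), the telescoping over the partition $\{(\tau_{j-1},\tau_j]\}_j$ collapses the double sum to $\sum_{n\geq 0} P_z(|Z_n|_1\leq r)$ for the difference walk $Z_n = S_n-\widetilde S_n$. From that point the two proofs diverge. The paper works directly in Fourier space: it integrates the tent function $\prod_j f(x_j/r)$ against $\sum_n \mu^{\star n}$, uses that $\chi_\mu$ is real and nonnegative (because the increment of $Z$ is symmetric), and then only needs the elementary Taylor bound $1-\chi_\mu(\xi)\geq c_0|\xi|^2$ near the origin, landing on $Cr^d\int_{B_{\sqrt d/r}(0)}|\xi|^{-2}d\xi\lesssim r^2$. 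You instead invoke the pointwise Green-function decay $G_D(z)\lesssim |z|^{2-d}$ and sum over $\ell_1$-shells. Both are classical and both give $K_1 r^2$, but the paper's route is more self-contained: the $|\xi|^{-2}$ bound needs only nondegeneracy of the covariance, whereas the pointwise Green-function asymptotics require a local CLT (and in principle some care with aperiodicity of the difference walk, which for a finite-range walk lives on a sublattice); you tacitly offload this to ``standard bounds.'' One further loose point: your justification of the first inequality only shows that $\{d_1(\Vec X_j,\Vec Y_j)\leq r\}$ implies $N^{(1+M)r}_j\geq 1$ (with $M=\|\widetilde S_1\|_1$), not $N^r_j\geq 1$; the constant inflation is indeed harmless for the $K_1 r^2$ bound but does not literally recover the displayed intermediate inequality. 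The paper leaves that intermediate step equally implicit, so this is a presentational issue rather than a substantive gap.
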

    \begin{proof}
        Denote $x-y\eqqcolon z$ and $S_n-\widetilde{S}_n\eqqcolon Z_n$ for each $n\in\mathbb{N}$. Then $(Z_n)_{n\geq0}$ is a finite-range simple random walk with law denoted by $P_z$ and $E_z\coloneqq E_{P_z}$. It is clear that
        \[
            \sum_{j=0}^\infty \overline{E}^{S\otimes\widetilde{S}}_{x,y}\big[N^r_j(\Vec{X},\Vec{Y})\mathbbm{1}_{\{d_1(\Vec{X}_j,\Vec{Y}_j)\leq r\}}\big] = \sum_{n=0}^\infty E_z\big[\mathbbm{1}_{\{\abs{Z_n}_1\leq r\}}\big].
        \]
        Since we are interested in the event $\{\abs{Z_n}_1\leq r\}$, we need to consider the function $\prod_{j=1}^d f(x_j/r)$ where we define $f(x_j)\coloneqq\max\{1-\abs{x_j},0\}$. Then we have the Fourier transform of the product
        \[
            \widehat{\prod_{j=1}^d f(x_j) } = \prod_{j=1}^d \widehat{f}(\xi_j),\qquad\text{with}\quad \widehat{f}(\xi_j)=\frac{2}{\xi_j^2}(1-\cos\xi_j).
        \]
        Let $\mu$ denote the law of $Z_1=S_1-\widetilde{S}_1$ and $\mu^{\star n}=\mu\star\cdots\star\mu$ its $n$-fold convolution. Then,
        \[
            \int_{\mathbb{R}^d}\widehat{\prod_{j=1}^d f(x_j/r)}\,\mu^{\star n}(dx) = r^d\int_{\mathbb{R}^d} \prod_{j=1}^d f(\xi_j r)\chi_\mu(\xi)^n\,d\xi,
        \]
        where the characteristic function $\chi_\mu(\xi)\coloneqq E_z[e^{i\langle\xi,Z_1-Z_0\rangle}]$ takes only real non-negative value because $Z_1-z=(S_1-x)-(\widetilde{S}_1-y)$ with $S_1-x,\widetilde{S}_1-y$ i.i.d. Hence for any $0<\theta<1$,
        \[
            \int_{\mathbb{R}^d} \widehat{\prod_{j=1}^d f(x_j/r)} \sum_{n=0}^\infty \theta^n\mu^{\star n}(dx) = r^d \int_{\mathbb{R}^d} \frac{\prod_{j=1}^d f(\xi_j r)}{1-\theta\chi_\mu(\xi)}\,d\xi,
        \]
        which, with $B_r(0)\coloneqq\{x\in\mathbb{R}^d:\,\abs{x}_1\leq r\}$, implies that
        \begin{equation*}\begin{aligned}
            &\sum_{n=0}^\infty P_z\big(\abs{Z_n}_1\leq r\big) \leq \sum_{n=0}^\infty \mu^{\star n}\big(B_r(0)\big) \leq C\int_{\mathbb{R}^d} \widehat{\prod_{j=1}^d f(x_j/r) } \sum_{n=0}^\infty \mu^{\star n}(dx)\\
            &\qquad\leq Cr^d\sup_{0<\theta<1} \int_{\mathbb{R}^d} \frac{\prod_{j=1}^d f(\xi_j r)}{1-\theta\chi_\mu(\xi)}\,d\xi \leq C^\prime r^d \int_{B_{\delta}(0)} \frac{d\xi}{1-\chi_\mu(\xi)},\qquad\text{where}\quad\delta\coloneqq\tfrac{\sqrt{d}}{r}.
        \end{aligned}\end{equation*}
        By Taylor's expansion, we have $\chi_\mu(\xi)\leq 1-\frac{1}{2}\sum_{j,k=1}^d a_{jk}\xi_j\xi_k+C\abs{\xi}^3$ for some constant $C>0$, where $(a_{jk})_{j,k}$ is the covariance matrix of $Z_1$. Finally, there exists constant $c_0>0$ such that $c_0\abs{\xi}^2\leq 1-\chi_\mu(\xi)$ if $\abs{\xi}\leq1$. In particular, when $r>\sqrt{d}$,
        \[
            \sum_{n=0}^\infty P_z\big(\abs{Z_n}_1\leq r\big) \leq C r^d \int_{B_\delta(0)} \abs{\xi}^{-2}\,d\xi\leq C^\prime r^d \int_r^\infty t^{-(d-1)}\,dt\leq K_1 r^2,
        \]
        verifying the assertion.
    \end{proof}

    \begin{lemma}\label{lem: probability estimate}
        \normalfont
        Let $(Z_n)_{n\geq0}$ be a Markovian random walk on $\mathbb{Z}^d$ starting from any $z\in\mathbb{Z}^d$ with law $P_z$ and $E_z\coloneqq E_{P_z}$. If we define
        \[
            \eta(r)\coloneqq\sum_{n=0}^\infty E_0\big[\mathbbm{1}_{\{ \abs{Z_n}_1\leq r \}}\big] = \sum_{n=0}^\infty P_0\big(\abs{Z_n}_1\leq r\big),\qquad\forall~r>0,
        \]
        then for any $C>0$ with $C\eta(r)<1$, we have
        \[
            E_0\big[e^{C \sum_{n=0}^\infty \mathbbm{1}\{\abs{Z_n}_1\leq r\} }\big] \leq \frac{1}{1-C\eta(r)}.
        \]
    \end{lemma}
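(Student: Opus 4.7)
The plan is to apply a classical Khas'minskii-type moment expansion: expand $E_0[e^{CA}]$ as a power series in $C$, bound the $k$-th moment by $k!\,\eta(r)^k$, and then recognise the resulting sum as a convergent geometric series with ratio $C\eta(r) < 1$. Here I abbreviate $V_n \coloneqq \mathbbm{1}\{\abs{Z_n}_1 \leq r\}$, $A \coloneqq \sum_{n\geq 0} V_n$ (so $\eta(r) = E_0[A]$), and $V \coloneqq \{x \in \mathbb{Z}^d : \abs{x}_1 \leq r\}$.

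First, since each $V_n \in \{0,1\}$, expanding $A^k = (\sum_n V_n)^k$ and sorting the $k$ summation indices yields the elementary inequality
\[
A^k \leq k! \sum_{0 \leq n_1 \leq n_2 \leq \cdots \leq n_k} V_{n_1} V_{n_2} \cdots V_{n_k},
\]
which can be verified by comparing coefficients case-by-case on the number of distinct indices. Applying the Markov property at each consecutive time and then collapsing the time increments via the Green's function identity $\sum_{m \geq 0} p_m(x,y) = G(x,y)$ gives
\[
\sum_{0 \leq n_1 \leq \cdots \leq n_k} E_0\!\left[V_{n_1} \cdots V_{n_k}\right] = \sum_{y_1, \ldots, y_k \in V} G(0, y_1)\,G(y_1, y_2) \cdots G(y_{k-1}, y_k) \leq \eta(r) \cdot M_V^{k-1},
\]
where $M_V \coloneqq \max_{y \in V} \sum_{z \in V} G(y, z)$ captures the largest Green mass of $V$ as the starting site varies. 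The final ingredient is to show $M_V \leq \eta(r)$: by translation invariance of $(Z_n)$ one has $\sum_{z \in V} G(y, z) = \sum_{w \in B_r(-y)} G(0, w)$, and the symmetric-increment structure of the walk (in the paper's usage $Z_n = S_n - \widetilde S_n$, so that $Z_n \stackrel{d}{=} -Z_n$) combined with the fact that $G(0,\cdot)$ attains its maximum at the origin implies that the translated sum is largest when the ball is centred at $0$. Combining everything,
\[
E_0[e^{CA}] = \sum_{k=0}^\infty \frac{C^k}{k!}\,E_0[A^k] \leq \sum_{k=0}^\infty (C\eta(r))^k = \frac{1}{1 - C\eta(r)}.
\]

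The principal obstacle is precisely this last step: the strong-Markov bookkeeping naturally produces a supremum over starting sites, and one must use the translation invariance together with the symmetry of the increments to collapse that supremum back to the value at the origin. Everything else is routine power-series manipulation and Markov-property bookkeeping, so the argument is essentially driven by the classical Khas'minskii trick once the symmetry of the auxiliary walk $(Z_n)$ is invoked.
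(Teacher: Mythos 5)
Your proof is structurally the same as the paper's: expand the exponential, bound the $k$-th moment of $A=\sum_n V_n$ by $k!$ times the sum over ordered time tuples, peel off the last index via the Markov property (which you package as a Green's-function convolution; the paper does the same iteration without naming $G$), and recognize the geometric series. The $A^k \le k!\sum_{0\le n_1\le\cdots\le n_k}V_{n_1}\cdots V_{n_k}$ step is fine since each $V_i\in\{0,1\}$. The one place where your argument does not actually close is the last ingredient, the claim $M_V\le\eta(r)$. The deduction ``$G(0,\cdot)$ attains its maximum at $0$ implies $\sum_{w\in B_r(-y)}G(0,w)\le\sum_{w\in B_r(0)}G(0,w)$'' is not valid: maximality of $G(0,\cdot)$ at a single point does not control sums over translated balls, and a genuine rearrangement or radial-monotonicity property of $G(0,\cdot)$ would be needed, which is not established (and is not a consequence of the symmetry $Z_n\overset{d}{=}-Z_n$ alone). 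What the Markov peeling really produces is a bound of the form $E_0[A^k]\le k!\,\eta(r)\,\eta^*(r)^{k-1}$ with $\eta^*(r):=\sup_{y}\sum_m P_y(|Z_m|_1\le r)$, hence $E_0[e^{CA}]\le 1/(1-C\eta^*(r))$. The paper handles this by invoking Lemma~\ref{lem: bound for ball probability}, which gives the uniform-in-starting-point bound $\sup_y\sum_m P_y(|Z_m|_1\le r)\le K_1 r^2$; and indeed it is $K_1 r_0^2$, not $\eta(r_0)$, that is used when the lemma is applied in~(\ref{eqn: rest of ak}). To repair your write-up, either replace $\eta(r)$ by $\eta^*(r)$ in the statement and conclude with the uniform bound $K_1r^2$ from Lemma~\ref{lem: bound for ball probability}, or supply an actual monotonicity argument for $\sum_{w\in B_r(-y)}G(0,w)$ in $|y|$, which cannot be waved through via the max of $G$ at the origin.
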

    \begin{proof}
        We can write $E_0[e^{C\sum_{n=0}^\infty\mathbbm{1}\{\abs{Z_n}_1\leq r\} }]$ as
        \[
            \sum_{n=0}^\infty \frac{C^n}{n!} E_0\bigg[\bigg( \sum_{k=0}^\infty \mathbbm{1}_{\{ \abs{Z_k}_1\leq r \}} \bigg)^n\bigg]\leq \sum_{n=0}^\infty C^n \sum_{0\leq k_1\leq\cdots\leq k_n} P_0\bigg(\abs{Z_{k_1}}_1\leq r,\ldots,\,\abs{Z_{k_n}}_1\leq r\bigg).
        \]
        Invoking Lemma \ref{lem: bound for ball probability}, we can bound $E_0[e^{C\sum_{n=0}^\infty\mathbbm{1}\{\abs{Z_n}_1\leq r\} }]$ from above by
        \begin{equation*}\begin{aligned}
            &\sum_{n=0}^\infty C^n \sum_{0\leq k_1\leq\cdots\leq k_{n-1}} E_0\bigg[\abs{Z_{k_1}}_1\leq r,\ldots,\,\abs{Z_{k_{n-1}}}_1\leq r,\sum_{k_n=k_{n-1}}^\infty P_{Z_{k_{n-1}}} \big( \abs{Z_{k_n-k_{n-1}}}_1\leq r \big) \bigg]\\
            &\qquad\leq \sum_{n=0}^\infty C^n\eta(r) \sum_{0\leq k_1\leq\cdots\leq k_{n-1}} P_0\bigg(\abs{Z_{k_1}}_1\leq r,\ldots,\,\abs{Z_{k_{n-1}}}_1\leq r\bigg)\leq \sum_{n=0}^\infty C^n\eta(r)^n = \frac{1}{1-C\eta(r)},
        \end{aligned}\end{equation*}
        when $C\eta(r)<1$, which verifies the claim. 
    \end{proof}

    \begin{lemma}\label{lem: function Lambda(beta)}
        \normalfont
        Under the time-correlations, for any fixed time point $T>0$ there exists continuous function $\Lambda:[0,T]\to\mathbb{R}$ with $\exp(\Lambda(0))=1$ satisfying
        \[
            \frac{\mathbb{E}[e^{\beta\omega_x}e^{\beta\omega_y}\prod_{z\in I}e^{\beta\omega_z}]}{\mathbb{E}[e^{\beta\omega_x}\prod_{z\in I^\prime}e^{\beta\omega_z}]  \mathbb{E}[e^{\beta\omega_y}\prod_{z\in I^{\prime\prime}}e^{\beta\omega_z}] } \leq \exp(\Lambda(\beta)) \frac{\mathbb{E}[\prod_{z\in I}e^{\beta\omega_z}]}{\mathbb{E}[\prod_{z\in I^\prime}e^{\beta\omega_z}] \mathbb{E}[\prod_{z\in I^{\prime\prime}}e^{\beta\omega_z}] }
        \]
        for any $x,y\in\mathbb{N}\times\mathbb{Z}^d$ and $I,I^\prime,I^{\prime\prime}\subseteq(\mathbb{N}\times\mathbb{Z}^d)\backslash\{x,y\}$.
    \end{lemma}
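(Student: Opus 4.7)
The plan is to reformulate the stated decoupling bound as a comparison of moments under the tilted probability measures
$\mathbb{P}_J \coloneqq (\mathbb{E}[\prod_{z\in J}e^{\beta\omega_z}])^{-1} \prod_{z\in J}e^{\beta\omega_z}\,\mathbb{P}$, indexed by finite $J \subseteq \mathbb{N}\times\mathbb{Z}^d$, and then bound each such tilted moment via the time-correlation hypothesis. Dividing both sides of the target inequality by $\mathbb{E}[\prod_{z\in I}e^{\beta\omega_z}]/(\mathbb{E}[\prod_{z\in I'}e^{\beta\omega_z}]\mathbb{E}[\prod_{z\in I''}e^{\beta\omega_z}])$ recasts the claim as
$\mathbb{E}_I[e^{\beta\omega_x}e^{\beta\omega_y}] \leq e^{\Lambda(\beta)} \mathbb{E}_{I'}[e^{\beta\omega_x}]\mathbb{E}_{I''}[e^{\beta\omega_y}]$. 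Because each Radon--Nikodym density $d\mathbb{P}_J/d\mathbb{P}$ is $\mathscr{F}_J$-measurable, conditional laws of $(\omega_x,\omega_y)$ given $\mathscr{F}_J$ are left unchanged by the tilt, so $\mathbb{E}_J[e^{\beta\omega_x}] = \mathbb{E}_J[\mathbb{E}[e^{\beta\omega_x}\mid\mathscr{F}_J]]$, and all work reduces to controlling these inner conditional moments.

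My first step would be to sharpen Lemma~\ref{lem: kappa-bound for a cone} by dropping its cone hypothesis: for any finite $J$ not containing $(n,z)$, I would prove
$K^{-1}\mathbb{E}[e^{\beta\omega_{n,z}}] \leq \mathbb{E}[e^{\beta\omega_{n,z}}\mid\mathscr{F}_J] \leq K\mathbb{E}[e^{\beta\omega_{n,z}}]$
$\mathbb{P}$-almost surely, with a universal constant $K=K(C,g)<\infty$. Under $(\mathbf{TC})_{C,g}$ this amounts to the uniform-in-$J$ summability
$\sum_{\vec{y}\in\partial\{\vec{z}\}}\sum_{\vec{w}\in\partial J} e^{-g|\vec y-\vec w|_1} \leq C_d \sum_{\vec{w}\in\mathbb{N}\times\mathbb{Z}^d} e^{-g|\vec z - \vec w|_1} < \infty$, while under $(\mathbf{TCG})_{C,g}$ the analogous series over $\{\vec z\}\times J$ is likewise finite. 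When $J$ contains nearest neighbors of $(n,z)$ the $d_1$-distance hypothesis of Definitions~\ref{def: time-correlated}--\ref{def: Guo's time-correlated} forces me to first absorb the finitely many neighboring coordinates into a preliminary conditioning step, estimating their contribution using the exponential-moment bound (\ref{eqn: finit exponential moment of omega}). An analogous derivation for the pair $\Delta=\{x,y\}$ yields the joint bound $\mathbb{E}[e^{\beta\omega_x+\beta\omega_y}\mid\mathscr{F}_J] \leq K^2\,\mathbb{E}[e^{\beta\omega_x+\beta\omega_y}]$.

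Assembling these tilted-moment bounds with their matching lower analogues gives
\[
\frac{\mathbb{E}_I[e^{\beta\omega_x}e^{\beta\omega_y}]}{\mathbb{E}_{I'}[e^{\beta\omega_x}]\mathbb{E}_{I''}[e^{\beta\omega_y}]} \;\leq\; K^4 \, \frac{\mathbb{E}[e^{\beta\omega_x+\beta\omega_y}]}{\mathbb{E}[e^{\beta\omega_x}]\mathbb{E}[e^{\beta\omega_y}]} \;\leq\; K^5,
\]
so the candidate function
$\Lambda(\beta) \coloneqq \log\sup_{x,y,I,I',I''} (\text{LHS})/(\text{RHS without }e^{\Lambda})$
is well-defined and bounded on $[0,T]$. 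At $\beta = 0$ every tilt collapses to the identity and each ratio equals $1$, so $\Lambda(0)=0$. For continuity in $\beta$, I would differentiate $\log\mathbb{E}_J[e^{\beta\omega_x}]$ in $\beta$, recognize the derivative as a further-tilted mean of $\omega_x$, and use Step~1 together with (\ref{eqn: finit exponential moment of omega}) to obtain a Lipschitz estimate that is uniform over $J$ and $\beta\in[0,T]$; uniform Lipschitz continuity of the family $\{\log\Phi_{(x,y,I,I',I'')}\}$ transfers to continuity of the supremum. The explicit form $\log\kappa_1(\beta)\kappa_2(\beta) + \log\mathbb{E}[e^{2\beta\omega_{1,0}}] + 2\log\mathbb{E}[e^{-\beta\omega_{1,0}}]$ referenced in Theorem~\ref{thm: main results for beta*>0} would then be read off by separating the density-comparison constants (which define $\kappa_1$ and $\kappa_2$) from the moment terms appearing in the one- and two-point conditional bounds.

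The main obstacle is the non-cone generalization in Step~1: Lemma~\ref{lem: kappa-bound for a cone} exploits the cone's geometry to ensure the summability in its proof, whereas here $J$ is essentially arbitrary and $\partial J$ may be irregular. Producing a bound uniform in the shape of $J$ from the $\ell^1$-exponential decay of the time-correlation is the delicate technical point, and the case where $J$ touches a neighborhood of $\{x,y\}$ forces the auxiliary conditioning trick noted above; it is also where $(\mathbf{TCG})_{C,g}$ is harder to handle than $(\mathbf{TC})_{C,g}$, because the relevant sum is then over $\Delta\times J$ rather than $\partial\Delta\times\partial J$.
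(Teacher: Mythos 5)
Your reformulation of the inequality via the tilted measures $\mathbb{P}_J$ is clean and is a genuinely different presentation from the paper's: the paper instead applies Jensen's inequality (using $t\mapsto t^{\beta/T}$ for $\beta<T$) to reduce the conditional bound at inverse temperature $\beta$ to the fixed inverse temperature $T$, picking up a factor $\kappa^{\beta/T}$, then repackages the comparison into the explicit $\kappa_1(\beta),\kappa_2(\beta)$ together with the moment factors $\mathbb{E}[e^{2\beta\omega_{1,0}}]$ and $\mathbb{E}[e^{-\beta\omega_{1,0}}]^2$ via H\"older. A structural point you should note: the paper never applies the TC/TCG comparison to the pair $\{x,y\}$ itself. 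The numerator is first decoupled from $I$ by conditioning, and then $\mathbb{E}[e^{\beta\omega_x}e^{\beta\omega_y}]$ is bounded by $\mathbb{E}[e^{2\beta\omega_{1,0}}]$ outright by Cauchy--Schwarz; the denominator factors are lower-bounded via $1/\mathbb{E}[e^{\beta\omega_x}|\mathscr{F}_{I'}]\leq\mathbb{E}[e^{-\beta\omega_x}|\mathscr{F}_{I'}]$. This is why the paper's $\Lambda(\beta)$ is fully explicit, which is what Theorem~\ref{thm: main results for beta*>0} actually needs. Your ``$\Lambda=\log\sup(\cdot)$'' definition and the intermediate $K^5$ are correct modulo the fact that $\mathbb{E}[e^{\beta\omega_x+\beta\omega_y}]/(\mathbb{E}[e^{\beta\omega_x}]\mathbb{E}[e^{\beta\omega_y}])$ is not bounded by a $\beta$-independent $K$; you acknowledge this at the end, but it means the bookkeeping in the display with $K^4,K^5$ is not quite what you would actually write.

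The genuine gap is in your Step~1, and it is the one you yourself flag as ``the delicate technical point.'' You want $K^{-1}\mathbb{E}[e^{\beta\omega_{n,z}}]\leq\mathbb{E}[e^{\beta\omega_{n,z}}\mid\mathscr{F}_J]\leq K\,\mathbb{E}[e^{\beta\omega_{n,z}}]$ almost surely for \emph{arbitrary} finite $J\not\ni(n,z)$. When $J$ contains a nearest neighbor of $(n,z)$, Definitions~\ref{def: time-correlated}--\ref{def: Guo's time-correlated} give no information (they require $d_1(\Delta,V^c)>1$), and the proposed repair --- ``absorb the finitely many neighboring coordinates into a preliminary conditioning step, estimating their contribution using (\ref{eqn: finit exponential moment of omega})'' --- does not work: (\ref{eqn: finit exponential moment of omega}) is an unconditional moment bound and says nothing about the conditional law $\mathbb{P}(\omega_{n,z}\in\cdot\mid\omega_{n',z'})$ for adjacent $(n',z')$. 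A Gaussian field with strong nearest-neighbor correlation has $\mathbb{E}[e^{\beta\omega_{n,z}}\mid\omega_{n',z'}]$ growing exponentially in $\omega_{n',z'}$, so no a.s.\ constant $K$ can exist. A correct version of Step~1 must restrict $J$ to avoid $\partial\{(n,z)\}$ (and for (TC) one must also check the $\partial\Delta\times\partial A$ summability, which your $\ell^1$-tail estimate handles). To be fair, Lemma~\ref{lem: kappa-bound for a cone} in the paper carries the same $d_1>1$ (and cone) hypotheses, and the paper's proof of the present lemma invokes it without adding those hypotheses to the statement --- so the lemma as printed is too generous and the paper shares the gap. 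But your proposed fix is not a fix, and you should not present the ``drop the cone and the $d_1>1$ hypothesis'' strengthening as achievable.
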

    \begin{proof}
        We restrict to the case $\beta<1$ without loss of generality. For the $\beta\geq1$ case, we place some time point $T>\beta$ and $\beta/T$ when applying Jensen's inequality. Letting $\mathscr{F}_I\coloneqq\sigma(\omega_z:\,z\in I)$, we have    
        \[
            \mathbb{E}\big[e^{\beta\omega_x}e^{\beta\omega_y}\big|\mathscr{F}_I\big] \leq \mathbb{E}\big[e^{\omega_x+\omega_y}\big|\mathscr{F}_I\big]^\beta \leq \kappa^\beta \mathbb{E}\big[e^{\omega_x+\omega_y}\big]^\beta\leq\kappa_1(\beta) \mathbb{E}\big[e^{\beta\omega_x}e^{\beta\omega_y}\big],
        \]
        where
        \[
            \kappa_1(\beta)\coloneqq\sup\bigg\{\kappa^\beta \bigg(\frac{\mathbb{E}[e^{\omega_x+\omega_y}]^\beta}{\mathbb{E}[e^{\beta\omega_x}e^{\beta\omega_y}]}\bigg):\,\normx{x-y}_{L^1(\mathbb{N}\times\mathbb{Z}^d)}\leq r_0\bigg\},\qquad r_0\coloneqq\min\{\ell>d:\,\tfrac{\pi^2}{6}\ell^2 e^{-gt\ell}K_1\ell^2<1\}.
        \]
        Therefore, by Hölder's inequality we have
        \[
            \mathbb{E}\big[e^{\beta\omega_x}e^{\beta\omega_y}\prod_{z\in I}e^{\beta\omega_z}\big] \leq \mathbb{E}\big[e^{2\beta\omega_{1,0}}] \mathbb{E}\big[\prod_{z\in I}e^{\beta\omega_z}\big].
        \]
        On the other hand,
        \[
            \frac{1}{\mathbb{E}[e^{\beta\omega_x}|\mathscr{F}_{I^\prime}]} \leq \mathbb{E}\big[e^{-\beta\omega_x}\big|\mathscr{F}_{I^\prime}\big]\leq \kappa_2(\beta)^{1/2} \mathbb{E}\big[e^{-\beta\omega_{1,0}}],
        \]
        where
        \[
            \kappa_2(\beta)\coloneqq \kappa^{2\beta}\bigg(\frac{\mathbb{E}[e^{-\omega_{1,0}}]^\beta}{\mathbb{E}[e^{-\beta\omega_{1,0}}]}\bigg)^2.
        \]
        Collectively, with a similar estimate for $\mathbb{E}[e^{\beta\omega_y}|\mathscr{F}_{I^{\prime\prime}}]$, we get
        \begin{equation}\label{eqn: function Lambda}        
            \Lambda(\beta)\coloneqq \log \kappa_1(\beta)\kappa_2(\beta) + \log\mathbb{E}\big[e^{2\beta\omega_{1,0}}] + 2\log\mathbb{E}\big[e^{-\beta\omega_{1,0}}],
        \end{equation}
        which verifies the claim.
    \end{proof}   
    To ease notation, we will also write
    \[
        \Lambda_l(\beta)\coloneqq \log \kappa_1(\beta)\kappa_2(\beta) + \log\mathbb{E}\big[e^{2\beta\omega^l_{1,0}}] + 2\log\mathbb{E}\big[e^{-\beta\omega^l_{1,0}}]
    \]
    for each $l\geq1$. And thus $\Lambda_l(\beta)\xrightarrow[]{l\to\infty}\Lambda(\beta)$ for each $0<\beta\leq1$. Let $r_0\coloneqq\min\{\ell>d:\,\frac{\pi^2}{6}\ell^2 e^{-gt\ell}K_1\ell^2<1\}$ in $\mathbb{N}$. Via the extended Hölder's inequality for infinite products \cite{Karakostas}, we have
    \begin{equation}\label{eqn: martingale l2}
        \mathbb{E}E_Q\big[(\mathcal{H}_{n,\beta})^2\big] \leq E_QE^{S\otimes\widetilde{S}}_{0,0}\big[e^{\sum_{j=1}^n\frac{1}{2}V(\Vec{X}_j,\Vec{Y}_j)}\big] \leq E_Q E^{S\otimes\widetilde{S}}_{0,0} \big[ e^{\sum_{j=1}^\infty\frac{1}{2}V(\Vec{X}_j,\Vec{Y}_j) \mathbbm{1}\{d_1(\Vec{X}_j,\Vec{Y}_j)<r_0\} } \big]^{1/2}\prod_{k=r_0}^\infty A_k^{6/(\pi^2k^2)},
    \end{equation}
    where we write
    \[
        A_k\coloneqq E_QE^{S\otimes\widetilde{S}}_{0,0}\big[e^{\frac{\pi^2}{6}k^2e^{-gtk}\sum_{j=1}^\infty \mathbbm{1}\{d_1(\Vec{X}_j,\Vec{Y}_j=k)\} }\big],\qquad\forall~k\geq r_0.
    \]
    Using a version of Khas'minskii's lemma \cite[Proposition 4.1.1]{DaPrato/Zabczyk}, we get
    \begin{equation*}\begin{aligned}
        A_k &= \sum_{n=0}^\infty \frac{(\tfrac{\pi^2}{6}k^2 e^{-gtk})^n}{n!} E_QE^{S\otimes\widetilde{S}}_{0,0}\big[\big(\sum_{j=1}^\infty \mathbbm{1}\{d_1(\Vec{X}_j,\Vec{Y}_j)=k\} \big)^n\big]\\
        &\leq \sum_{n=0}^\infty (\tfrac{\pi^2}{6}k^2e^{-gtk})^n \sum_{1\leq j_1\leq\cdots\leq j_n} E_QE^{S\otimes\widetilde{S}}_{0,0}\big[\prod_{\ell=1}^n \mathbbm{1}\{d_1(\Vec{X}_{j_\ell},\Vec{Y}_{j_\ell}=k)\} \big].
    \end{aligned}\end{equation*}
    And then we have
    \begin{equation*}\begin{aligned}
        A_k &\leq \sum_{n=0}^\infty (\tfrac{\pi^2}{6}k^2e^{-gtk})^n \sum_{1\leq j_1\leq\cdots\leq j_{n-1}} E_QE^{S\otimes\widetilde{S}}_{0,0}\big[\prod_{\ell=1}^{n-1}\mathbbm{1}\{d_1(\Vec{X}_{j_\ell},\Vec{Y}_{j_\ell})=k\}\\
        &\quad\vdot \sum_{j_n\geq j_{n-1}} P^{S\otimes\widetilde{S}}_{S_{\tau_{j_{n-1}}},\widetilde{S}_{\tau_{j_{n-1}}}}\otimes Q\big(d_1(\Vec{X}_{j_n-j_{n-1}},\Vec{Y}_{j_n-j_{n-1}})\leq k\big) \big].
    \end{aligned}\end{equation*}
    Invoking Lemma \ref{lem: bound for ball probability}, we then have
    \begin{equation*}\begin{aligned}
        A_k &\leq \sum_{n=0}^\infty (\tfrac{\pi^2}{6}k^2 e^{-gtk})^nK_1k^2 \sum_{1\leq j_1\leq\cdots\leq j_{n-1}} E_QE^{S\otimes\widetilde{S}}_{0,0}\big[\prod_{\ell=1}^{n-1} \mathbbm{1}\{d_1(\Vec{X}_{j_\ell},\Vec{Y}_{j_\ell}=k)\} \big]\\
        &\leq \sum_{n=0}^\infty (\tfrac{\pi^2}{6}k^2e^{-gtk}K_1k^2)^n\leq\frac{1}{1-\frac{\pi^2}{6}k^2e^{-gtk}K_1k^2},\qquad\forall~k\geq r_0.
    \end{aligned}\end{equation*}
    Henceforth, we have
    \begin{equation}\label{eqn: ak}
        \prod_{k=r_0}^\infty A_k^{6/(\pi^2k^2)} \leq \prod_{k=r_0}^\infty \bigg(\frac{1}{1-\frac{\pi^2}{6}k^2e^{-gtk}K_1k^2}\bigg)^{6/(\pi^2k^2)}\leq K_2<\infty.
    \end{equation}
    Invoking Lemma \ref{lem: function Lambda(beta)}, now, we observe that
    \begin{equation}\begin{aligned}\label{eqn: next}
        &E_Q E^{S\otimes\widetilde{S}}_{0,0} \big[e^{\sum_{j=1}^\infty V(\Vec{X}_j,\Vec{Y}_j)\mathbbm{1}\{d_1(\Vec{X}_j,\Vec{Y}_j)<r_0\} }\big]\\ 
        &\qquad\leq E_Q E^{S\otimes\widetilde{S}}_{0,0} \big[e^{\sum_{j=1}^\infty 2\Lambda(\beta)N^{r_0}_j(\Vec{X},\Vec{Y})\mathbbm{1}\{d_1(\Vec{X}_j,\Vec{Y}_j)<r_0\} }\big]^{1/2}\prod_{k=r_0}^\infty B_k^{6/(\pi^2k^2)}
    \end{aligned}\end{equation}
    for some constant $K^\prime>0$, where the continuous function $\Lambda(\vdot)$ is defined by $\Lambda(\beta)\coloneqq\max\{2\Lambda_1(\beta),\Lambda_2(\beta)\}$, and we write
    \[
        B_k\coloneqq E_QE^{S\otimes\widetilde{S}}_{0,0}\big[ e^{\frac{\pi^2}{6}k^2 e^{-gtk} \sum_{j=1}^\infty \mathbbm{1}\{ d_1(\Vec{X}_j,\Vec{Y}_j)<r_0,\,d_1(\Vec{X}_j^\prime,\Vec{Y}_j^\prime)=k \} } \big],\qquad\forall~k\geq r_0.
    \]
    Here we use $\Vec{X}_j^\prime$ to denote the vertices of $\Vec{X}_j$ minus those at $\ell_1$-distance less than $r_0$ with $\Vec{Y}_j$. Following an almost identical argument to $A_k$, we derive
    \begin{equation}\label{eqn: dog a4}
        \prod_{k=r_0}^\infty B_k^{6/(\pi^2k^2)}\leq \prod_{k=r_0}^\infty \bigg(\frac{1}{1-\frac{\pi^2}{6}k^2e^{-gtk}K_1k^2}\bigg)^{6/(\pi^2k^2)}\leq K_2<\infty.
    \end{equation}
    Henceforth, by (\ref{eqn: next}) and (\ref{eqn: dog a4}),
    \begin{equation*}\begin{aligned}
        &E_Q E^{S\otimes\widetilde{S}}_{0,0} \big[ e^{\sum_{j=1}^\infty V(\Vec{X}_j,\Vec{Y}_j)\mathbbm{1}\{ d_1(\Vec{X}_j,\Vec{Y}_j)<r_0 \} } \big]^2\\
        &\qquad\leq K_2^2 E_Q E^{S\otimes\widetilde{S}}_{0,0}\big[ e^{\sum_{j=1}^\infty 2\Lambda_l(\beta)N^{r_0}_j(\Vec{X},\Vec{Y}) \mathbbm{1}\{ d_1(\Vec{X}_j,\Vec{Y}_j)<r_0 \} }\big]\\
        &\qquad\leq \sum_{n=0}^\infty K_2^2 \frac{2^n\Lambda_l(\beta)^n}{n!} E_Q E^{S\otimes\widetilde{S}}_{0,0} \bigg[ \bigg( \sum_{j=1}^\infty N^{r_0}_j(\Vec{X},\Vec{Y})\mathbbm{1}_{\{ d_1(\Vec{X}_j,\Vec{Y}_j)<r_0 \}} \bigg)^n \bigg].
    \end{aligned}\end{equation*}
    Therefore, by Lemma \ref{lem: probability estimate}, we have
    \begin{equation}\label{eqn: rest of ak}
        E_Q E^{S\otimes\widetilde{S}}_{0,0} \big[ e^{\sum_{j=1}^\infty V(\Vec{X}_j,\Vec{Y}_j)\mathbbm{1}\{ d_1(\Vec{X}_j,\Vec{Y}_j)<r_0 \} } \big]^2 \leq \frac{K_3}{1-2\Lambda_l(\beta)K_1r_0^2}<\infty
    \end{equation}
    whenever $\Lambda(\beta)K_1r_0^2<1/2$ and $l$ sufficiently large. Combining (\ref{eqn: ak}) and (\ref{eqn: rest of ak}), we resolve the estimate of (\ref{eqn: martingale l2}). And this verifies the assertion of Lemma \ref{lem: square-integrable martingale}.

\section{Technical lemmas}
    We extend \cite[Theorem 2.5, Lemma 3.2]{Comets/Shiga/Yoshida} which was established for the i.i.d. underlying field to the more general time-correlated field $\omega$. And we introduce the exponential factor $\Delta_{x,j}$ in the following refined proof to cancel the mixing nature of the correlated and non-i.i.d. environment.
    \begin{lemma}\label{lem: how to estimate Z and hat Z ratio}
        \normalfont
        With the same notational conventions as \textit{Step III.} of the proof of Theorem \ref{thm: quenched and annealed free energies exist}, we have
        \[
            \mathbb{E}\big[e^{2 \mathbb{E}[\text{log}(Z^{\beta,\omega}_N/\hat{Z}^{\beta,\omega}_{j,N})|\mathscr{K}_{j-1}] }\big]^{1/2} + \mathbb{E}\big[e^{-2 \mathbb{E}[\text{log}(Z^{\beta,\omega}_N/\hat{Z}^{\beta,\omega}_{j,N})|\mathscr{K}_{j-1}] }\big]^{1/2} \leq C(\beta),\qquad\forall~j\leq N,
        \]
        for some positive constant $C(\beta)$.
    \end{lemma}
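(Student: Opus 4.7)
The plan is to apply conditional Jensen's inequality twice and then exploit the multiplicative structure of the ratio $Z^{\beta,\omega}_N/\hat{Z}^{\beta,\omega}_{j,N}$ together with the time-correlation control. First, since $y\mapsto e^{\pm 2y}$ is convex, the conditional Jensen inequality gives $\exp(\pm 2\mathbb{E}[\log(Z^{\beta,\omega}_N/\hat{Z}^{\beta,\omega}_{j,N})|\mathscr{K}_{j-1}])\leq\mathbb{E}[(Z^{\beta,\omega}_N/\hat{Z}^{\beta,\omega}_{j,N})^{\pm 2}|\mathscr{K}_{j-1}]$. Taking expectations, the claim reduces to showing $\mathbb{E}[(Z^{\beta,\omega}_N/\hat{Z}^{\beta,\omega}_{j,N})^{\pm 2}]\leq C(\beta)^2$ for some constant depending only on $\beta$, uniformly in $N$ and $j\leq N$.

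Next I would reformulate the ratio as a convex combination by introducing normalized weights $\tilde{\alpha}_{x,j}:=e^{\Delta_{j,x}}\alpha_{x,j}$, using the $\alpha_{x,j}$ from Step III of the proof of Theorem \ref{thm: quenched and annealed free energies exist}. A direct computation yields $\sum_x\tilde{\alpha}_{x,j}=1$ and $Z^{\beta,\omega}_N/\hat{Z}^{\beta,\omega}_{j,N}=\sum_x\tilde{\alpha}_{x,j}e^{\beta\omega_{j,x}-\Delta_{j,x}}$. Because $y\mapsto y^{\pm 2}$ is convex on $(0,\infty)$, a second Jensen inequality delivers the pointwise bound $(Z^{\beta,\omega}_N/\hat{Z}^{\beta,\omega}_{j,N})^{\pm 2}\leq\sum_x\tilde{\alpha}_{x,j}e^{\pm 2(\beta\omega_{j,x}-\Delta_{j,x})}$, so the problem reduces further to estimating $\sum_x\mathbb{E}[\tilde{\alpha}_{x,j}e^{\pm 2(\beta\omega_{j,x}-\Delta_{j,x})}]$ by a constant.

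Both $\tilde{\alpha}_{x,j}$ and $\Delta_{j,x}$ are measurable with respect to $\mathscr{F}_{\neq j}:=\sigma(\omega_{k,z}:k\neq j)$, so conditioning on this $\sigma$-field peels off the $\omega_{j,x}$ factor. Using the Markov property plus the exponential decay in the TC (or TCG) condition, I expect a uniform bound $\mathbb{E}[e^{\pm 2\beta\omega_{j,x}}|\mathscr{F}_{\neq j}]\leq\kappa\mathbb{E}[e^{\pm 2\beta\omega_{1,0}}]$ for some $\kappa=\kappa(C,g)>0$, in the same spirit as Lemma \ref{lem: kappa-bound for a cone}. Combined with the analogous bound $e^{2\Delta_{j,x}}\leq\kappa$ coming from $\mathbb{E}[e^{2\beta\omega_{j,x}}|\mathscr{K}_{j-1}]\leq\kappa\mathbb{E}[e^{2\beta\omega_{1,0}}]$, the positive direction collapses to $\kappa\mathbb{E}[e^{2\beta\omega_{1,0}}]\sum_x\mathbb{E}[\tilde{\alpha}_{x,j}]=\kappa\mathbb{E}[e^{2\beta\omega_{1,0}}]$, and a symmetric argument yields $\mathbb{E}[(Z^{\beta,\omega}_N/\hat{Z}^{\beta,\omega}_{j,N})^{-2}]\leq\kappa^2\mathbb{E}[e^{-2\beta\omega_{1,0}}]$. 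Adding the two square roots produces the desired $C(\beta)$.

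The principal obstacle is justifying the conditional moment bound $\mathbb{E}[e^{\pm 2\beta\omega_{j,x}}|\mathscr{F}_{\neq j}]\leq\kappa\mathbb{E}[e^{\pm 2\beta\omega_{1,0}}]$: the set $\{k\neq j\}\times\mathbb{Z}^d$ is neither finite nor cone-like, so Lemma \ref{lem: kappa-bound for a cone} does not apply verbatim. One must instead enlarge $\{(j,x)\}$ to a finite set $V\ni(j,x)$ with $d_1(\{(j,x)\},V^c)>1$ on which the TC definition can be invoked, then use the Markov property to reduce the conditioning to the adjacent time slices $\{j\pm 1\}\times\mathbb{Z}^d$ bordering $\{j\}\times\mathbb{Z}^d$; the decay of the $e^{-g|\cdot|}$ kernel keeps the resulting boundary sum uniformly summable independently of $j$ and $x$. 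The remaining steps are routine bookkeeping relying on the finite exponential moment condition (\ref{eqn: finit exponential moment of omega}).
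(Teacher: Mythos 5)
Your proposal is correct and reaches the conclusion, but along a genuinely different route than the paper for the negative-exponent term. For the $+$ direction both arguments land on $\mathbb{E}[(Z^{\beta,\omega}_N/\hat{Z}^{\beta,\omega}_{j,N})^2]$, though by different Jensen applications: the paper first applies Jensen to the conditional expectation using concavity of $\log$ (turning $\mathbb{E}[\log X|\mathscr{K}_{j-1}]$ into $\log\mathbb{E}[X|\mathscr{K}_{j-1}]$), while you apply Jensen using convexity of $y\mapsto e^{2y}$ and tower directly. For the $-$ direction the paper cannot repeat its concavity-of-$\log$ trick (Jensen goes the wrong way there), so it introduces $\psi(u)=u-\log(1+u)$, splits on the event $\{1+U\gtrless\epsilon\}$, bounds $\psi(U)\leq\tfrac12(U/\epsilon)^2$ on one piece and uses the auxiliary variable $V=\sum_x\alpha_{x,j}(\Delta_{j,x}-\beta\omega_{j,x})$ on the other, arguing the conditional expectation is bounded $\mathbb{P}$-a.s. before exponentiating. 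Your observation that $y\mapsto e^{-2y}$ is also convex collapses the two directions into one symmetric argument, and your second Jensen step with the correctly normalized weights $\tilde{\alpha}_{x,j}=e^{\Delta_{j,x}}\alpha_{x,j}$ (satisfying $\sum_x\tilde{\alpha}_{x,j}=1$, which $\sum_x\alpha_{x,j}$ does not) is cleaner than the paper's bookkeeping, which mixes the two normalizations.

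One caveat you should make explicit rather than defer: the bound $\mathbb{E}[e^{\pm 2\beta\omega_{j,x}}\,|\,\mathscr{F}_{\neq j}]\leq\kappa\,\mathbb{E}[e^{\pm 2\beta\omega_{1,0}}]$ and the companion bound $e^{\pm 2\Delta_{j,x}}\leq\kappa$ both require comparing a conditional expectation to an unconditional one, whereas $\textbf{(TC)}_{C,g}$ and $\textbf{(TCG)}_{C,g}$ are stated as ratio bounds between two boundary conditions $\eta,\eta'$ and moreover impose $d_1(\Delta,V^c)>1$, which the naked sets $\{(j,x)\}$ versus $\{j\pm1\}\times\mathbb{Z}^d$ or versus $\mathbb{N}_{\leq j-1}\times\mathbb{Z}^d$ only meet with equality. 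Passing from the ratio form to the ``conditional $\leq\kappa\times$ unconditional'' form is standard (integrate out $\eta'$), but the distance-two requirement means one must first enlarge the conditioning window and use the Markov property in a finitary way, as you sketch. The paper itself is loose on precisely this point both here and in Lemma \ref{lem: kappa-bound for a cone}, so you are not introducing a new gap; but since you single it out as the principal obstacle, you should actually carry out the enlargement and verify that the resulting boundary sums are uniformly summable over all $(j,x)$, rather than gesture at it. With that detail supplied, your proof is complete and, I think, preferable to the paper's.
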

    \begin{proof}
        We first notice that 
        \begin{equation*}
            \mathbb{E}\big[e^{2 \mathbb{E}[\text{log}(Z^{\beta,\omega}_N/\hat{Z}^{\beta,\omega}_{j,N})|\mathscr{K}_{j-1}] }\big] \leq \mathbb{E}\big[e^{2 \text{log}\mathbb{E}[(Z^{\beta,\omega}_N/\hat{Z}^{\beta,\omega}_{j,N})|\mathscr{K}_{j-1}] }\big],
        \end{equation*}
        where 
        \[
            \mathbb{E}\big[(Z^{\beta,\omega}_N/\hat{Z}^{\beta,\omega}_{j,N})\big|\mathscr{K}_{j-1}\big] \leq \mathbb{E}\big[\sum_{x\in\mathbb{Z}^d} \alpha_{x,j} e^{\beta\omega_{j,x}-\Delta_{j,x}} \big|\mathscr{K}_{j-1}\big] \underset{\forall~(n,z)\in\mathbb{N}\times\mathbb{Z}^d}{\leq} \mathbb{E}\big[e^{2\beta\omega_{n,z}}\big]^{1/2} \mathbb{E}\big[\sum_{x\in\mathbb{Z}^d} \alpha_{x,j} \big|\mathscr{K}_{j-1}\big].
        \]
        Therefore,
        \begin{equation}\label{eqn: b1}
            \mathbb{E}\big[e^{2 \mathbb{E}[\text{log}(Z^{\beta,\omega}_N/\hat{Z}^{\beta,\omega}_{j,N})|\mathscr{K}_{j-1}] }\big]^{1/2} \leq \mathbb{E}\big[e^{2\beta\omega_{n,z}}\big]^{1/2} \leq C^\prime(\beta)<\infty.
        \end{equation}
        On the other hand, for each $u>-1$, we let $\psi(u)\coloneqq u- \log(1+u)$. In particular,
        \[
            -\mathbb{E}\big[(Z^{\beta,\omega}_N/\hat{Z}^{\beta,\omega}_{j,N})\big|\mathscr{K}_{j-1}\big] \leq 1+ \mathbb{E}\big[ \psi(U) \big|\mathscr{K}_{j-1}\big],\qquad\text{where}\quad U\coloneqq\sum_{x\in\mathbb{Z}^d}\alpha_{x,j} e^{\beta\omega_{j,x}-\Delta_{j,x}} -1.
        \]
        Now we fix some sufficiently small $0<\epsilon<1$ such that $\log\epsilon\leq-1$. It is then obvious that
        \[
            \mathbb{E}\big[\psi(U)\big|\mathscr{K}_{j-1}\big] \leq \mathbb{E}\big[\psi(U),\,1+U\geq\epsilon\big|\mathscr{K}_{j-1}\big] -  \mathbb{E}\big[\log(1+U),\,1+U\leq\epsilon\big|\mathscr{K}_{j-1}\big]
        \]
        since $U\leq\epsilon-1$ conditioned on $\{1+U\leq\epsilon\}$. Note that $\psi(U)\leq\frac{1}{2}(u/\epsilon)^2$ as long as $1+u\geq\epsilon$. So,
        \[
            2\epsilon^2\mathbb{E}\big[\psi(U),\,1+U\geq\epsilon\big|\mathscr{K}_{j-1}\big] \leq \mathbb{E}\big[U^2\big|\mathscr{K}_{j-1}\big] \leq 1+\sum_{x\in\mathbb{Z}^d} \alpha_{x,j}^2 \mathbb{E}\big[ e^{2\beta\omega_{j,x}-2\Delta_{j,x}} \big|\mathscr{K}_{j-1}\big].
        \]
        Then, for any $(n,z)\in\mathbb{N}\times\mathbb{Z}^d$,
        \begin{equation}\label{eqn: b2}
            2\epsilon^2\mathbb{E}\big[\psi(U),\,1+U\geq\epsilon\big|\mathscr{K}_{j-1}\big] \leq 1 + \mathbb{E}\big[ e^{2\beta\omega_{j,x}} \big] \sum_{x\in\mathbb{Z}^d}\alpha_{x,j}^2 \leq \Bar{C}^\prime(\beta)<\infty,\qquad\mathbb{P}\text{-a.s.,}
        \end{equation}
        by the fact that $\sum_{x\in\mathbb{Z}^d}\alpha_{x,j}^2\leq1$. Moreover, we have the following relations,
        \[
            \big\{1+U\leq\epsilon\big\}\subseteq \big\{-V\leq(1+U)\leq\log\epsilon\big\}\subseteq\big\{\log(1+U)\leq V\big\}\cap\big\{1\leq V\big\}
        \]
        with $V\coloneqq\sum_{x\in\mathbb{Z}^d}\alpha_{x,j}(\Delta_{j,x}-\beta\omega_{j,x})$. Henceforth,
        \[
            -\mathbb{E} \big[\log(1+U),\,1+U\leq\epsilon\big|\mathscr{K}_{j-1}\big] \leq \mathbb{E}\big[V,\,1\leq V\big|\mathscr{K}_{j-1}\big]\leq \mathbb{E}\big[e^V\big|\mathscr{K}_{j-1}\big] \leq \sum_{x\in\mathbb{Z}^d} \alpha_{x,j} \mathbb{E}\big[e^{\beta\omega_{j,x}-\Delta_{j,x}}\big|\mathscr{K}_{j-1}\big],
        \]
        where the last inequality is due to Jensen's inequality. Hence for any $(n,z)\in\mathbb{N}\times\mathbb{Z}^d$,
        \begin{equation}\label{eqn: b3}                  
            -\mathbb{E} \big[\log(1+U),\,1+U\leq\epsilon\big|\mathscr{K}_{j-1}\big] \leq\sum_{x\in\mathbb{Z}^d} \alpha_{x,j} \mathbb{E}\big[ e^{2\beta\omega_{j,x}-2\Delta_{j,x}}  \big|\mathscr{K}_{j-1} \big]^{1/2}\leq \mathbb{E}\big[e^{2\beta\omega_{n,z}}\big]^{1/2} \sum_{x\in\mathbb{Z}^d} \alpha_{x,j} \leq \Bar{C}^{\prime\prime}(\beta), 
        \end{equation}
        $\mathbb{P}$-a.s. Combining (\ref{eqn: b2}) and (\ref{eqn: b3}), we thus get
        \begin{equation}\label{eqn: b4}
            \mathbb{E}\big[e^{-2 \mathbb{E}[\text{log}(Z^{\beta,\omega}_N/\hat{Z}^{\beta,\omega}_{j,N})|\mathscr{K}_{j-1}] }\big]^{1/2} \leq \mathbb{E}\big[e^{2+\epsilon^{-2}\Bar{C}^\prime(\beta)+\Bar{C}^{\prime\prime}(\beta)}\big]^{1/2}\leq C^{\prime\prime}(\beta)<\infty.
        \end{equation}
        Viewing (\ref{eqn: b1}) and (\ref{eqn: b4}) and letting $C(\beta)\coloneqq C^\prime(\beta)+C^{\prime\prime}(\beta)$, we have proved the assertion.
    \end{proof}

    For any Borel measurable function $f:\mathbb{R}\to\mathbb{R}$ and finite time $T>0$, we use $\normx{f}_T\coloneqq\esssup\{\abs{f(t)}:\,0\leq t\leq T\}$ to denote its $L^\infty$-norm up to time $T$.

    \begin{lemma}\label{lem: L2 space for processes L and W}
        \normalfont
        For each $n\in\mathbb{N}$ and $T>0$, we have
        \[
            \mathbb{E}\big[\normy{(W^{\beta,\omega}_N)^{-1}}_T^2\big]<\infty\qquad\text{and}\qquad \mathbb{E}\big[\normy{\frac{\partial}{\partial\beta}\log W^{\beta,\omega}_N}_T^2\big]<\infty,
        \]
        where the $L^\infty$-norm is with respect to $\beta\geq0$.
    \end{lemma}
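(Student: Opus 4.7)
The plan is to bound both $\normy{(W^{\beta,\omega}_N)^{-1}}_T$ and $\normy{\partial_\beta \log W^{\beta,\omega}_N}_T$ pointwise in $\omega$ by $\omega$-measurable random variables that do not depend on $\beta$, so that the $L^\infty$-norm in $\beta$ collapses at once to a plain random variable. All remaining $L^2(\P)$-integrability then follows from the finite exponential moment (\ref{eqn: finit exponential moment of omega}) combined with the $\kappa$-correlation Lemma \ref{lem: kappa-bound for a cone}. The key observation is that for $N$ fixed, the partition function $Z^{\beta,\omega}_N$ depends only on the finitely many coordinates $\omega_{k,x}$ with $k\leq N$ and $x$ in the reachable range of $(S_n)_{n\leq N}$, which is a finite subset of $\mathbb{Z}^d$ because $\normx{S_1}_1<\infty$.

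For the first bound I would fix once and for all one admissible trajectory $S^\ast$ with $p_\ast\coloneqq P^S_0(S_k=S^\ast_k,\,1\leq k\leq N)>0$, which exists since the increments of $(S_n)$ are i.i.d.\ and bounded. This gives the deterministic lower bound $Z^{\beta,\omega}_N\geq p_\ast e^{\beta\Sigma^\ast}$ with $\Sigma^\ast\coloneqq\sum_{k=1}^N\omega_{k,S^\ast_k}$, while Lemma \ref{lem: kappa-bound for a cone} yields the deterministic upper bound $\E[Z^{\beta,\omega}_N]\leq\kappa^N\E[e^{\beta\omega_{1,0}}]^N\leq C(T,N)<\infty$ uniformly on $\beta\in[0,T]$. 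Combining these gives
\[
    \sup_{\beta\in[0,T]}(W^{\beta,\omega}_N)^{-1}\leq p_\ast^{-1}C(T,N)\sup_{\beta\in[0,T]}e^{-\beta\Sigma^\ast}\leq p_\ast^{-1}C(T,N)\bigl(1+e^{T\abs{\Sigma^\ast}}\bigr),
\]
so squaring and taking $\E$ reduces to controlling $\E[e^{\pm 2T\Sigma^\ast}]$, which is finite by another application of Lemma \ref{lem: kappa-bound for a cone} and (\ref{eqn: finit exponential moment of omega}).

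For the second bound I would differentiate to get
\[
    \frac{\partial}{\partial\beta}\log W^{\beta,\omega}_N = E^{P^{\beta,\omega}_N}\Bigl[\sum_{k=1}^N\omega_{k,S_k}\Bigr]-\frac{d}{d\beta}\log\E[Z^{\beta,\omega}_N].
\]
The first summand is bounded in absolute value by the $\beta$-free random variable $M\coloneqq\sum_{k=1}^N\max_{\abs{x}_1\leq k\normx{S_1}_1}\abs{\omega_{k,x}}$, a finite sum of finite maxima of $\omega$-coordinates each with finite exponential moment, so $M\in L^2(\P)$. The second summand is deterministic in $\omega$ and equals $\E[Z^{\beta,\omega}_N]^{-1}\frac{d}{d\beta}\E[Z^{\beta,\omega}_N]$, a smooth function of $\beta$ bounded on $[0,T]$ by some $C^\prime(T,N)$ thanks once more to Lemma \ref{lem: kappa-bound for a cone} applied to both numerator and denominator. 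Hence $\normy{\partial_\beta\log W^{\beta,\omega}_N}_T\leq M+C^\prime(T,N)\in L^2(\P)$.

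The only real care in the argument is the bookkeeping needed to verify that $C(T,N)$ and $C^\prime(T,N)$ depend only on $T$, $N$, and the walk law (not on $\omega$ or $\beta$), and to justify differentiation under $\E$ together with the interchange of $\sup_{\beta}$ with $\E$. Since $Z^{\beta,\omega}_N$ is a finite sum of smooth, positive functions of $\beta$ whose only random inputs are the finitely many $\omega_{k,x}$ described above, all such interchanges are routine, so beyond this tracking I do not anticipate any serious obstacle.
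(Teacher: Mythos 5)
Your argument is correct, and it follows a genuinely different (and more elementary) path than the paper. The paper bounds $(W^{\beta,\omega}_N)^{-1}$ by first applying Jensen's inequality to get $(Z^{\beta,\omega}_N)^{-1}\leq E^S_0[e^{-\beta\sum_k\omega_{k,S_k}}]$ and then invokes Lemma~\ref{lem: kappa-bound for a cone} twice to produce the quantitative estimate $\kappa^{2N}\,\mathbb{E}[e^{2T\omega_{n,z}}]$; for $\partial_\beta\log W^{\beta,\omega}_N$ it writes out $\mathbb{E}[Z^{\beta,\omega}_N]\,\partial_\beta W^{\beta,\omega}_N$ explicitly and chains the mixing lemmas to land on $N^{1/2}\kappa^{2N}C(\beta)\,\mathbb{E}[Z^{3T,\omega}_N]^{1/2}$. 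You instead collapse the $\beta$-supremum immediately by (i) a single-trajectory lower bound $Z^{\beta,\omega}_N\geq p_\ast e^{\beta\Sigma^\ast}$ together with a deterministic upper bound on $\sup_{\beta\leq T}\mathbb{E}[Z^{\beta,\omega}_N]$, and (ii) the $\beta$-free majorant $M=\sum_k\max_{\abs{x}_1\leq k\normx{S_1}_1}\abs{\omega_{k,x}}$ for the polymer average. Both are perfectly valid for fixed $N$ and rely on the same ingredients (finite exponential moments and the $\kappa$-lemma, or Hölder, to control joint moments of finitely many $\omega$-coordinates). What you give up is the explicit $\kappa^{2N}$-type dependence that the paper records and later uses in the FKG step of the proof of Theorem~\ref{thm: quenched and annealed free energies exist}; what you gain is a shorter argument that avoids the slightly delicate interchange of $\sup_\beta$ with $E^S_0$ hidden in the paper's intermediate steps, since your dominating random variables are manifestly $\beta$-independent. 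One small point: when bounding $\sup_{\beta\in[0,T]}(W^{\beta,\omega}_N)^{-1}$ you should state explicitly that $\sup_{\beta\in[0,T]}e^{-\beta\Sigma^\ast}=\max\{1,e^{-T\Sigma^\ast}\}\leq 1+e^{T\abs{\Sigma^\ast}}$, as you do; that resolves the only place where the $\beta$-sup needs care.
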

    \begin{proof}
        We show the $L^2$-boundedness. By Jensen's inequality, it is clear that
        \[
            (W^{\beta,\omega}_N)^{-1} \leq \mathbb{E} \big[Z^{\beta,\omega}_N\big] \vdot E^S_0\big[e^{\beta\sum_{k=1}^N \omega_{k,S_k} }\big],\qquad\forall~n\in\mathbb{N}\quad\text{and}\quad\beta\geq0.
        \]
        Invoking Lemma \ref{lem: kappa-bound for a cone}, we get
        \begin{equation}\begin{aligned}\label{eqn: L2 one}                    
            \mathbb{E}\big[\normy{(W^{\beta,\omega}_N)^{-1}}_T^2\big]^{1/2} &\leq \kappa^N \mathbb{E}\big[e^{T\omega_{n,z}}\big]\vdot \mathbb{E}\big[E^S_0\big(e^{T\sum_{k=1}^N \omega_{k,S_k} }\big)^2\big]^{1/2}\\
            &\leq \kappa^{2N} \mathbb{E}\big[e^{2T\omega_{n,z}}\big] <\infty,\qquad\forall~(n,z)\in\mathbb{N}\times\mathbb{Z}^d.
        \end{aligned}\end{equation}
        On the other hand, for each $\beta\geq0$, 
        \[
            \mathbb{E} [Z^{\beta,\omega}_N] \frac{\partial}{\partial\beta}W^{\beta,\omega}_N = E^S_0\big[\big(\sum_{k=1}^N \omega_{k,S_k} - \frac{1}{\mathbb{E}[Z^{\beta,\omega}_N]} \frac{\partial}{\partial\beta} \mathbb{E}[Z^{\beta,\omega}_N]  \big) e^{\beta \sum_{k=1}^N \omega_{k,S_k} } \big].
        \]
        In particular, we have
        \[
            \frac{\partial}{\partial\beta} \mathbb{E}\big[Z^{\beta,\omega}_N\big]  = \mathbb{E} E^S_0\big[ \big(\sum_{k=1}^N \omega_{k,S_k}\big) e^{\beta\sum_{k=1}^N \omega_{k,S_k} } \big],\qquad\forall~\beta\geq0.
        \]
        Henceforth, $N^{-1} \mathbb{E}[\normx{\frac{\partial}{\partial\beta} \log W^{\beta,\omega}_N }_T^2]$ is less than or equal to
        \[
             E_{\mathbb{P}\otimes Q} E^S_0\bigg[\sum_{\ell=1}^N \bigg(  \omega_{\ell,S_\ell} - \frac{1}{\mathbb{E}[Z^{T,\omega}_N]} \mathbb{E} E^S_0[ \omega_{\ell,S_\ell} e^{T\sum_{k=1}^N \omega_{k,S_k} } ] \bigg)^2 e^{2T\sum_{k=1}^N \omega_{k,S_k} } \bigg].
        \]
        Therefore, invoking Lemma \ref{lem: tau mixing inequality} again, we observe that $ \mathbb{E}[\normx{\frac{\partial}{\partial\beta} \log W^{\beta,\omega}_N }_T^2]^{1/2}$ is less than or equal to
        \begin{equation*}
            \quad N^{1/2}\kappa^{2N} C(\beta)\vdot \mathbb{E} E^S_0\bigg[\sum_{\ell=1}^N \bigg(  \omega_{\ell,S_\ell} - \frac{1}{\mathbb{E}[e^{T\omega_{\ell,S_\ell}}]} \mathbb{E} E^S_0[ \omega_{\ell,S_\ell} e^{T \omega_{\ell,S_\ell} } ] \bigg)^2 e^{2T\omega_{\ell,S_\ell} } \bigg]^{1/2},
        \end{equation*}
        Henceforth,
        \begin{equation}\label{eqn: L2 two}
            \mathbb{E}\big[\normx{\frac{\partial}{\partial\beta} \log W^{\beta,\omega}_N }_T^2\big]^{1/2} \leq  N^{1/2}\kappa^{2N} C(\beta)\vdot \mathbb{E}\big[Z^{3T,\omega}_N\big]^{1/2}<\infty.
        \end{equation}
        And the assertion is verified.
    \end{proof}

    One should observe that, as a continuous convex function, the annealed free energy $\beta\mapsto\lambda(\beta)$ admits left-continuous left-derivative $\lambda^\prime_-(\beta)$ and right-continuous right-derivative $\lambda^\prime_+(\beta)$ at each $\beta\geq0$. Of interest in its own right, we establish the (Gâteaux) differentiability of $\lambda(\vdot)$, showing that $\lambda^\prime_-(\beta)=\lambda^\prime_+(\beta)$ for each $\beta$. Notice that the notion of Fréchet and Gâteaux differentials \cite{Zorn} are identical on the real line.
    
    \begin{lemma}\label{lem: annealed free energy is differentiable}
        \normalfont
        The limiting annealed free energy $\beta\mapsto\lambda(\beta)$ is Gâteaux differentiable on $[0,\infty)$.
    \end{lemma}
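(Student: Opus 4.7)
The plan is to transfer differentiability and convexity of the finite-$N$ approximants $\lambda_N(\beta):=\frac{1}{N}\log\mathbb{E}[Z^{\beta,\omega}_N]$ to the limit $\lambda$ by an equicontinuity argument. Since $\mathbb{E}[Z^{\beta,\omega}_N] = (\mathbb{E}\otimes P^S_0)[e^{\beta\sum_{k=1}^N\omega_{k,S_k}}]$ has finite exponential moments for every $\beta\in\mathbb{R}$ by (\ref{eqn: finit exponential moment of omega}) and Lemma \ref{lem: kappa-bound for a cone}, each $\lambda_N$ is convex and real-analytic on $[0,\infty)$. Differentiating under the expectation twice yields
\[
    \lambda_N'(\beta) = \frac{1}{N}\sum_{k=1}^N\mathbb{E}^\beta_N[\omega_{k,S_k}],\qquad \lambda_N''(\beta) = \frac{1}{N}\mathrm{Var}^\beta_N\Big(\sum_{k=1}^N\omega_{k,S_k}\Big),
\]
where $\mathbb{E}^\beta_N$ is the tilted annealed measure with density $e^{\beta\sum_{k=1}^N\omega_{k,S_k}}/\mathbb{E}[Z^{\beta,\omega}_N]$ against $\mathbb{E}\otimes P^S_0$. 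This identifies the candidate derivative and reduces the problem to showing that $(\lambda_N')_{N\geq1}$ converges in a sufficiently strong sense.

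Next, I would adapt the bounds (\ref{eqn: L2 one})--(\ref{eqn: L2 two}) from Lemma \ref{lem: L2 space for processes L and W}, combined with the $\kappa$-decoupling in Lemma \ref{lem: kappa-bound for a cone}, to derive a bound on $|\lambda_N'(\beta)|+|\lambda_N''(\beta)|$ on every compact $[0,T]\subset[0,\infty)$ that is uniform in $N$. Diagonal variances are immediately controlled by (\ref{eqn: finit exponential moment of omega}); for the cross-covariances $\mathrm{Cov}^\beta_N(\omega_{k,S_k},\omega_{j,S_j})$, the decay supplied by the mixing conditions $(\textbf{TC})_{C,g}$ or $(\textbf{TCG})_{C,g}$ together with the space-time cone estimates (\ref{eqn: estimate series1, SM})--(\ref{eqn: estimate series2, SMG}) and the series bounds (\ref{eqn: ak}), (\ref{eqn: dog a4}) in Appendix \ref{appendix b} force the off-diagonal contribution to be summable in absolute value. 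This gives $\mathrm{Var}^\beta_N(\sum_k\omega_{k,S_k})=O(N)$, hence a uniform bound on $\lambda_N''$ on compacts, hence uniform equicontinuity of $(\lambda_N')_{N\geq1}$.

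Finally, with $\lambda_N\to\lambda$ pointwise and $(\lambda_N')_{N\geq1}$ equi-Lipschitz on each compact $[0,T]$, Arzelà--Ascoli together with the classical fact that derivatives of pointwise-converging convex functions converge at every point of differentiability of the limit (and convex functions on $\mathbb{R}$ are differentiable off an at most countable set) upgrades the convergence to uniform convergence $\lambda_N'\to\ell$ on compacts for some non-decreasing continuous $\ell$. The two-sided convexity sandwich
\[
    \ell(\beta+\varepsilon) \;\geq\; \frac{\lambda(\beta+\varepsilon)-\lambda(\beta)}{\varepsilon} \;\geq\; \ell(\beta) \;\geq\; \frac{\lambda(\beta)-\lambda(\beta-\varepsilon)}{\varepsilon} \;\geq\; \ell(\beta-\varepsilon),
\]
obtained by passing $N\to\infty$ in the elementary inequalities for the convex $\lambda_N$, combined with continuity of $\ell$ at $\beta$, then yields $\lambda_+'(\beta)=\lambda_-'(\beta)=\ell(\beta)$ upon letting $\varepsilon\downarrow 0$ in each direction. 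This is precisely Gâteaux (equivalently Fréchet) differentiability on $[0,\infty)$.

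The hard part will be the uniform $O(N)$-variance estimate underlying the equi-Lipschitz property of $(\lambda_N')$: while the individual variances and small-lag covariances are tamed by (\ref{eqn: finit exponential moment of omega}) and Lemma \ref{lem: kappa-bound for a cone}, accumulating the mixing decay over all $\binom{N}{2}$ pairs $(k,j)$ requires careful use of the same space-time cone and regeneration structure as in Lemma \ref{lem: tau mixing inequality}, with the tilting by $\mathbb{E}^\beta_N$ also needing to be compatible with the mixing bounds uniformly in $\beta\in[0,T]$.
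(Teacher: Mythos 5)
Your proposal follows essentially the same strategy as the paper's proof: compute
\[
    \lambda_N''(\beta) = \frac{1}{N}\,\mathrm{Var}^\beta_N\Big(\sum_{k=1}^N\omega_{k,S_k}\Big),
\]
decompose the variance into diagonal and cross terms, use the $\kappa$-decoupling of Lemma \ref{lem: kappa-bound for a cone} plus the exponential mixing in $(\textbf{TC})_{C,g}$/$(\textbf{TCG})_{C,g}$ to show the covariance $\mathrm{Cov}^\beta_N(\omega_{i,S_i},\omega_{j,S_j})$ decays like $\exp(e^{-gt|i-j|})-1$ in the time lag, sum over pairs to obtain $\lambda_N'' = O(1)$ uniformly in $N$ on compacts, and conclude equicontinuity of $(\lambda_N')_N$. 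The paper does exactly this in Step~I of the proof of Lemma~\ref{lem: annealed free energy is differentiable}, introducing tilted pair measures $E^{i,j}_N$ and correction factors $\mu_{i,j},\nu_{i,j}\in[\kappa^{-1},\kappa]$ to isolate the pairwise correlation.

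Where you diverge is in the last step. Having established equicontinuity, the paper invokes Zagrodny's result on weak-$*$ convergence of subdifferentials of convex functions to produce sequences $\beta_N\to\beta$ and $\beta_N'\to\beta$ with $\lambda_N'(\beta_N)\to\lambda_-'(\beta)$ and $\lambda_N'(\beta_N')\to\lambda_+'(\beta)$, and then uses equicontinuity to force $\lambda_-'(\beta)=\lambda_+'(\beta)$. You instead extract a uniformly convergent subsequence by Arzelà--Ascoli, pass the elementary three-slope inequalities for the convex $\lambda_N$ to the limit to get the two-sided sandwich, and let $\varepsilon\downarrow 0$ using continuity of the subsequential limit $\ell$ to obtain $\lambda_-'(\beta)=\ell(\beta)=\lambda_+'(\beta)$ directly. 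This is a valid and arguably more elementary closing argument (it avoids the external citation and in fact the sandwich alone identifies $\ell$ with $\lambda'$, so subsequential uniqueness comes for free); the parenthetical appeal to the countable-non-differentiability set of convex functions is actually not needed once you have the sandwich.

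One small sourcing slip: for the off-diagonal covariance control you point to the series bounds (\ref{eqn: ak}) and (\ref{eqn: dog a4}), but those estimates concern the $L^2$-martingale problem of Appendix~\ref{appendix b} under $E_Q E^{S\otimes\widetilde S}_{0,0}$ and are not the estimates that bound $\mathrm{Cov}^\beta_N(\omega_{i,S_i},\omega_{j,S_j})$. What you actually want is a direct consequence of the mixing conditions applied at lag $|i-j|$ under the tilted measure (the paper's bound $|E^{i,j}_N[\omega_{i,S_i}\omega_{j,S_j}]-\hat{\mathbb{E}}[\omega_{1,0}]^2|\le \hat{\mathbb{E}}[\omega_{1,0}]^2(\exp(e^{-gt|i-j|})-1)$, and the analogous bound for the product of expectations). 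The idea you describe — exponential temporal decay of correlations makes the double sum grow like $N$ — is correct; you just need to derive the per-pair bound from $(\textbf{TC})_{C,g}$/$(\textbf{TCG})_{C,g}$ together with Lemma \ref{lem: kappa-bound for a cone}, rather than reuse the Appendix-B series, and also verify that the decay constant can be taken uniform for $\beta$ in a compact interval (which follows since $\hat{\mathbb{E}}[\omega_{1,0}]$ and the $\kappa$-factors depend continuously on $\beta$).
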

    \begin{proof}
        We divide the proof into several steps.\par\noindent
        \textbf{Step I.} Equi-continuity of first derivatives.\par\noindent
        For each $N\in\mathbb{N}$ and $\beta\geq0$, we denote $\lambda_N(\beta)=\frac{1}{N}\log\mathbb{E}[Z^{\beta,\omega}_N]$. And in \textit{Step I.} of the proof of Theorem \ref{thm: quenched and annealed free energies exist} we have shown that $\lambda_N(\beta)\to\lambda(\beta)$ as $N\to\infty$ for each $\beta\geq0$, via an large deviation argument. It is easily observed that each $\lambda_N(\vdot)\in C^\infty(\mathbb{R})$ because of (\ref{eqn: finit exponential moment of omega}). Hence, we can write
        \[
            \lambda^\prime_N(\beta)=\frac{1}{N}\frac{\mathbb{E}E^S_0[(\sum_{k=1}^N\omega_{k,S_k})I_N]}{\mathbb{E}[Z^{\beta,\omega}_N]},\quad\lambda^{\prime\prime}_N(\beta)=\frac{1}{N}\bigg(\frac{\mathbb{E}E^S_0[(\sum_{k=1}^N\omega_{k,S_k})^2I_N] }{\mathbb{E}[Z^{\beta,\omega}_N]} - \frac{\mathbb{E}E^S_0[(\sum_{k=1}^N\omega_{k,S_k})I_N]^2}{\mathbb{E}[Z^{\beta,\omega}_N]^2} \bigg)
        \]
        for all $\beta\geq0$, where we adopt the following convention to ease notation,
        \[
            I_N\coloneqq e^{\beta\sum_{k=1}^N\omega_{k,S_k}},\quad I_{i,j}\coloneqq e^{\beta(\omega_{i,S_i}+\omega_{j,S_j})},\quad\text{and}\quad I_N^{i,j}\coloneqq e^{\beta\sum_{k\neq i,j}\omega_{k,S_k}},\qquad\forall~1\leq i,j\leq N.
        \]
        Therefore, we can express $\lambda_N^{\prime\prime}$ as 
        \begin{equation*}\begin{aligned}
            \lambda^{\prime\prime}_N(\beta) &= \sum_{i,j=1}^N \frac{1}{N}\bigg(\frac{\mathbb{E}E^S_0[\omega_{i,S_i}\omega_{j,S_j}I_N]\mathbb{E}E^S_0[I_N]}{\mathbb{E}[Z^{\beta,\omega}_N]^2} - \frac{\mathbb{E}E^S_0[\omega_{i,S_i}I_N]\mathbb{E}E^S_0[\omega_{j,S_j} I_N]}{\mathbb{E}[Z^{\beta,\omega}_N]^2} \bigg)\\
            &= \sum_{i,j=1}^N \mu_{i,j}^2 \frac{1}{N} \frac{ \mathbb{E}E^S_0\otimes \Tilde{\mathbb{E}}E^{\Tilde{S}}_0 [(\omega_{i,S_i}\omega_{j,S_j}I_N)\Tilde{I}_N -(\omega_{i,S_i}I_N)\Tilde{\omega}_{j,S_j}\Tilde{I}_N ] }{ \mathbb{E}E^S_0[I_{i,j}]^2 \mathbb{E}E^S_0[I_N^{i,j}]^2 }.
        \end{aligned}\end{equation*}
        Here we use $\Tilde{\omega}$ as an independent copy of $\omega$ under law $\Tilde{\mathbb{E}}$, and $(\Tilde{S}_n)_{n\geq1}$ under law $P^{\Tilde{S}}_0$ of $(S_n)_{n\geq1}$. And $\Tilde{I}_N$ is the respective counterpart of $I_N$ using $\Tilde{\omega}$ and $(\Tilde{S}_n)_{n\geq1}$. The factor $\mu_{i,j}$ comes from a correlation in the denominator, and has the estimate $1/\kappa\leq\mu_{i,j}\leq\kappa$ by Lemma \ref{lem: kappa-bound for a cone}. Hence,
        \[
            \lambda^{\prime\prime}_N(\beta) = \sum_{i,j=1}^N \mu_{i,j}^2\nu_{i,j}^2\frac{1}{N} \frac{ \mathbb{E}E^S_0\otimes \Tilde{\mathbb{E}}E^{\Tilde{S}}_0 [(\omega_{i,S_i}\omega_{j,S_j}I_{i,j})\Tilde{I}_{i,j} -(\omega_{i,S_i}I_{i,j})\Tilde{\omega}_{j,S_j}\Tilde{I}_{i,j}]\vdot \mathbb{E}E^S_0[I_N^{i,j}]^2  }{\mathbb{E}E^S_0[I_{i,j}]^2\vdot \mathbb{E}E^S_0[I_N^{i,j}]^2},
        \]
        where $\nu_{i,j}$ comes from a correlation in the nominator and has the estimate $1/\kappa\leq\nu_{i,j}\leq\kappa$ by Lemma \ref{lem: kappa-bound for a cone}. Let $P^{i,j}_N$ be the law absolutely continuous with respect to $\mathbb{E}P^S_0$ with Radon--Nikodým derivative
        \[
            \frac{dP^{i,j}_N}{d\mathbb{E}P^S_0} = \frac{I_{i,j}}{\mathbb{E}E^S_0[I_{i,j}]},\qquad\forall~1\leq i,j\leq N.
        \]
        We also denote $E^{i,j}_N\coloneqq E_{P^{i,j}_N}$. Then,
        \begin{equation}\label{eqn: b7}
            \lambda_N^{\prime\prime}(\beta) = \sum_{i,j=1}^N \mu_{i,j}^2\nu_{i,j}^2\frac{1}{N} \big( E^{i,j}_N[\omega_{i,S_i}\omega_{j,S_j}] -E^{i,j}_N[\omega_{i,S_i}] E^{i,j}_N[\omega_{j,S_j}]  \big).
        \end{equation}
        Hence, by the time-correlation $(\textbf{TC})_{C,g}$ or $(\textbf{TCG})_{C,g}$, and following similar steps as Lemma \ref{lem: kappa-bound for a cone}, we get
        \[
            \big| E^{i,j}_N[\omega_{i,S_i}\omega_{j,S_j}] - \hat{\mathbb{E}}[\omega_{1,0}]^2 \big| \leq \hat{\mathbb{E}}[\omega_{1,0}]^2\big(\text{exp}(e^{-gt\abs{i-j}})-1\big)
        \]
        as well as
        \[
            \big| E^{i,j}_N[\omega_{i,S_i}] E^{i,j}_N[\omega_{j,S_j}] - \hat{\mathbb{E}}[\omega_{1,0}]^2 \big| \leq \hat{\mathbb{E}}[\omega_{1,0}]^2\big(\text{exp}(e^{-gt\abs{i-j}})-1\big),
        \]
        for some constant $0<t<1$ and for the law $\hat{\mathbb{P}}$ defined by $d\hat{\mathbb{P}}/d\mathbb{P} = \frac{1}{\mathbb{E}[e^{\beta\omega_{1,0}}]}e^{\beta\omega_{1,0}}$. Therefore, by (\ref{eqn: b7}),
        \begin{equation*}\begin{aligned}
            \abs{\lambda^{\prime\prime}_N(\beta)} &\leq C \hat{\mathbb{E}}[\omega_{1,0}]^2 \frac{1}{N} \sum_{i,j=1}^N \big(\text{exp}(e^{-gt\abs{i-j}})-1\big) \leq C^\prime \hat{\mathbb{E}}[\omega_{1,0}]^2 \frac{1}{N} \sum_{l=1}^N \sum_{\abs{i-j}=l} e^{-gtl}\\
            &\leq C^{\prime\prime} \hat{\mathbb{E}}[\omega_{1,0}]^2 \frac{1}{N} \sum_{l=1}^N 2Ne^{-gtl} \leq K(\beta),\qquad\forall~\beta\geq0\quad\text{and}\quad N\geq1.
        \end{aligned}\end{equation*}
        Henceforth, we know the first derivatives $\lambda^\prime_N(\beta)$ are equi-continuous at each $\beta\geq0$.\par\noindent
        \textbf{Step II.} Proof of differentiability.\par\noindent
        In light of \cite[Theorem 2.1]{Zagrodny}, for any fixed $\beta\geq0$ we can find two sequence $\{\beta_N\}_{N\geq1}$, $\{\beta^\prime_N\}_{N\geq1}$ such that $\beta_N\to\beta$ and $\beta^\prime_N\to\beta$ as $N\to\infty$. Moreover, $\lambda^\prime_N(\beta_N)\to\lambda^\prime_-(\beta)$ and $\lambda^\prime_N(\beta_N)\to\lambda^\prime_+(\beta)$ as $N\to\infty$. Hence,
        \[
            \lambda^\prime_-(\beta)-\lambda^\prime_+(\beta) = \lim_{N\to\infty} \lambda^\prime_N(\beta_N) -  \lambda^\prime_N(\beta^\prime_N).
        \]
        Because of the equi-continuity of $\{\lambda^\prime_N\}_{N\geq1}$, for any $\epsilon>0$ there exists $\delta>0$ such that whenever $\abs{\beta^\prime-\beta}<\delta$ for the fixed $\beta$, we have $\abs{\lambda^\prime_N(\beta^\prime) -  \lambda^\prime_N(\beta)}<\epsilon/2$ for all $N\geq1$. Therefore, we choose $N_0\in\mathbb{N}$ such that $\abs{\beta_N-\beta}<\delta$ and $\abs{\beta^\prime_N-\beta}<\delta$ for all $N\geq N_0$. Thus, 
        \[
             \abs{\lambda^\prime_-(\beta)-\lambda^\prime_+(\beta)} \leq \varlimsup_{N\to\infty} \abs{\lambda^\prime_N(\beta_N) -  \lambda^\prime_N(\beta^\prime_N)}\leq\epsilon.
        \]
        Letting $\epsilon\to0$, we have $\lambda^\prime_-(\beta)=\lambda^\prime_+(\beta)$ for all $\beta\geq0$, yielding the Gâteaux differentiability for $\lambda(\vdot)$.\par\noindent
        \textbf{Step III.} Pointwise convergence.\par\noindent
        Since $\lambda(\vdot)$ is convex differentiable on $[0,\infty)$, at each $\beta\geq0$, for any $\epsilon>0$ there exists $h>0$ such that
        \[
            \lambda^\prime(\beta-\epsilon)<\frac{\lambda(\beta)-\lambda(\beta-h)}{h}\leq\frac{\lambda(\beta+h)-\lambda(\beta)}{h}<\lambda^\prime(\beta)+\epsilon.
        \]
        Then, there exists $N_0\in\mathbb{N}$ such that for any $N\geq N_0$,
        \[
            \lambda^\prime(\beta-\epsilon)<\frac{\lambda_N(\beta)-\lambda_N(\beta-h)}{h}\leq \lambda^\prime_N(\beta) \leq\frac{\lambda_N(\beta+h)-\lambda_N(\beta)}{h}<\lambda^\prime(\beta)+\epsilon.
        \]
        And this yields
        \begin{equation}\label{eqn: pointwise convergence of derivative of lambda}
            \lambda^\prime_N(\beta)\xrightarrow[]{N\to\infty}\lambda^\prime(\beta),\qquad\forall~\beta\geq0,
        \end{equation}
        verifying the claim.
    \end{proof}

    Knowing that the limiting annealed free energy is differentiable also allows us to conclude that the large deviation rate function $I(\vdot)$ mentioned at the end of Section \ref{sec: regularity} is strictly convex, see \cite[Theorem 26.3]{Rockafellar}. When the underlying field $\omega$ is i.i.d. in both space and time, this fact follows immediately from \cite[Theorem 2]{Vysotsky} and the simple expression of $\lambda(\vdot)$. But when there is time-correlation, the subtleties of correlated structure have to be taken carefully. We also encourage readers to \cite{Stromberg} for more internal convexity properties of the free energy $\lambda$.\par
    We also need to specify how exactly the process $(\mathcal{H}_{n,\beta})_{n\geq1}$ is constructed at the beginning of Section \ref{sec: localization}. Indeed, writing each $\tau_n=\tau^{(L)}_n$, we recall that it has been defined $\mathcal{L}_{n,\beta}(\omega,\epsilon)=\frac{Z^{1,\xi(l)}_{\tau_n}}{\mathbb{E}E_Q[Z^{1,\xi(l)}_{\tau_n}]}$ for each $n\geq1$. And to cancel the time-correlation effect from distant $\omega$-coordinates, we whence define
    \begin{equation}\label{eqn: definition of H}    
        \mathcal{H}_{n,\beta}(\omega,\epsilon)\coloneqq\frac{E^S_0[\prod_{j=0}^{n-1} e^{\Delta_{j,n}}e^{\sum_{k=\tau_{j}+1}^{\tau_{j+1}} \xi^l_{k,S_k} } ]}{\mathbb{E}\overline{E}^S_0[\prod_{j=0}^{n-1} e^{\Delta_{j,n}}e^{\sum_{k=\tau_{j}+1}^{\tau_{j+1}} \xi^l_{k,S_k} }]},\qquad\forall~n\geq1,
    \end{equation}
    with each $\Delta_{j,n}\coloneqq\log \mathbb{E}\overline{E}^S_0[e^{\sum_{k=\tau_j+1}^{\tau_{j+1}} \xi^l_{k,S_k} } |\mathscr{G}_{j} ] - \log \mathbb{E}\overline{E}^S_0[e^{\sum_{k=\tau_j+1}^{\tau_{j+1}} \xi^l_{k,S_k} }  ] $.
    \begin{lemma}\label{lem: mixing inequality for martingale H}
        \normalfont
        For each $\beta>0$ and $n\geq1$, we have
        \[
            C\exp(-n e^{-gtL}) \mathcal{H}_{n,\beta}(\omega,\epsilon)\leq\mathcal{L}_{n,\beta}(\omega,\epsilon)\leq C^\prime\exp(n e^{-gtL}) \mathcal{H}_{n,\beta}(\omega,\epsilon),\qquad\mathbb{P}\otimes Q\text{-a.s.,}
        \]
        with constants $C,C^\prime>0$.
    \end{lemma}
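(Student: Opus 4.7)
The plan is to compare $\mathcal{L}_{n,\beta}$ and $\mathcal{H}_{n,\beta}$ directly from their definitions, by extracting the $\Delta_{j,n}$ factors and applying Lemma \ref{lem: tau mixing inequality} to each one. Writing $Y_j\coloneqq e^{\sum_{k=\tau_j+1}^{\tau_{j+1}}\xi^l_{k,S_k}}$ and using that $\Delta_{j,n}$ is $\mathscr{G}_j$-measurable and therefore does not depend on the walk $S$, the factor $\prod_{j=0}^{n-1} e^{\Delta_{j,n}}$ pulls out of the $E^S_0$-expectation in the numerator of $\mathcal{H}_{n,\beta}$. This yields the clean identity
\[
\frac{\mathcal{L}_{n,\beta}}{\mathcal{H}_{n,\beta}} \;=\; \frac{\mathbb{E}\overline{E}^S_0\bigl[\prod_{j=0}^{n-1} e^{\Delta_{j,n}}\,Y_0\cdots Y_{n-1}\bigr]}{\prod_{j=0}^{n-1} e^{\Delta_{j,n}}\cdot \mathbb{E}\overline{E}^S_0\bigl[Y_0\cdots Y_{n-1}\bigr]},\qquad\mathbb{P}\otimes Q\text{-a.s.}
\]

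Next I bound each $e^{\Delta_{j,n}}$ pointwise. Taking $f(S^{\tau,1}_\vdot)$ to be the terminal-value-then-exponential functional in Lemma \ref{lem: tau mixing inequality}, and using translation invariance of $\mathbb{P}$ together with the i.i.d.\ structure of $\epsilon$ to identify $\mathbb{E}\overline{E}^S_0[Y_j]=\mathbb{E}\overline{E}^S_0[Y_0]=\psi(\beta)$ uniformly in $j\geq0$, the Lemma gives
\[
\exp\bigl(-e^{-g\hat t L}\bigr) \;\leq\; e^{\Delta_{j,n}} \;\leq\; \exp\bigl(e^{-g\hat t L}\bigr),\qquad\mathbb{P}\otimes Q\text{-a.s.,}
\]
for an auxiliary $\hat t\in(t,1)$ and all $L\geq L_0(\hat t)$. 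Multiplying the $n$ bounds gives $\prod_{j} e^{\Delta_{j,n}}\in[e^{-ne^{-g\hat tL}},e^{ne^{-g\hat tL}}]$ a.s.; since these deterministic-a.s.\ brackets may be integrated against the positive factor $Y_0\cdots Y_{n-1}$, the same two-sided bracket governs the ratio $\mathbb{E}\overline{E}^S_0[\prod_j e^{\Delta_{j,n}}Y_j]/\mathbb{E}\overline{E}^S_0[\prod_j Y_j]$ appearing in the numerator of the displayed expression above.

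Combining the two contributions yields $\mathcal{L}_{n,\beta}/\mathcal{H}_{n,\beta}\in[\exp(-2ne^{-g\hat tL}),\exp(2ne^{-g\hat tL})]$ almost surely. Choosing $L_0$ large enough so that $2e^{-g\hat tL}\leq e^{-gtL}$ whenever $L\geq L_0$, equivalently $L_0\geq(\log 2)/[g(\hat t-t)]$, absorbs the factor $2$ back into the parameter $t$ appearing in the stated conclusion; any residual slack from finitely many early indices is absorbed into positive constants $C,C^\prime$ independent of $n$ and $L$.

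The main obstacle is not the estimates themselves but rather the two measurability and invariance checks that make the entire algebra work: first, confirming that $\Delta_{j,n}$ genuinely does not depend on $S$, so that it can be pulled outside $E^S_0$ to produce the ratio identity above; and second, confirming that $\mathbb{E}\overline{E}^S_0[Y_j]=\psi(\beta)$ uniformly in $j$, so that the unconditional expectation inside the definition of $\Delta_{j,n}$ matches the $j=0$ reference quantity $\mathbb{E}\overline{E}^S_0[f(S^{\tau,1}_\vdot)]$ on the right-hand side of Lemma \ref{lem: tau mixing inequality}. With these two points in hand the remainder is a short algebraic manipulation.
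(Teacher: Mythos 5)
Your argument follows exactly the same approach as the paper's one-line proof: both rest on the observation that $\Delta_{j,n}$ is $\mathscr{G}_j$-measurable (and equal to zero for $j=0$), so by Lemma~\ref{lem: tau mixing inequality} each factor $e^{\Delta_{j,n}}$ is bracketed between $\exp(\pm e^{-gtL})$ $\mathbb{P}\otimes Q$-a.s., and one then accumulates the $n$ brackets. The paper's proof stops at ``which iteratively verifies the assertion,'' so you have correctly filled in the missing bookkeeping: since $\Delta_{j,n}$ is $S$-independent it factors out of the $E^S_0$-integral in the numerator of $\mathcal{H}_{n,\beta}$ but remains inside the $\mathbb{E}\overline{E}^S_0$ integral in the denominator, which produces the clean ratio identity you display and, upon estimating both occurrences of $\prod_j e^{\Delta_{j,n}}$, a bracket $\exp(\pm 2n e^{-gtL})$ rather than $\exp(\pm n e^{-gtL})$. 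Your repair---running Lemma~\ref{lem: tau mixing inequality} at an auxiliary parameter $\hat t\in(t,1)$ and enlarging $L_0$ so that $2e^{-g\hat tL}\le e^{-gtL}$---is the right fix, and the verification that the unconditional normalizer in $\Delta_{j,n}$ equals $\psi(\beta)$ uniformly in $j$ (by the product structure of $Q$ and translation invariance of $\mathbb{P}$) is precisely the point that justifies invoking the lemma in the first place. The only cosmetic slack is your closing remark about ``finitely many early indices''; nothing there is actually needed, since $\Delta_{0,n}=0$ and all $j\ge 1$ are handled identically, so the constants $C,C'$ may in fact be taken equal to one once $L\ge L_0$.
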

    \begin{proof}
        It is obvious that $\Delta_{1,n}=0$, implying $\mathcal{H}_{1,\beta}=\mathcal{L}_{1,\beta}$. Moreover, for each $1<j\leq n$,
        \[
            \exp(-e^{-gtL})\leq\exp(\Delta_{j,n})=\frac{\mathbb{E}\overline{E}^S_0[e^{\sum_{k=\tau_j+1}^{\tau_{j+1}} \xi^l_{k,S_k} } |\mathscr{G}_{j} ]}{\mathbb{E}\overline{E}^S_0[e^{\sum_{k=\tau_j+1}^{\tau_{j+1}} \xi^l_{k,S_k} } ]}\leq \exp(e^{-gtL}),
        \]
        by Lemma \ref{lem: tau mixing inequality}, which iteratively verifies the assertion.
    \end{proof}
    Using the (relatively) informal language the the beginning of Section \ref{sec: localization}, it is equivalent to write $\mathcal{H}_{n,\beta}=\mathcal{L}_{n,\beta}(\omega^*,\epsilon)$, where $\xi^{l,*}_{k,S_k}=(\tau_{j+1}-\tau_j)^{-1}\Delta_{j,n}+\xi^l_{k,S_k}$ for all $\tau^{(L)}_{j}+1\leq k\leq\tau^{(L)}_{j+1}$.\par
    Remember that in Section \ref{sec: localization} we have defined the truncated free energies $\rho_l(\beta)=\lim_{N\to\infty} N^{-1}\log Z^{\beta,\omega(l)}_N$ and $\lambda_l(\beta)=\lim_{N\to\infty} N^{-1}\log \mathbb{E}[Z^{\beta,\omega(l)}_N]$ for each index $l\geq1$. The following lemma gives their respective pointwise limits.

    \begin{lemma}\label{lem: rho_l to rho, lambda_l to lambda}
        \normalfont
        Under the time-correlations $(\textbf{TC})_{C,g}$ or $(\textbf{TCG})_{C,g}$, for any $\beta\geq0$ we have
        \[
            \lim_{l\to\infty}\rho_l(\beta)=\rho(\beta) \qquad \text{and} \qquad \lim_{l\to\infty}\lambda_l(\beta)=\lambda(\beta).
        \]
    \end{lemma}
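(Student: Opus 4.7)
The plan is to exploit the monotone decomposition $\omega(l)=\omega+\delta^l$ where $\delta^l_{n,z}\coloneqq(-l-\omega_{n,z})_+\geq0$ satisfies $\delta^l_{n,z}\searrow0$ as $l\to\infty$. Since $\omega(l)\geq\omega$ pointwise, one immediately has $Z^{\beta,\omega(l)}_N\geq Z^{\beta,\omega}_N$ and $\mathbb{E}[Z^{\beta,\omega(l)}_N]\geq\mathbb{E}[Z^{\beta,\omega}_N]$, yielding $\rho_l(\beta)\geq\rho(\beta)$ and $\lambda_l(\beta)\geq\lambda(\beta)$ for every $\beta\geq0$. It therefore suffices to establish matching upper bounds.

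For this I would apply Hölder's inequality with conjugate exponents $p,q>1$:
\[
Z^{\beta,\omega(l)}_N = E^S_0\bigl[e^{\beta\sum_{k=1}^N\omega_{k,S_k}}\vdot e^{\beta\sum_{k=1}^N\delta^l_{k,S_k}}\bigr] \leq \bigl(Z^{p\beta,\omega}_N\bigr)^{1/p}\bigl(E^S_0\bigl[e^{q\beta\sum_{k=1}^N\delta^l_{k,S_k}}\bigr]\bigr)^{1/q}.
\]
Taking logarithms, dividing by $N$, applying $\mathbb{P}$-expectation, and invoking Jensen's inequality (concavity of $\log$) on the second factor produces
\[
\tfrac{1}{N}\mathbb{E}\bigl[\log Z^{\beta,\omega(l)}_N\bigr] \leq \tfrac{1}{Np}\mathbb{E}\bigl[\log Z^{p\beta,\omega}_N\bigr] + \tfrac{1}{Nq}\log\mathbb{E}E^S_0\bigl[e^{q\beta\sum_{k=1}^N\delta^l_{k,S_k}}\bigr].
\]
The annealed factor on the right is controlled via the $\kappa$-correlation of Lemma \ref{lem: kappa-bound for a cone} together with translation invariance of $\omega$, giving the clean bound $\mathbb{E}E^S_0[e^{q\beta\sum_k\delta^l_{k,S_k}}]\leq\kappa^N\mathbb{E}[e^{q\beta\delta^l_{1,0}}]^N$. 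Because $0\leq\delta^l_{1,0}\leq(\omega_{1,0})_-$ and every exponential moment of $(\omega_{1,0})_-$ is finite by (\ref{eqn: finit exponential moment of omega}), dominated convergence yields $\mathbb{E}[e^{q\beta\delta^l_{1,0}}]\to1$ as $l\to\infty$ for each fixed $q$.

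Sending $N\to\infty$, and invoking the concentration inequality (\ref{eqn: a particular concentration inequality for log Z(beta,N)}) to identify $\rho(\vdot)$ with the limit of $N^{-1}\mathbb{E}[\log Z^{\vdot,\omega}_N]$, I arrive after taking $l\to\infty$ at $\varlimsup_{l\to\infty}\rho_l(\beta)\leq\tfrac{1}{p}\rho(p\beta)+\tfrac{\log\kappa}{q}$. A final pass $p\searrow1$, hence $q\to\infty$, combined with continuity of $\rho(\vdot)$ inherited from its convexity in $\beta$, closes the gap and yields $\lim_l\rho_l(\beta)=\rho(\beta)$. The argument for $\lambda_l(\beta)\to\lambda(\beta)$ is entirely parallel after replacing $\log Z$ by $\log\mathbb{E}[Z]$ throughout, and the concentration step becomes unnecessary. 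The only subtle point is the order of limits: $l\to\infty$ must precede $p\to1$ so that $\log\mathbb{E}[e^{q\beta\delta^l_{1,0}}]$ vanishes for each fixed $q$; swapping this order would require a uniform tail control in $q$ that the mere finiteness of exponential moments does not a priori supply.
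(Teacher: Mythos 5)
Your proof is correct in substance but takes a genuinely different route from the paper. The paper works directly with the pathwise difference $\sum_k\omega^l_{k,S_k}-\sum_k\omega_{k,S_k}=\sum_k\delta^l_{k,S_k}$ and shows, by decomposing into blocks of prime length $p$, applying the $\kappa$-correlation of Lemma \ref{lem: kappa-bound for a cone} on each block, and invoking Borel--Cantelli, that $\sup_{S\in\mathcal{S}_N}\sum_k\delta^l_{k,S_k}\leq\delta N$ eventually $\mathbb{P}$-a.s.\ for $l$ large; this yields $Z^{\beta,\omega(l)}_N\leq e^{\beta\delta N}Z^{\beta,\omega}_N$ pointwise, hence $\rho_l(\beta)\leq\rho(\beta)+\beta\delta$ with no appeal to continuity of $\rho$ in $\beta$. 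You instead separate the two exponentials via H\"older with conjugate exponents $p,q$, push the truncation error into an annealed one-site moment $\mathbb{E}[e^{q\beta\delta^l_{1,0}}]\to1$, and recover the sharp bound only after a second limit $p\searrow1$, which requires the continuity of $\rho$ (and $\lambda$) established in Theorem \ref{thm: quenched and annealed free energies exist}. What your route buys is uniformity: the same computation, with $\log Z$ replaced by $\log\mathbb{E}[Z]$, produces $\varlimsup_l\lambda_l(\beta)\leq p^{-1}\lambda(p\beta)+q^{-1}\log\kappa$ directly and unconditionally, whereas the paper's argument for $\lambda_l\to\lambda$ is terse and, as written, appears to lean on a weak-disorder identity $\rho(\beta)=\lambda(\beta)$ that only holds for small $\beta$; your version closes this cleanly for all $\beta\geq0$. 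Both approaches rely on the same tacit extension of Lemma \ref{lem: kappa-bound for a cone} from linear exponentials of $\omega$ to exponentials of the nonlinear functional $\delta^l=(-l-\omega)_+$, which is legitimate since the time-correlation conditions control Radon--Nikod\'ym derivatives of conditional laws and therefore bound $\mathbb{E}[f(\omega_{n,z})\,|\,\mathscr{F}_I]$ for arbitrary nonnegative Borel $f$, but it is worth stating. One further point to flag before you close the $\mathbb{P}$-expectation step: passing from $\lim_N N^{-1}\mathbb{E}[\log Z^{\beta,\omega(l)}_N]$ to $\rho_l(\beta)$ requires the concentration estimate (\ref{eqn: a particular concentration inequality for log Z(beta,N)}) to hold for the truncated field $\omega(l)$, not just $\omega$; this follows by the same martingale-difference argument of Step III of Theorem \ref{thm: quenched and annealed free energies exist} applied to $\omega(l)$ (which inherits the exponential moment bound), but it is an extra step your write-up elides — the paper's proof implicitly assumes the same.
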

    \begin{proof}
        For any $l\geq1$ and $N\geq1$, we have
        \[
            \sum_{k=1}^N \omega^l_{k,S_k} - \sum_{k=1}^N \omega_{k,S_k} = \sum_{k=1}^N \abs{\omega_{k,S_k}+l}\mathbbm{1}_{\{\omega_{k,S_k}+l\leq0\}}.
        \]
        And whence, for each $\delta>0$ and prime $p>1$,
        \begin{equation*}\begin{aligned}
            &\mathbb{P}\bigg( \sup_{S\in\mathcal{S}_N} \sum_{k=1}^N \abs{\omega_{k,S_k}+l}\mathbbm{1}_{\{\omega_{k,S_k}+l\leq0\}} \geq \delta N \bigg) \leq
            e^{-\delta N} \mathbb{E} \bigg[\sup_{S\in\mathcal{S}_N} e^{\sum_{k=1}^N \abs{\omega_{k,S_k}+l}\mathbbm{1}_{\{\omega_{k,S_k}+l\leq0\}}} \bigg]\\
            &\qquad\leq e^{-\delta N} \mathbb{E} \bigg[\sup_{S\in\mathcal{S}_N} e^{\sum_{k=1}^{N-p} \abs{\omega_{k,S_k}+l}\mathbbm{1}_{\{\omega_{k,S_k}+l\leq0\}}} \mathbb{E}\big[ e^{\sum_{k=N-p+1}^N \abs{\omega_{k,S_k}+l}\mathbbm{1}_{\{\omega_{k,S_k}+l\leq0\}}} \big|\mathscr{F}_{I^S_{N-p}}\big] \bigg]\\
            &\qquad\leq e^{-\delta N} \kappa^{N/p} \bigg(1+\sum_{k=1}^p\frac{p!}{(p-k)!k!}\sup_{S\in\mathcal{S}_p}\mathbb{E}\big[e^{\sum_{j=1}^k \abs{\omega_{j,S_j}+l} },\,\omega_{j,S_j}+l\leq0:\,j\leq k\big]\bigg)^{N/p},
        \end{aligned}\end{equation*}
        where $\mathscr{F}_{I^S_{N-p}}\coloneqq\sigma(\omega_{k,S_k}:\,k=1,\ldots,N-p)$ and the last inequality is due to Lemma \ref{lem: kappa-bound for a cone}. Recursively, we know the above express is no larger than
        \begin{equation*}\begin{aligned}
             &e^{-\delta N} \kappa^{N/p} \bigg(1+\sum_{k=1}^p\frac{p!}{(p-k)!k!} \sup_{S\in\mathcal{S}_{p-1}} \mathbb{E}\big[e^{4\sum_{j=1}^{k-1} |\omega_{j,S_j}| }\big]^{1/4}\mathbb{E}\big[e^{4|\omega_{1,0}|}\big]^{1/4}\mathbb{P}(\omega_{1,0}\leq-l)^{1/2}\bigg)^{N/p}\\
             &\qquad\leq e^{-\delta N}\kappa^{N/p}(1+C(p)e^{-l/2})^{N/p}
        \end{aligned}\end{equation*}
        for some constant $C(p)>0$ depending only on the prime $p$. Choose $p$ large enough with $p^{-1}\log \kappa<\delta$ and then choose $l>l_0$ for some $l_0=l_0(\delta)$. Then, via Borel--Cantelli lemma,
        there exists some $N_0(\omega)\in\mathbb{N}$, independent of $dP^S_0$, such that 
        \begin{equation}\label{eqn: dog b9}        
            \sup_{S\in\mathcal{S}_N}\sum_{k=1}^N \abs{\omega_{k,S_k}+l}\mathbbm{1}_{\{\omega_{k,S_k}+l\leq0\}} \leq \delta N,\qquad\forall~N\geq N_0,\qquad\mathbb{P}\text{-a.s.}
        \end{equation}
        Therefore, whenever $N\geq N_0(\omega)$ and $l>l_0(\delta)$,
        \[
            Z^{\beta,\omega(l)}_N = \sum_{S^\prime\in \mathcal{S}_N} P^S_0(S_j=S^\prime_j,\,j\leq N)e^{\beta\sum_{k=1}^N \omega^l_{k,S_k} } \leq e^{\delta N} Z^{\beta,\omega}_N = \sum_{S^\prime\in \mathcal{S}_N} P^S_0(S_j=S^\prime_j,\,j\leq N)e^{\beta\sum_{k=1}^N \delta + \omega_{k,S_k} }.
        \]
        Taking logarithm and then letting $N\to\infty$, $l\to\infty$, and $\delta\to0$, we get
        \[
            \lim_{N\to\infty}\frac{1}{N}\log  Z^{\beta,\omega}_N \leq \lim_{l\to\infty}\lim_{N\to\infty}\frac{1}{N}\log  Z^{\beta,\omega(l)}_N \leq \lim_{\delta\to0} \lim_{N\to\infty} \delta + \frac{1}{N}\log  Z^{\beta,\omega}_N,
        \]
        which obviously implies $\rho_l(\beta)\xrightarrow[]{l}\rho(\beta)$ for all $\beta\geq0$. On the other hand, when $\lambda(2\beta)-2\lambda(\beta)\leq...$, we know 
        \[
            \rho(\beta) \leq \lambda(\beta)\leq \lim_{l\to\infty}\lambda_l(\beta) = \rho(\beta).
        \]
        Hence, it is then obvious that $\lambda_l(\beta)\xrightarrow[]{l}\lambda(\beta)$ as well, verifying the assertion.
    \end{proof}

\bibliographystyle{plain}
\begin{spacing}{1}

\end{spacing}

\end{document}